\newcommand{\nonlocal}{\mathcal{M}}
\newcommand{\stp}{\mathcal{T}}
\newcommand{\E}[2][]{\ifthenelse{\isempty{#2}}{\mathbb{E}\left[#1\right]}{\mathbb{E}_{#1}\left[#2\right]}}
\newcommand{\dd}[1]{\mathrm{d}#1}
\newcommand{\CCal}{\mathcal{C}}
\newcommand{\fcls}[2][]{\ifthenelse{\isempty{#2}}{\mathcal{C}\left(#1\right)}{\mathcal{C}^{#1}\left(#2\right)}}
\newcommand{\pcls}[2][]{\ifthenelse{\isempty{#2}}{\mathcal{P}\left(#1\right)}{\mathcal{P}_{#1}\left(#2\right)}}
\newcommand{\lips}[1][]{\ifthenelse{\isempty{#1}}{\mathscr{L}}{\mathscr{L}_{#1}}}
\newcommand{\test}{\mathscr{F}}
\newcommand{\filt}{\mathbb{F}}
\newcommand{\FCal}{\mathcal{F}}
\newcommand{\pbm}{\mathbb{P}}
\newcommand{\LCal}{\mathcal{L}}
\newcommand{\ACal}{\mathcal{A}}
\newcommand{\kl}[2]{D_{{\rm KL}}\left(#1\|#2\right)}
\newcommand{\RCal}{\mathcal{R}}
\newcommand{\subG}[2][\kappa_1,\kappa_2]{\ifthenelse{\isempty{#2}}{{\rm subG}_x\left(#1\right)}{{\rm subG}_{#2}\left(#1\right)}}
\newcommand{\keywords}[1]{\par\noindent\textbf{Keywords:} #1}
\newcommand{\revise}[1]{\textcolor{black}{#1}}
\newcommand{\rerevise}[1]{\textcolor{black}{#1}}
\crefname{definition}{definition}{definitions}
\Crefname{definition}{Definition}{Definitions}
\title{A Two-fold Randomization Framework\\ for Impulse Control Problems \footnote{The work of the first and the third authors was partially funded by the DSAI and Department startup fund for early career faculty. The work of the second author was funded by National Natural Science Foundation of China
(No.12471425).}}
\author{Haoyang Cao\footnote{Department of Applied Mathematics, Data Science and AI Institute, Mathematical Institute for Data Science, Johns Hopkins University, Baltimore, MD, USA (hycao@jh.edu).}, Yuchao Dong\footnote{School of Mathematical Sciences, Tongji University, Shanghai, China (ycdong@tongji.edu.cn).},
Zhouhao Yang\footnote{Department of Applied Mathematics, Johns Hopkins University, Baltimore, MD, USA (zyang145@jh.edu).}
}
\date{}
\begin{document}

\maketitle

\begin{abstract}
We propose and analyze a randomization scheme for a general class of impulse control problems. The solution to this randomized problem is characterized as the fixed point of a compound operator which consists of a regularized nonlocal operator and a regularized stopping operator. This approach allows us to derive a semi-linear Hamilton-Jacobi-Bellman (HJB) equation. Through an equivalent randomization scheme with a compound Poisson  measure, we establish a verification theorem that implies the uniqueness of the solution. Via an iterative approach, we prove the existence of the solution. The existence--and--uniqueness result ensures the randomized problem is well-defined. We then demonstrate that our randomized impulse control problem converges to its classical counterpart as the randomization parameter \(\blambda\) vanishes. This convergence, combined with the value function's \(\mathcal{C}^{2,\alpha}_{loc}\) regularity, confirms our framework provides a robust approximation and a foundation for developing learning algorithms. Under this framework, we propose an offline reinforcement learning (RL) algorithm. Its policy improvement step is naturally derived from the iterative approach from the existence proof, which enjoys a geometric convergence rate. We implement a model-free version of the algorithm and numerically demonstrate its effectiveness using a widely-studied example. The results show that our RL algorithm can learn the randomized solution, which accurately approximates its classical counterpart. A sensitivity analysis with respect to the volatility parameter \(\sigma\) in the state process effectively demonstrates the exploration--exploitation tradeoff.
\end{abstract}
\keywords{impulse control, nonlocal operator, optimal stopping, entropy-regularized randomization, reinforcement learning}

\section{Introduction}
Impulse control models systems in which actions are infrequent but high-impact: a controller waits and then intervenes at selected times to shift the state by a chosen jump size. 
The policy usually takes the form of \(\upsilon=\{(\tau_n,\xi_n)\}_{n\ge1}\), where \(\{\tau_n\}_{n\geq1}\) denotes a sequence of stopping times indicating the timing of inventions, and \(\{\xi_n\}_{n\geq1}\) denotes the associated strategic jumps. Under a given policy \(\upsilon\), the controlled state dynamics is given by 
\[
\dd X_t=b(X_{t-})\dd t+\sigma(X_{t-})\dd W_t+\sum_{n\ge1}\xi_n\delta(t-\tau_n).
\]
The objective is to minimize the discounted total cost,
\[\psi(x)=\inf_{\upsilon}J(x;\upsilon)=\inf_\upsilon\EE\left[\int_0^\infty e^{-rt}f(X_{t-})\dd t+\sum_{n\ge1}e^{-r\tau_n}l(\xi_n)\Big|X_{0-}=x\right],\]
with a discount rate \(r>0\), a running cost \(f\), and an intervention cost \(l\). Compared to regular control, impulse control is a more versatile mathematical framework as it incorporates discontinuities into the state processes. It also generalizes optimal stopping or the singular control problems, as impulse interventions can incur a nonzero fixed cost \cite{HT1983}. As a result, various problems in mathematical finance, engineering and management can be modeled as impulse control problems, including cash management \cite{constantinides1978existence}, inventory controls \cite{Harrison1983, Sulem1986}, transaction cost in portfolio analysis \cite{Eastham1988, Morton1995, Korn1998, Korn1999, Bielecki2000, Oksendal2002}, insurance model \cite{Jeanblanc-Picque1995, Candenillas2006}, liquidity risk \cite{LyVath2007}, exchange rates \cite{jeanblanc1993impulse, Mundaca1998, Bertola2016}, real options \cite{Triantis1990, Mauer1994,BensoussanRoignant}, energy market \cite{basei2019optimal}, and so on. 

In contrast to the popularity as a modeling tool, impulse control problems face technical difficulties in deriving and analyzing the optimal strategies. In general, the analysis of impulse control problems is often through the associated Hamilton-Jacobi-Bellman (HJB) equation of the quasi-variational inequality (QVI) type,
\[
\min\left\{\frac{\sigma^2}{2}\psi''+b\psi'-r\psi+f, \nonlocal\psi-\psi\right\}=0,\]
where \(\nonlocal\) is a nonlocal operator such that
\(\nonlocal\psi(x)=\inf_{\xi\in\RR}\{\psi(x+\xi)+l(\xi)\}\); see, for instance, \cite{bensoussan1984impulse,oksendal2007applied} for more details on the QVIs. Due to the very presence of this nonlocal operator and correspondingly the discontinuous nature of the controlled process, it takes delicate treatments of the value function, especially its regularity conditions, so that popular methods such as the smooth-fit principle can be applied; see for instance \cite{guo2009smooth,davis2010impulse,bayraktar2013impulse}. One of the best known strategies would be the threshold-type \((d,D,U,u)\) derived for the one-dimensional cash management model in \cite{constantinides1978existence}: the controller intervenes only if the state process exits the continuation region \((d,u)\), and in this case, the state is immediately adjusted to the closer threshold between \(D\) and \(U\); see also \cite{alvarez2004class} for more explicitly solvable cases. It is particularly challenging to {explicitly} derive the value function and the corresponding optimal strategy under a generic impulse control framework. 

To {numerically} solve impulse control problems instead, several numerical methods for the corresponding HJB-QVIs have been developed. Many of these methods are based on iterative partial differential equation (PDE) frameworks. The most common route is through penalization combined with policy/Howard iteration: one solves a sequence of penalized HJB equations and iterates to a fixed point (see, e.g., \cite{bensoussan1984impulse,reisinger2019penalty,reisinger2020error}). Other widely used approaches include direct-control or semi-Lagrangian discretizations with policy iteration and matrix-monotonicity arguments \cite{azimzadeh2016weakly,azimzadeh2018convergence}, finite-difference (FD)/finite-element (FE) schemes that provide $L^\infty$-type error bounds for (non)coercive QVIs \cite{boulbrachene1998noncoercive,boulbrachene2005pointwise}, implicit time-stepping for finite-horizon formulations \cite{ieda2015implicit}, and probabilistic representations via backward stochastic differential equations (BSDEs) that enable Monte-Carlo solvers for certain classes \cite{kharroubi2010backward}. While effective for low dimensional state space, these methods face serious limitations in modern applications: grid-based PDE schemes suffer from the curse of dimensionality and delicate monotonicity/stencil requirements; penalty formulations introduce stiffness and parameter tuning (often degrading accuracy near free boundaries); FD/FE error analyses can hinge on regularity or coercivity not guaranteed in practice; and BSDE solvers may require nested simulation and restrictive jump structures. 

As an alternative way to bypass the aforementioned obstacles in traditional numerical methods, reinforcement learning (RL) has been widely studied in recent years to solve optimal stochastic control problems under the continuous setting. The essence of RL is to learn about the possibly unknown environment from either the historical trajectory data (offline) or the interactions (online), and then to solve a decision making problem according to some objective function. Therefore, it is crucial to balance between learning and decision-making, also known as the ``exploration--exploitation'' tradeoff. To encourage exploration, a randomized optimal control framework is necessary. For regular control problems, a common randomization scheme is through an entropy regularization over the policy; see, for instance, \cite{wang2020reinforcement}. Due to the newly-introduced Shannon's entropy term, the controller is encouraged to {explore} the entire trajectory space instead of only {exploiting} the optimal one under the deterministic setting. In \cite{wang2020reinforcement}, a thorough analysis is conducted on a linear-quadratic problem, where the exploration--exploitation tradeoff is full characterized through the optimal randomized feedback strategy as a Gaussian distribution. The analysis of the entropy-regularized framework under a general setting is conducted in \cite{tang2022exploratory}. Under this framework, {model-free} updates have been developed in continuous-time temporal-difference learning and $q$-learning \cite{jia2022policy,jia2023q}, where the control randomization provides a policy-gradient estimator and has a variance reduction effect in the meantime. 

To design learning algorithms for optimal stopping problems, a proper randomized framework essentially increases the frequency of receiving the otherwise extremely sparse reward signals associated with the stopping. One way to achieve that is to randomize the survival process \(\{\mathbbm{1}\{\tau>t\}\}_{t\geq0}\) for any given stopping time \(\tau\). For instance, in \cite{dong2024randomized}, the survival process is modeled as \(\{e^{-\int_0^t\pi_sds}\}_{t\geq0}\) that is controlled by the nonnegative survival rate \(\pi=\{\pi_t\}_{t\geq0}\). With a running normalized entropy regularization \(\cR(\pi_t)\), the randomized optimal stopping problem is solved via analyzing the associated semi-linear HJB equation. A corresponding offline RL algorithm is proposed.
In \cite{dianetti2024exploratory}, the survival process is modeled as \(\{1-\xi_t\}_{t\geq0}\) where \(\xi=\{\xi_t\}_{t\geq0}\) is a nondecreasing singular control process valued on \([0,1]\). With a cumulative relative entropy regularization on \(\xi\), the randomized optimal stopping problem is formulated as a singular control problem whose state space is augmented with the current survival probability \(y\in[0,1]\); its solution can be derived through its obstacle-type HJB equation. Correspondingly, two versions of RL algorithms, one model-based and the other model-free, are proposed to learn the decision boundary characterizing the optimal bang-bang control. In both \cite{dong2024randomized} and \cite{dianetti2024exploratory}, the respective randomized optimal stopping frameworks via the survival process are shown to converge to their classical counterparts as the parameter \(\lambda\) modulating the randomization decreases to \(0\).

Extending these randomization ideas to impulse control problems is nontrivial, as control policy concerns with not just one but a sequence of stopping times together with the associated jumps. In \cite{denkert2025control}, a special case of impulse control problem is considered where the policy \(\upsilon\) is modeled as a marked point process with an associated (uncontrolled) Poisson random measure; see \cite{kharroubi2010backward} for more details. Instead of randomizing the survival processes for each stopping time, the randomization in \cite{denkert2025control} is through a controlled intensity process that tilts the probability measure. Under the intensity policy, the policy gradient is derived and a corresponding actor-critic RL algorithm is then proposed. 

\subsection*{Our Work}
In this work, we seek  a randomization scheme for a general class of impulse control problems whose control policies are beyond the form of marked point processes. 
The idea is inspired by the fixed-point characterization of the impulse control value function \(\psi\) under the operator \(\stp=T\circ\nonlocal\), such that for any proper test function \(\phi\), 
\[\stp\phi(x)=T[\nonlocal\phi](x)=\inf_{\tau}\E{\int_0^\tau e^{-rt}f(Y_{t-})dt+e^{-r\tau}\nonlocal\phi(Y_{\tau-})\biggl|Y_{0-}=x},\ \ \forall x\in\RR,\]
subject to \(dY_t=b(Y_t)dt+\sigma(Y_t)dW_t\); see \cite{bensoussan1984impulse,guo2009smooth}.
We propose a two-fold randomization scheme for impulse control problem. First, we randomize the nonlocal operator into \(\nonlocal^\lambda\) for some \(\lambda>0\) such that for any suitable test function \(\phi\), 
\[\nonlocal^\lambda\phi(x)=\inf_{\mu,\mu\ll\Phi_x}\E[\xi\sim\mu]{\phi(x+\xi)+l(\xi)}+\lambda\kl{\mu}{\Phi_x},\ \ \forall x\in\RR,\]
where \(\Phi_x\) denotes the Gaussian distribution \(\cN(-x,1)\), and \(\kl{\mu}{\Phi_x}\) denotes the Kullback-Leibler divergence between \(\mu\) and \(\Phi_x\); here, \(\kl{\mu}{\Phi_x}\) effectively acts as the entropy-regularizer that encourages the exploration of jumps.
Then, we randomize the optimal stopping operator into \(T^{\lambda}\) for some \(\lambda>0\), such that for any suitable test function \(\phi\), 
\[T^{\lambda}\phi(x)=\inf_{\pi_t\geq0}\E{\int_0^\infty e^{-\int_0^tr+\pi_sds}(f(Y_{t})-\lambda\cR(\pi_t)+\pi_t\phi(Y_t))dt\biggl|Y_{0-}=x},\ \ \forall x\in\RR,\]
where \(\cR(\pi_t)\) is the normalized entropy as in \cite{dong2024randomized} to encourage the exploration of intervention times. Given any \(\blambda=(\lambda_1,\lambda_2)\in(0,+\infty)^2\), the value function \(\psi^{\blambda}\) of the randomized impulse control problem is the fixed point for the compound operator \(\stp^{\blambda}=T^{\lambda_1}\circ\nonlocal^{\lambda_2}\); the intensity-jump distribution pair \((\pi^*,\mu^*)\) denotes the corresponding optimal optimal policy if \(\psi^{\blambda}\) is attained at \((\pi^*,\mu^*)\).

Based on the randomized framework introduced above, we first derive the associated HJB equation for the value function \(\psi^{\blambda}\). Then, using an equivalent randomization method via a Poisson compound measure, we provide a verification theorem. This verification theorem guarantees the uniqueness of the solution to the randomized impulse control problem. To establish the existence of the solution, we propose a constructive proof via an iterative approach. We then show that the iterates of value functions admit a pointwise limit, which is exactly a fixed point for the compound operator \(\stp^{\blambda}\); indeed, the randomized impulse control value function \(\psi^{\blambda}\) is shown to be a \(C^{2,\alpha}_{\mathrm{loc}}\) classical solution to the corresponding HJB equation, and it can be attained at a unique optimal randomized policy \((\pi^*,\mu^*)\). With some additional boundedness assumptions, we establish the convergence of  \(\psi^{\blambda}\) to the corresponding classical impulse control value function \(\psi\) as \(\blambda\) approaches \((0,0)\) upon taking a proper subsequence.

The iterative approach in the proof of existence for \(\psi^{\blambda}\) naturally gives rise to the corresponding RL algorithm, where the iterates correspond to the policy improvement steps whose convergence to the randomized value function is guaranteed by this existence proof. Under the same set of bounded assumptions, we can further derive a geometric convergence rate. We then design a temporal–difference (TD)-based algorithm. It provides an offline model-free learning procedure to solve the randomized impulse control problem. We implement and evaluate this RL algorithm on a classical linear model that has been widely explored in the impulse–control literature; see \cite{constantinides1978existence,jeanblanc1993impulse,guo2009smooth}, for instance.
The experiments empirically verify the convergence of the randomized problem to the classical one as \((\lambda_1,\lambda_2)\) approaches \((0,0)\). We also provide a sensitivity analysis with respect to the volatility parameter \(\sigma\) in the linear model under the randomized setting. The exploration--exploitation trade off is demonstrated through the impact of \(\sigma\) on value functions, intervention intensities as well as jump distributions.

The paper is organized as follows: 
In \Cref{sec:formulation}, we introduce a randomization framework for the impulse-control problem using some newly-defined entropy-regularized operators, and state the definition of the solution to this randomized problem. In \Cref{sec:prelim}, we discuss some preliminary properties for the aforementioned randomized operators. We present the HJB characterization as well as a verification theorem for the randomized impulse control problem in \Cref{sec:HJB-vrf}. This verification theorem guarantees the uniqueness of the solution to this randomized solution.
In \Cref{sec:existence}, we propose an iterative approach to prove the existence of the solution and provide the regularity of the value function.  In \Cref{sec:rand2classical}  we establish the convergence of randomized impulse control value function to its classical counterpart as the regularization effect disappears.
Based on the randomization framework, we develop an offline RL algorithm in 
\Cref{sec:rl}: we first prove geometric convergence of a policy-improvement scheme, and then introduce a temporal–difference method to implement a model-free version of the RL algorithm, and finally report experimental results on a widely studied linear problem. All the detailed proofs for the theoretical results in \Cref{sec:prelim,sec:HJB-vrf,sec:existence,sec:rand2classical,sec:rl} are deferred to \Cref{app:a}; additional experimental results are documented in \Cref{app:b}.

\subsection*{Notations} 
We specify the following list of notations for the rest of this paper.
\begin{itemize}
    \item Given any \(k\in\NN\), let \(\CCal^k=\fcls[k]{\RR}\) be the class of \(k\)-times-differentiable functions on \(\RR\). When \(k=0\), we use \(\CCal=\CCal(\RR)\) to denote the class of continuous functions.
    \item Given any \(\alpha\in(0,1]\) and any \(k\in\NN\), let \(\CCal^{k,\alpha}=\fcls[k,\alpha]{\RR}\) denote the \(\alpha\)-H\"{o}lder space,
    \[\CCal^{k,\alpha}\coloneqq\left\{f\in\CCal^k:\|f\|_{\CCal^{k,\alpha}}\coloneqq\|f\|_{\CCal^k}+\left[f^{(k)}\right]_\alpha<\infty\right\},\]
    where \(\|f\|_{\fcls[k]{K}}\coloneqq\sup\limits_{0\leq l\leq k,x\in \RR}|f^{(l)}(x)|\) and \(\left[f^{(k)}\right]_\alpha\coloneq \sup\limits_{x,y\in\RR,x\neq y}\frac{\left|f^{(k)}(x)-f^{(k)}(y)\right|}{|x-y|^\alpha}\), for any function \(f\in\CCal^k\) and \(f^{(l)}\) being the \(l\)-th order derivative of \(f\) for any \(l=0,\dots,k\). Given any precompact subset \(K\) of \(\RR\) denoted by \(K\Subset \RR\), let \(\fcls[k,\alpha]{K}\) denote the \(\alpha\)-H\"{o}lder space restricted to domain \(K\). Let \(\CCal^{k,\alpha}_{loc}=\CCal^{k,\alpha}_{loc}\left(\RR\right)\) be the locally \(\alpha\)-H\"{o}lder space,
    \[\CCal^{k,\alpha}_{loc}\coloneqq\left\{f:\RR\to\RR:f\in\fcls[k,\alpha]{K},\,\forall K\Subset\RR\right\}.\]
    For simplicity, when \(k=0\), we use \(\CCal^\alpha\) and \(\CCal^\alpha_{loc}\), with \(\|\cdot\|_{\CCal}\) denoted as \(\|\cdot\|_\infty\).
    \item Let \(\lips\) be the set of Lipschitz-continuous functions,
    \[\lips\coloneqq\left\{f:\RR\to\RR:[f]_1<\infty\right\}.\]
    Denote \(\|f\|_{\lips}\coloneqq\sup_{x\in\RR}\frac{|f(x)|}{1+|x|}\) for any \(f\in\lips\). Given any \(L\in[0,\infty)\), let \(\lips[L]\) be the set of \(L\)-Lipschitz functions,
    \[\lips[L]\coloneqq\left\{f\in\lips:[f]_1\leq L\right\}.\]
    \item Let \(\test\) denote the set of test functions \(\phi:\RR\to\RR\) such that \(\inf_{x\in\RR}\phi(x)\in\RR\) and \(\underline{\beta}\coloneqq\inf\{\beta\geq0:\sup_{x\in\RR}\frac{|\phi(x)|}{1+|x|^\beta}<+\infty\}<+\infty\). 
    \item Let \(\pcls{\RR}\) be the set of probability measures on \(\RR\). For any \(p\geq1\), let \(\pcls[p]{\RR}\) be the  set of probability measures with finite \(p\)-th moment,
    \[\pcls[p]{\RR}\coloneqq\left\{\mu\in\pcls{\RR}:\|\mu\|_p^p\coloneq\int_{\RR}|x|^p\mu(dx)<\infty\right\}.\]
\end{itemize}

\section{Randomized Impulse Control Problem}\label{sec:formulation}
In this section, we introduce the mathematical formulation of the randomized impulse control problem. Consider a filtered probability space \((\Omega,\FCal,\pbm,\filt=\{\FCal_t\}_{t\geq0})\) supporting a standard one-dimensional Brownian motion \(W=\{W_t\}_{t\geq0}\), where the filtration \(\filt\) satisfies the usual conditions. If there is no additional specification of probability measures, we will use \(\EE\) to denote the expectation with respect to \(\pbm\). 

On \((\Omega, \FCal,\pbm,\filt)\), the uncontrolled state process \(X=\{X_t\}_{t\geq0}\) is assumed to satisfy the following dynamic,
\begin{equation}\label{eq:sde-uncontrolled}
\mathrm dX_t = b\left(X_t\right)\mathrm dt + \sigma \left(X_t\right)\mathrm dW_t,
\end{equation}
where \(X_0\sim\mu_0\in\pcls[2]{\RR}\) such that \(X_0\perp W\) and \(X_0\in\FCal_0\). Let \(\LCal\) denote the infinitesimal generator associated with \eqref{eq:sde-uncontrolled} such that for any given test function \(\phi\in\CCal^2\), 
\begin{equation}
    \label{eq:inf-gen}
    \LCal\phi(x)=b(x)\phi'(x)+\frac{\sigma(x)^2\phi''(x)}{2},\quad\forall x\in\RR.
\end{equation}
We first specify the following assumptions for the state dynamic \eqref{eq:sde-uncontrolled}.
\begin{assumption}
    \label{ass:dynamic}
    The drift \(b\) and the volatility \(\sigma\) in \eqref{eq:sde-uncontrolled} satisfy the following conditions.
    \begin{enumerate}[label=\textup{(\roman*)}]
        \item There exists \(L>0\) such that 
        \(|b(x)-b(y)|+|\sigma(x)-\sigma(y)|\leq L|x-y|,\quad\forall x,y\in\RR.\)
        \item There exists \(\sigma_0>0\) such that \(|\sigma(x)|\geq\sigma_0,\quad\forall x\in\RR.\)
    \end{enumerate}
\end{assumption}
Under Assumption~\ref{ass:dynamic}, \eqref{eq:sde-uncontrolled} admits a strong solution adapted to \(\filt\) such that \(\EE[X_t^2]<\infty\) for all \(t\geq0\); see, for instance, \cite[Theorem 2.9]{karatzas1998brownian}. In addition, we have the following technical lemma which is a direct result of applying It\^{o}'s formula, Gr\"{o}nwall's inequality and Cauchy-Schwarz inequality.
\begin{lemma}
    \label{lem:l1-error-propagation}
    Let \(X^x=\{X_t^x\}_{t\geq0}\) satisfy \eqref{eq:sde-uncontrolled} with \(X_0=x\) for any \(x\in\RR\). Then, for any \(x,y\in\RR\), \(\E{\sup_{t\geq0}e^{-\left(L+\frac12L^2\right)t}|X^x_t-X^y_t|}\leq |x-y|\).
\end{lemma}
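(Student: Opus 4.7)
The plan is to let $Z_t := X^x_t - X^y_t$ and exploit the linear-in-$Z$ structure of its SDE forced by Assumption~\ref{ass:dynamic}(i). Writing $b(X^x_t) - b(X^y_t) = \tilde b_t Z_t$ and $\sigma(X^x_t) - \sigma(X^y_t) = \tilde\sigma_t Z_t$ with $|\tilde b_t|, |\tilde\sigma_t| \leq L$, and setting $c := L + \tfrac{1}{2}L^2$, I would apply It\^o's formula to $e^{-2ct}Z_t^2$. The drift collapses to $e^{-2ct}\bigl(-2c + 2\tilde b_t + \tilde\sigma_t^2\bigr)Z_t^2 \leq e^{-2ct}\bigl(-2c + 2L + L^2\bigr)Z_t^2 = 0$ by the choice of $c$, so $M_t := e^{-2ct}Z_t^2$ is a nonnegative local supermartingale with initial value $|x-y|^2$.

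Taking expectations after the usual localization yields $\E{M_t} \leq |x-y|^2$; equivalently, applying It\^o to $Z_t^2$ alone and using the Lipschitz bounds produces the integral inequality $\E{Z_t^2} \leq |x-y|^2 + (2L+L^2)\int_0^t \E{Z_s^2}\dd s$, so that Gr\"onwall's inequality delivers the same $L^2$ estimate $\E{Z_t^2}\leq|x-y|^2 e^{2ct}$. A Cauchy--Schwarz (Jensen) step then promotes this to the pointwise $L^1$ bound $\E{e^{-ct}|Z_t|} \leq |x-y|$ for every $t \geq 0$.

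To bring the supremum inside the expectation, I would use the explicit linear-SDE representation $Z_t = (x-y)\exp\bigl(\int_0^t \tilde\sigma_s\dd W_s - \tfrac{1}{2}\int_0^t\tilde\sigma_s^2\dd s + \int_0^t \tilde b_s\dd s\bigr)$, which is pathwise well-defined under Assumption~\ref{ass:dynamic}(ii) since coalescence of $X^x$ and $X^y$ is ruled out; this factors $e^{-ct}|Z_t|$ as $|x-y|$ times the Dol\'eans exponential of $\int_0^\cdot \tilde\sigma_s\dd W_s$ times the strictly decaying factor $\exp(\int_0^t(\tilde b_s-c)\dd s) \leq 1$, which can be paired with Doob's maximal inequality for the supermartingale $M_t$ to absorb the pathwise supremum. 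The main obstacle is precisely this last step: exchanging $\sup$ and $\E$ while preserving the sharp constant $1$ is subtle, since a generic Doob-type bound on $\sup_t \sqrt{M_t}$ for a nonnegative supermartingale would otherwise produce an estimate of the form $C|x-y|$ with $C>1$, so one must exploit the explicit factorization above (rather than the coarse supermartingale-maximal route) to keep the constant at $1$.
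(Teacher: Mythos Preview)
Your route---It\^o's formula on $Z_t^2$, Gr\"onwall, then Cauchy--Schwarz---is exactly the mechanism the paper invokes, and it correctly yields the pointwise estimate $\E{e^{-ct}|X^x_t-X^y_t|}\leq|x-y|$ for every fixed $t$ with $c=L+\tfrac12L^2$. That much is fine and matches the paper's one-line justification.

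Your hesitation about pulling the supremum inside with constant exactly $1$ is not a technicality you can patch with the factorization you propose---it is a genuine obstruction. The representation $e^{-ct}|Z_t|=|x-y|\,\mathcal E_t\cdot\exp\bigl(\int_0^t(\tilde b_s-c)\dd s\bigr)$ does bound the last factor by $1$, but then you still need $\E{\sup_t\mathcal E_t}\leq 1$ for the Dol\'eans exponential $\mathcal E_t$, and for any nonconstant positive martingale starting at $1$ this expectation is strictly larger than $1$. Concretely, take $b\equiv0$ and $\sigma$ linear with slope $L$ (so $\tilde\sigma\equiv L$, $\tilde b\equiv 0$): then $e^{-ct}|Z_t|=|x-y|\exp\bigl(LW_t-(L+L^2)t\bigr)$, and since $\sup_{t\geq0}(W_t-\mu t)\sim\mathrm{Exp}(2\mu)$ for $\mu>0$, one computes $\E{\sup_{t\geq0}e^{-ct}|Z_t|}=|x-y|\bigl(1+\tfrac{L}{2+L}\bigr)>|x-y|$. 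So the inequality as literally stated, with the supremum inside and constant $1$, fails in this example; the supermartingale-maximal route you mention gives at best $2|x-y|$.

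In short: your argument correctly delivers $\sup_{t\geq0}\E{e^{-ct}|X^x_t-X^y_t|}\leq|x-y|$, which is all that It\^o--Gr\"onwall--Cauchy--Schwarz can produce, and you were right to flag the stronger claim as the sticking point. The paper does not supply any additional ingredient here; its downstream uses of the lemma (e.g.\ in the proofs of the Lipschitz estimates) either only need the pointwise version via Fubini, or would require a constant larger than $1$ if the pathwise supremum is genuinely used.
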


\subsection{Review of the Classical Impulse Control Problem}\label{subsec:classical_impulse_control}
In the literature of the classical impulse control problem without randomization (see, for instance, \cite{guo2009smooth}), the objective can be seen as the following optimization problem,
\begin{equation}
    \label{eq:obj-classical}
    \psi(x)\coloneqq\inf_{\upsilon\in\Upsilon}J(x;\upsilon),\quad\forall x\in\RR.
\end{equation}
Here, \(\Upsilon\) denotes the set of admissible impulse control policies, \(\upsilon=\bigl\{(\tau_n,\xi_n)\bigr\}_{n\ge 1}\), such that \(\tau_n\) is an \(\filt\)-stopping time and the random jump \(\xi_n\in\FCal_{\tau_n}\) for any \(n\in\NN^+\), with \(\tau_n\uparrow\infty\) as \(n\to\infty\), \(\pbm\)-almost surely. Under any \(\upsilon\in\Upsilon\), the controlled state process satisfies
\begin{equation}\label{eq:sde-controlled}
\mathrm dX_t
= b(X_{t-})\mathrm dt + \sigma(X_{t-})\mathrm dW_t
+ \sum_{n\ge 1} \xi_n\delta(t-\tau_n),
\end{equation}
where \(X_{0-}\sim\mu_0\). Here, \(\delta\) denotes the Dirac mass at 0. The cost function \(J(x;\upsilon)\) in \eqref{eq:obj-classical} for any \(x\in\RR\) under any given \(\upsilon\in\Upsilon\) is defined as
\begin{equation}\label{eq:performance}
J(x;\upsilon)
=\mathbb E^{x}\biggl[
\int_{0}^{\infty} e^{-rt}f(X_{t-})\mathrm dt
+\sum_{n\ge 1} e^{-r\tau_n}l(\xi_n)
\biggr],
\end{equation}
with discount rate \(r>0\), running cost \(f:\mathbb R\to\mathbb R\), and cost of intervention \(l:\mathbb R\to\mathbb R\). We then introduce the following assumptions related to the cost function.
\begin{assumption}\label{assump:basic}
The running cost \(f\), the cost of intervention \(l\), and the discount rate \(r\) satisfy the following conditions.
\begin{enumerate}[label=\textup{(\roman*)},ref=Assumption \ref{assump:basic}-(\roman*)]
    \item The running cost \(f:\RR\to[0,+\infty)\) satisfies \(f(0)=0\) and there exists $L_f>0$ such that \(f\in\lips[L_f]\), that is,
    \begin{equation}
    |f(x)-f(y)|\leq L_f|x-y|, \quad \forall x,y\in \RR.
    \end{equation}
    \item The cost of intervention \(l:\RR\to\RR\) satisfies 
    \begin{enumerate}
        \item \(\exists K>0\) such that \(\inf_{x\in\RR}l(x)=l(0)=K\) and \(l(x)+l(y)\geq l(x+y)+K\) for any \(x,y\in\RR\);
        \item \(l(\xi)\to+\infty\) as \(|\xi|\to+\infty\);
        \item \(\exists L_l>0\) such that \(l\in\lips[L_l]\), that is, 
        \[|l(x)-l(y)|\leq L_l|x-y|\quad\forall x,y\in\RR.\]
    \end{enumerate}
    \item\label{discount} Denoting \(G\coloneqq L+\frac{1}{2}L^2\), the discount factor \(r\) satisfies $r-G\in\left(0,\frac{L_f}{L_l}\right)$. 
\end{enumerate}
\end{assumption}
Let \(\nonlocal\) be a nonlocal operator such that for any test function \(\phi:\RR\to\RR\),
\begin{equation}
    \label{eq:nonloc}
    \nonlocal\phi(x)\coloneqq\inf_{\xi\in\RR}\left[\phi(x+\xi)+l(\xi)\right],\quad \forall x\in\RR.
\end{equation}
Classical results
\cite{bensoussan1984impulse,guo2009smooth}
show that the value function $\psi$ in \eqref{eq:obj-classical} is the unique viscosity solution of the following Hamilton-Jacobi-Bellman (HJB) equation of the quasi-variational-inequality type,
\begin{equation}\tag{HJB-QVI}\label{eq:inf-impulse-classical}
\min\left\{\cL\phi(x)-r\phi(x)+f(x),\cM\phi(x)-\phi(x)\right\}=0,
\quad\forall x\in\RR.
\end{equation}
Moreover, $\psi\in\lips\cap\CCal^2(\RR\setminus\partial\mathfrak{C})$, where \(\partial\mathfrak{C}\) denotes the boundary of the continuation region \(\mathfrak{C}\coloneqq\{x\in\RR:\cL\psi(x)-r\psi(x)+f(x)=0\}\). The corresponding optimal classical impulse control policy \(\upsilon^*=\{(\tau^*_n,\xi^*_n)\}_{n\geq1}\) is given by 
\begin{align}
    &\tau_1^*=\inf\{t\geq0:X_{t-}\not\in\mathfrak{C}\};\quad \tau_{n}^*=\inf\{t>\tau_{n-1}^*:X_{t-}\not\in\mathfrak{C}\},\,n\geq2.\label{eq:classical-stp}\\
    &\xi_n^*=\xi^*(X_{\tau_n^*-})\in\arg\min_{\xi\in\RR}\psi(X_{\tau_n^*-}+\xi)+l(\xi),\quad n\geq1.\label{eq:classical-jump}
\end{align}
In \eqref{eq:classical-jump}, \(\xi^*:\ACal\to\RR\) denotes an optimal jump when the current state is in the action region \(\ACal=\RR\setminus\mathfrak{C}\).

Apart from characterizing the optimal impulse control policy via \eqref{eq:inf-impulse-classical}, an equivalent characterization has also been analyzed in \cite{bensoussan1984impulse,guo2009smooth} via the following compound optimal stopping operator \(\stp\), where for any test function \(\phi:\RR\to\RR\) that is bounded from below, 
\begin{equation}
    \tag{classical}\label{eq:inf-classical-impulse-control}
    \stp\phi(x)\coloneqq\inf_{\tau\in\mathbb T}\E[x]{\int_0^\tau e^{-rt}f(X_{t-})\dd{t}+e^{-r\tau}\nonlocal\phi(X_{\tau-})},
\end{equation}
subject to \eqref{eq:sde-uncontrolled}, where \(\mathbb T\) denotes the set of \(\filt\)-stopping times such that \(\pbm(\tau<\infty)=1\) for any \(\tau\in\mathbb T\), and \(\E[x]{\cdot}=\E{\cdot|X_{0-}=x}\). \revise{In particular, the value function \(\psi\) in \eqref{eq:obj-classical}, and $\stp\psi$ as defined in \eqref{eq:inf-classical-impulse-control}, both solve the same \eqref{eq:inf-impulse-classical} with $\nonlocal\phi$ replaced by $\nonlocal\psi$. It can be shown that $\psi$ is a fixed point for \(\stp\), that is, \(\psi=\stp\psi\) on \(\RR\).} Correspondingly, from \eqref{eq:classical-stp}, we know that \(\stp\psi(x)\) defined by \eqref{eq:inf-classical-impulse-control} admits an optimal stopping time with respect to \eqref{eq:sde-uncontrolled},
\begin{equation}
    \label{eq:classical-opt-jump-alt}
    \tau^*=\inf\{t\geq0:X_{t-}\not\in\mathfrak{C}\}\in\mathbb T.
\end{equation}

\subsection{Randomization of Impulse Controls}\label{subsec:randomized_impulse_control}
\revise{The deterministic nature of the optimal classical impulse control policy in \eqref{eq:classical-stp}--\eqref{eq:classical-jump} lies in the fact that  both the intervention time and the jump are deterministic functions of the state process. This deterministic ``wait–then–intervene'' structure is natural when the environment $(b,\sigma,f,l)$ is fully specified, but it becomes restrictive when some or all of these components are only partially known and must be inferred from data. In such settings, classical grid-based HJB--QVI solvers are difficult to deploy, whereas RL methods are more natural. }

\revise{Inspired by \eqref{eq:inf-classical-impulse-control}, we propose a two-fold randomization scheme for impulse controls.  The classical policy induces sparse and highly structured feedback: interventions occur only at a finite set of deterministic boundary points and the chosen jumps bring the state process to corresponding optimal positions, which yields limited exploration and weak learning signals. By contrast, allowing randomness in both the jump distribution and the intervention time produces smoother, more frequent feedback and an intrinsic exploration mechanism; this is crucial for balancing exploration and exploitation in the subsequent RL framework.}

\revise{The following two randomized operators are the key ingredients for the formulation of the randomized impulse control problem in \Cref{subsec:scheme}.}

\paragraph{Randomization of the jump}
\revise{Our Assumption~\ref{assump:basic} on the structure of the cost functions ensures that the classical value function is approximately ``V-shaped'', where the origin is near the unique minimum and inside the continuation region. Inspired by this observation, we adopt a reference jump distribution so that the state process is driven towards the origin where
the value function is near its minimum.}

More specifically, for any \(x\in\RR\), let \(\Phi_x\) denote the Gaussian distribution \(\cN(-x,1)\) with mean \(-x\) and variance \(1\). Intuitively, \(\Phi_x\) denotes a simple randomized jump strategy at \(x\) such that the jump should send the state back to the origin subject to a standard normal disturbance. 

For any \(\mu\in\pcls{\RR}\) such that \(\mu\ll\Phi_x\), recall the Kullback-Leibler (KL) divergence
\[\kl{\mu}{\Phi_x}\coloneqq\E[{\zeta\sim\Phi_x}]{\rho(\zeta)\log \rho(\zeta)}\geq0,\]
where \(\rho\) is the Radon-Nikodym derivative, \(\rho=\frac{d\mu}{d\Phi_x}\). Given any \(\lambda>0\), define the randomized nonlocal operator \(\nonlocal^\lambda\) such that, for any test function \(\phi:\RR\to\RR\) that is bounded from below,
\begin{equation}
    \label{eq:rand-nonloc}
    \nonlocal^\lambda\phi(x)\coloneqq\inf_{\mu\in\pcls{\RR},\mu\ll\Phi_x}\int_{\RR}\left(\phi(x+\xi)+l(\xi)\right)\mu(\dd{\xi})+\lambda\kl{\mu}{\Phi_x},\quad\forall x\in\RR.
\end{equation}

\rerevise{In fact, our proofs rely on $\Phi_x$ only through three properties: (1) a strictly
positive Lebesgue density for the KL divergence in \eqref{eq:rand-nonloc} to be
well-defined; (2) finite log-moment-generating functions
$\log \mathbb{E}_{\zeta \sim \Phi_x}[e^{-(\varphi(x+\zeta)+l(\zeta))/\lambda}]$
for any Lipschitz $\varphi$ (Lemma~\ref{lem:rand-nonloc-attain}); and (3) a local
Lipschitz condition on $d\Phi_x/d\Phi_y$ in $x$, supporting
Lemma~\ref{lem:rand-nonloc-attain} and~\ref{lemma:nonlocal_convergence}.
Any location-scale family with sub-Gaussian tails satisfies all three; the Gaussian
choice is made here for the closed form \eqref{eq:rand-nonloc-value}.}


\paragraph{Randomization of the intervention time} 
Similar to \cite{dong2024randomized}, we borrow the terminology from Cox process \cite{jeanblanc2009mathematical} with $\pi_t\geq0$ being the intensity for any \(t\geq0\), and define the survival probability by time \(t\geq0\) as \(p_t=\exp\left\{-\int_0^t\pi_s\dd{s}\right\}\); or equivalently, the process \(p=\{p_t\}_{t\geq0}\) satisfies \(p_0=1\) and
\begin{equation}
    \label{eq:survival}
    dp_t=\dot{p}_tdt=-\pi_tp_tdt.
\end{equation}
Define \(\RCal:[0,+\infty)\to\RR\) as \(\RCal(x)=x-x\log{x}\) for all \(x>0\) and \(\RCal(0)=\lim_{x\to0^+}R(x)=0\). Notice that \(\RCal\) attains its maximum at and only at \(x=1\). For any \(\lambda>0\), define the randomized optimal stopping operator \(T^\lambda\) such that for any test function \(\phi\in\test\) and any \(x\in\RR\),
\begin{equation}
    \label{eq:rand-stp}
    T^\lambda\phi(x)=\inf_{\pi=\{\pi_t\}_{t\geq0},\pi_t\geq0}J^{\lambda,\phi}(1,x;\pi)
\end{equation}
subject to \eqref{eq:sde-uncontrolled}, where for any \(p\in[0,1]\),
\begin{equation}
    \label{eq:rand-stp-cost}
    J^{\lambda,\phi}(p,x;\pi)\coloneqq\E[x]{\int_0^\infty pe^{-\int_0^tr+\pi_s\dd{s}}\left(f(X_t)-\lambda\RCal(\pi_t)+\pi_t\phi(X_t)\right)\dd{t}},
\end{equation}
for any \(\pi=\{\pi_t\}_{t\geq0}\) with \(\pi_t\geq0\) for all \(t\geq0\). For simplicity, we abuse the notation slightly and denote \(J^{\lambda,\phi}(\cdot;\pi)=J^{\lambda,\phi}(1,\cdot;\pi)\) for any intensity process \(\pi\).

\subsection{The Two-Fold Randomization of Impulse Controls}\label{subsec:scheme}
Analogous to \eqref{eq:inf-classical-impulse-control}, for the randomized impulse control problem, we define the following compound optimal stopping operator \(\stp^{\blambda}\) under any given \(\blambda=(\lambda_1,\lambda_2)\in(0,+\infty)^2\) such that for any test function \(\phi\in\test\) and any \(x\in\RR\) and subject to \eqref{eq:sde-uncontrolled},
\begin{equation}\tag{randomized}\label{eq:inf-randomized-impulse-control}
    \begin{aligned}
&\stp^{\blambda}\phi(x)=T^{\lambda_1}\left[\nonlocal^{\lambda_2}\phi\right](x)\\
=&\inf_{\pi}\E[x]{\int_0^\infty e^{-\int_0^tr+\pi_s\dd{s}}\left(f(X_t)-\lambda_1\RCal(\pi_t)+\pi_t\nonlocal^{\lambda_2}\phi(X_t)\right)\dd{t}}.  
    \end{aligned}
\end{equation}

In light of the fixed-point characterization of the classical impulse control problem via \eqref{eq:inf-classical-impulse-control}, we have the following definition of the solution to the randomized impulse control problem via \eqref{eq:inf-randomized-impulse-control}.
\begin{definition}\label{defn:rand-impulse}
    A function \(\psi^{\blambda}:\RR\to\RR\) is said to be a value function for the randomized impulse control problem associated with \(\stp^{\blambda}\) in \eqref{eq:inf-randomized-impulse-control} under any given \(\blambda=(\lambda_1,\lambda_2)\in(0,+\infty)^2\), if \(\psi^{\blambda}\in\lips\) is bounded from below, and \(\psi^{\blambda}=\stp^{\blambda}\psi^{\blambda}\) on \(\RR\). Moreover, if \(\pi^*=\{\pi^*_t\}_{t\geq0}\), with \(\pi^*_t\geq0\) for all \(t\geq0\), and \(\mu^*=\{\mu^*_x\}_{x\in\RR}\), with \(\mu^*_x\in\{\mu\in\pcls{\RR}:\mu\ll\Phi_x\}\) for all \(x\in\RR\), satisfy
    \[\begin{cases}
        \nonlocal^{\lambda_2}\psi^{\blambda}(x)=\E[\xi\sim\mu^*_x]{\psi^{\blambda}(x+\xi)+l(\xi)}+\lambda_2\kl{\mu^*_x}{\Phi_x},\\
        \psi^{\blambda}(x)=\E[x]{\int_0^\infty e^{-\int_0^tr+\pi^*_s\dd{s}}\left(f(X_t)-\lambda_1\RCal(\pi^*_t)+\pi^*_t\nonlocal^{\lambda_2}\psi^{\blambda}(X_t)\right)\dd{t}},
    \end{cases}\;\; \forall x\in\RR,\]
    then we call \(\upsilon^{\blambda}=(\pi^*,\mu^*)\) an optimal randomized impulse control policy associated with \(\psi^{\blambda}\).
\end{definition}

\section{Preliminary Analysis on the Randomized Operators}\label{sec:prelim}
In this section, we first study the properties of the randomized operators introduced in \Cref{sec:formulation}. These properties are fundamental for developing the HJB characterization for the randomized problem  in \Cref{sec:HJB-vrf}, and for solving the randomized impulse control problem according to Definition~\ref{defn:rand-impulse} in \Cref{sec:existence}.

\subsection{The Randomized Nonlocal Operator}\label{subsec:rand-nonlocal}
First, we introduce several basic properties of the randomized nonlocal operator \(\nonlocal^\lambda\) in \eqref{eq:rand-nonloc} being applied to test functions \(\phi\in\test\).
\begin{lemma}
    \label{lem:rand-nonloc-basic}
    Fix any \(\lambda>0\).
    \begin{enumerate}[label=\textup{(\roman*)},ref= Lemma~\ref{lem:rand-nonloc-basic}-(\roman*)]
        \item\label{finite} For any \(\phi\in\test\), \(\nonlocal^\lambda\phi(x)\in\RR\) for all \(x\in\RR\).
        \item\label{global-non-exp} {For any \(\phi_1,\phi_2\in\test\) with \(\phi_1-\phi_2\in L^\infty(\RR)\), \(\left\|\nonlocal^\lambda\phi_1-\nonlocal^\lambda\phi_2\right\|_{L^\infty(\RR)}\leq\left\|\phi_1-\phi_2\right\|_{L^\infty(\RR)}\).}
        \item\label{concave} \(\nonlocal^\lambda\) is concave, i.e., for any \(\phi_1,\phi_2\in \test\) and any \(\eta\in[0,1]\),
        \[\nonlocal^\lambda\left[\eta\phi_1+(1-\eta)\phi_2\right]\geq\eta\nonlocal^\lambda\phi_1+(1-\eta)\nonlocal^\lambda\phi_2.\]
        \item\label{increase} \(\nonlocal^\lambda\) is nondecreasing, i.e., for any \(\phi_1,\phi_2\in \test\) with \(\phi_1\leq\phi_2\) on \(\RR\), \(\nonlocal^\lambda\phi_1\leq\nonlocal^\lambda\phi_2\).
    \end{enumerate}
\end{lemma}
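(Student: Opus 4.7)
The plan is to exploit the variational structure of \(\nonlocal^\lambda\) by writing
\[
\nonlocal^\lambda\phi(x) \;=\; \inf_{\mu\ll\Phi_x} F(\mu,\phi;x),\qquad F(\mu,\phi;x) \coloneqq \int_{\RR}\bigl[\phi(x+\xi)+l(\xi)\bigr]\,\mu(\dd{\xi}) + \lambda\,\kl{\mu}{\Phi_x},
\]
and then reading off each of the four assertions from how \(F\) depends on \(\mu\) and on \(\phi\). Once this viewpoint is in place, only (i) requires any real care; (ii)--(iv) reduce to short observations about infima of affine or monotone families.

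For (i), the lower bound follows from three nonnegativity-style facts combined additively: \(\phi(x+\xi)\geq \inf_{\RR}\phi>-\infty\) because \(\phi\in\test\), \(l(\xi)\geq K\) by Assumption~\ref{assump:basic}, and \(\kl{\mu}{\Phi_x}\geq 0\) for every admissible \(\mu\); together they yield \(\nonlocal^\lambda\phi(x)\geq \inf_{\RR}\phi + K\). For the upper bound I would test the infimum against the canonical choice \(\mu=\Phi_x\), which is trivially admissible and gives \(\kl{\Phi_x}{\Phi_x}=0\); under \(\Phi_x\) the shifted variable \(x+\xi\) has law \(\mathcal{N}(0,1)\), so the polynomial-growth condition built into \(\test\), together with the Lipschitz property of \(l\), make both \(\int\phi(x+\xi)\,\Phi_x(\dd{\xi})\) and \(\int l(\xi)\,\Phi_x(\dd{\xi})\) finite. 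Hence \(\nonlocal^\lambda\phi(x)<+\infty\).

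For (ii), if \(\|\phi_1-\phi_2\|_{L^\infty}=+\infty\) the claim is vacuous, so I may assume it is finite. Then for every admissible \(\mu\),
\[
\bigl|F(\mu,\phi_1;x)-F(\mu,\phi_2;x)\bigr| \;=\; \left|\int_{\RR}(\phi_1-\phi_2)(x+\xi)\,\mu(\dd{\xi})\right|\;\leq\; \|\phi_1-\phi_2\|_{L^\infty},
\]
and taking the infimum in \(\mu\) on both sides, together with the symmetric bound obtained by swapping \(\phi_1\) and \(\phi_2\), yields the non-expansive estimate uniformly in \(x\). For (iii), \(F\) is affine in \(\phi\) at each fixed \(\mu\) and \(x\); the pointwise infimum of a family of affine maps is concave, which is exactly the content of (iii). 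For (iv), monotonicity of \(F\) in \(\phi\) is pointwise in \(\xi\) and therefore survives integration against any \(\mu\), and hence also the infimum.

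The only genuine obstacle is the upper bound in (i): one must confirm that \(\mu=\Phi_x\) delivers a finite value of \(F\) at every \(x\), so that the infimum defining \(\nonlocal^\lambda\phi(x)\) is not vacuously \(+\infty\). This is precisely where the definition of \(\test\) (polynomial growth plus boundedness from below) and the Lipschitz structure of \(l\) from Assumption~\ref{assump:basic} are both genuinely used; the remaining parts are transparent consequences of commuting \(\inf\) with affine or monotone operations on \(F\).
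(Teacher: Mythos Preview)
Your proposal is correct and follows essentially the same approach as the paper: for (i) the paper also bounds from below by $\inf_{\RR}\phi+K$ and from above by testing with $\mu=\Phi_x$, and for (iii)--(iv) it simply appeals to the structure of the definition as you do. The only cosmetic difference is in (ii), where the paper argues via $\epsilon$-optimal measures rather than your ``shift and take infimum'' inequality, but these are equivalent routine devices.
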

The next results concern  \(\nonlocal^\lambda\) being applied to more restricted families of test functions, in particular, Lipschitz test functions. This additional restriction is also in alignment with the Lipschitz value function introduced in Definition~\ref{defn:rand-impulse}. {Moreover, a closed-form expression for our randomized operator is provided.} 
\begin{lemma}
    \label{lem:rand-nonloc-attain}
    Fix any \(\lambda>0\) and any test function \(\phi,\phi_1,\phi_2\in\lips\) that is bounded from below. Then, \(\nonlocal^\lambda\phi\in\lips[L_l]\), and
    \begin{equation}
        \label{eq:rand-nonloc-value}
        \nonlocal^\lambda\phi(x)=-\lambda\log\E[\zeta\sim\Phi_x]{e^{-\frac{\phi(x+\zeta)+l(\zeta)}{\lambda}}},\quad\forall x\in\RR.
    \end{equation}
    Moreover, there exist \(\kappa_1,\kappa_2>0\) with which we define \(\delta_t\coloneqq 2\kappa_1e^{-\kappa_2t^2}\) for all \(t\geq0\) and
    \begin{equation}
        \label{eq:sub-gaussian}
        \subG{}\coloneqq\left\{\nu\in\pcls{\RR}:\pbm(|x+\xi|>t)\leq \delta_t,\,\xi\sim\nu,\,\forall t\geq0\right\},
    \end{equation}
    such that 
    \[\nonlocal^\lambda\phi(x)=\E[\xi\sim\mu_x]{\phi(x+\xi)+l(\xi)}+\lambda\kl{\mu}{\Phi_x},\quad\forall x\in\RR,\]
    where \(\mu_x\in\subG{}\) with \(\frac{d\mu_x(\xi)}{d\Phi_x(\xi)}\propto e^{-\frac{\phi(x+\xi)+l(\xi)}{\lambda}}\).

    Furthermore, for any \(R>0\), there exists \(\mathscr{E}_R>0\) with \(\lim_{R\to\infty}\mathscr{E}_R=0\) such that 
    \begin{equation*}
    \|\nonlocal^{\lambda}\phi_1-\nonlocal^{\lambda}\phi_2\|_{L^\infty(\RR)}\leq \|\phi_1-\phi_2\|_{L^\infty([-R,R])}+\mathscr{E}_R.
    \end{equation*}
\end{lemma}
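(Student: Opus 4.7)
The plan is to reduce all four claims---the closed-form value \eqref{eq:rand-nonloc-value}, the $L_l$-Lipschitz property, the Gibbs representation of $\mu_x$ together with its sub-Gaussian tails, and the continuity in $\phi$---to a single Donsker--Varadhan representation of $\nonlocal^\lambda\phi(x)$.

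For fixed $x$, set $g_x(\xi) := \phi(x+\xi)+l(\xi)$. Since $\phi$ is bounded below and $l\geq K$ with both of linear growth, the partition function $Z_x(\phi) := \int e^{-g_x(\xi)/\lambda}\,\Phi_x(\dd{\xi})$ is finite and strictly positive; defining $\tilde{\mu}_x$ via $d\tilde{\mu}_x/d\Phi_x=e^{-g_x/\lambda}/Z_x(\phi)$, the chain-rule identity
\[
\kl{\mu}{\Phi_x} = \kl{\mu}{\tilde{\mu}_x} - \lambda^{-1}\E[\xi\sim\mu]{g_x(\xi)} - \log Z_x(\phi)
\]
(valid whenever $\mu\ll\Phi_x$) rewrites the functional in \eqref{eq:rand-nonloc} as $\E[\xi\sim\mu]{g_x(\xi)}+\lambda\kl{\mu}{\Phi_x} = -\lambda\log Z_x(\phi)+\lambda\kl{\mu}{\tilde{\mu}_x}$, so by $\kl{\cdot}{\tilde{\mu}_x}\geq 0$ with equality iff the argument equals $\tilde{\mu}_x$, we obtain simultaneously the closed form \eqref{eq:rand-nonloc-value} and the unique minimizer $\mu_x=\tilde{\mu}_x$ with the stated Gibbs density. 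Substituting $\eta=x+\xi$ in the closed form gives the equivalent expression $\nonlocal^\lambda\phi(x)=-\lambda\log\E[\eta\sim\cN(0,1)]{e^{-(\phi(\eta)+l(\eta-x))/\lambda}}$, and the pointwise bound $|l(\eta-x)-l(\eta-y)|\leq L_l|x-y|$ then sandwiches the integrand within $e^{\pm L_l|x-y|/\lambda}$, so taking $-\lambda\log$ yields $\nonlocal^\lambda\phi\in\lips[L_l]$.

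For the sub-Gaussian tails of $\mu_x$, the push-forward $\tilde{\mu}_x$ of $\mu_x$ under $\xi\mapsto x+\xi$ has Lebesgue density proportional to $e^{-\eta^2/2-(\phi(\eta)+l(\eta-x))/\lambda}$. The numerator of $\pbm(|x+\xi|>t)$ is controlled by $C(\phi,l,\lambda)\,e^{-t^2/2}$ via a Chernoff/square-completion bound using $\phi(\eta)\geq\phi(0)-[\phi]_1|\eta|$ and $l\geq K$. The main technical obstacle is to obtain an $x$-uniform lower bound on $Z_x(\phi)$, since a naive bulk-set estimate degrades as $|x|\to\infty$ through the jump cost $l(\eta-x)$. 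My plan is to feed the identity $\nonlocal^\lambda\phi(x)=\E[\xi\sim\mu_x]{g_x(\xi)}+\lambda\kl{\mu_x}{\Phi_x}$ together with the Lipschitz upper bound $\nonlocal^\lambda\phi(x)\leq C+L_l|x|$ (from the previous step) into the lower bound $g_x\geq\phi(0)+K-[\phi]_1|x+\xi|$, then invoke Talagrand's $T_2$-inequality for the Gaussian $\Phi_x$ on the re-centered variable $x+\xi\sim\cN(0,1)$ (independent of $x$) to close a quadratic-in-$\sqrt{\kl{\mu_x}{\Phi_x}}$ inequality that yields an $x$-uniform control on $\kl{\mu_x}{\Phi_x}$, and finally apply the Donsker--Varadhan change-of-measure inequality $\theta\,\pbm_{\mu_x}(A)\leq(e^\theta-1)\pbm_{\Phi_x}(A)+\kl{\mu_x}{\Phi_x}$ with $\theta=t$ and $A=\{|x+\xi|>t\}$ to transfer the Gaussian sub-Gaussianity of $\Phi_x$ to $\mu_x$ as the claimed envelope $2\kappa_1 e^{-\kappa_2 t^2}$.

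Finally, for the continuity estimate, using $\mu_x^{(2)}$ (the Gibbs minimizer for $\phi_2$) as a feasible competitor for $\nonlocal^\lambda\phi_1(x)$ yields $\nonlocal^\lambda\phi_1(x)-\nonlocal^\lambda\phi_2(x)\leq\E[\xi\sim\mu_x^{(2)}]{\phi_1(x+\xi)-\phi_2(x+\xi)}$, with the symmetric inequality via $\mu_x^{(1)}$. Splitting the expectation over $\{|x+\xi|\leq R\}$ and its complement, the bulk contributes at most $\|\phi_1-\phi_2\|_{L^\infty([-R,R])}$, while the tail is bounded (using $|\phi_i(\eta)|\leq\|\phi_i\|_{\lips}(1+|\eta|)$) by a multiple of $\E[\xi\sim\mu_x^{(i)}]{(1+|x+\xi|)\mathbf{1}_{\{|x+\xi|>R\}}}$, which by the sub-Gaussian bound just established is at most $\mathscr{E}_R := C\int_R^\infty(1+s)\,\kappa_1 e^{-\kappa_2 s^2}\,ds$ uniformly in $x$, and $\mathscr{E}_R\to 0$ as $R\to\infty$, completing the proof.
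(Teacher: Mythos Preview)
Your Donsker--Varadhan derivation of the closed form \eqref{eq:rand-nonloc-value} and the Gibbs minimizer, the change of variables $\eta=x+\xi\sim\cN(0,1)$ for the $L_l$-Lipschitz property, and the competitor-plus-truncation argument for the final continuity estimate are all correct and coincide with the paper's proof.

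The gap is in the sub-Gaussian step. The Donsker--Varadhan change-of-measure bound you invoke,
\[
\theta\,\mu_x(A)\leq (e^{\theta}-1)\Phi_x(A)+\kl{\mu_x}{\Phi_x},
\]
with $A=\{|x+\xi|>t\}$ and $\Phi_x(A)\lesssim e^{-t^2/2}$, yields after optimizing in $\theta$ only $\mu_x(A)\lesssim \kl{\mu_x}{\Phi_x}/\log\bigl(1/\Phi_x(A)\bigr)\asymp D/t^2$: a bounded KL divergence simply does not transfer sub-Gaussian tails, so you cannot reach the envelope $2\kappa_1 e^{-\kappa_2 t^2}$. Moreover, your proposed route to an $x$-uniform bound on $D=\kl{\mu_x}{\Phi_x}$ is fragile as written: pairing the upper bound $\nonlocal^\lambda\phi(x)\leq C+L_l|x|$ with the lower bound $g_x\geq \phi(0)+K-[\phi]_1|x+\xi|$ leaves an uncancelled $L_l|x|$ term, so the Talagrand quadratic closes only to $D\leq C_1+C_2|x|$.

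The paper avoids both issues by a single centering trick: write the density as
\[
\frac{d\mu_x}{d\Phi_x}(\zeta)=\frac{\tilde\rho_x(\zeta)}{\tilde Z(x)},\qquad \tilde\rho_x(\zeta)=\exp\Bigl\{-\tfrac{1}{\lambda}\bigl[(\phi(x+\zeta)-\phi(0))+(l(\zeta)-l(-x))\bigr]\Bigr\},
\]
so that Lipschitz continuity of both $\phi$ and $l$ gives the two-sided pointwise bound $e^{-(L_\phi+L_l)|x+\zeta|/\lambda}\leq\tilde\rho_x(\zeta)\leq e^{(L_\phi+L_l)|x+\zeta|/\lambda}$. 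Since $x+\zeta\sim\cN(0,1)$ under $\Phi_x$, the lower bound immediately yields the $x$-uniform normalization $\tilde Z(x)\geq \E_{\cN(0,1)}[e^{-(L_\phi+L_l)|\eta|/\lambda}]=:\underline Z>0$, and the upper bound then gives $\mu_x(|x+\zeta|>t)\leq \underline Z^{-1}\int_{|\eta|>t} e^{(L_\phi+L_l)|\eta|/\lambda-\eta^2/2}\,d\eta/\sqrt{2\pi}$, which is genuinely sub-Gaussian after completing the square. The missed idea is just to use $l(\zeta)\geq l(-x)-L_l|x+\zeta|$ rather than $l\geq K$, so that the $x$-dependent constant $\phi(0)+l(-x)$ cancels between numerator and denominator; no transport inequality is needed.
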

Finally, we have the following comparison results among \(\nonlocal^\lambda\)'s with various \(\lambda>0\) and \(\nonlocal\).
\begin{lemma}
    \label{lem:nonlocal-lambda-convergence-properties}
    The properties hold as follows:
    
    \begin{enumerate}[label=\textup{(\roman*)},ref = Lemma~\ref{lem:nonlocal-lambda-convergence-properties}-(\roman*)]
        \item \label{lem:rand-nonloc-monotone}     For any test function \(\phi\in\lips\) that is bounded from below, \(\nonlocal^{\lambda}\phi\leq \nonlocal^{\lambda'}\phi\) for all \(\lambda,\lambda'\in(0,+\infty)\) such that \(\lambda<\lambda'\).
        \item     \label{lem:compare-classical}
    For any \(\lambda>0\) and any \(\phi\in\test\), \(\nonlocal\phi\leq\nonlocal^\lambda\phi\) on \(\RR\).
        \item\label{lemma:nonlocal_convergence}
  For any test function \(\phi\in\lips\) that is bounded from below, \(\lim_{\lambda\to0^+}\|\nonlocal^{\lambda}\phi-\nonlocal\phi\|_{L^{\infty}(K)}=0\) for any \(K\Subset\RR\). 
    \end{enumerate}

\end{lemma}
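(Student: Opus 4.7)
The plan is to handle the three assertions in the stated order: (i) and (ii) are immediate from the variational definition of $\nonlocal^\lambda$, while (iii) is the substantive claim, to be proved by a Gaussian mollification argument with a carefully tuned bandwidth.

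\textbf{Parts (i) and (ii).} For (i), fix $0<\lambda<\lambda'$. For every admissible $\mu\ll\Phi_x$, since $\kl{\mu}{\Phi_x}\ge 0$,
\begin{equation*}
\int_{\RR}[\phi(x+\xi)+l(\xi)]\,\mu(d\xi)+\lambda\kl{\mu}{\Phi_x}\le\int_{\RR}[\phi(x+\xi)+l(\xi)]\,\mu(d\xi)+\lambda'\kl{\mu}{\Phi_x};
\end{equation*}
taking the infimum over $\mu$ on both sides yields the claim. For (ii), since the pointwise infimum is at most any average, for every admissible $\mu$,
\begin{equation*}
\nonlocal\phi(x)=\inf_{\xi\in\RR}[\phi(x+\xi)+l(\xi)]\le\int_{\RR}[\phi(x+\xi)+l(\xi)]\,\mu(d\xi)\le\int_{\RR}[\phi(x+\xi)+l(\xi)]\,\mu(d\xi)+\lambda\kl{\mu}{\Phi_x};
\end{equation*}
taking the infimum over $\mu$ gives $\nonlocal\phi(x)\le\nonlocal^\lambda\phi(x)$.

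\textbf{Part (iii).} By (i), $\lambda\mapsto\nonlocal^\lambda\phi(x)$ is nondecreasing and by (ii), $\nonlocal^\lambda\phi(x)\ge\nonlocal\phi(x)$, so the monotone limit as $\lambda\to 0^+$ exists and dominates $\nonlocal\phi$. It remains to prove the reverse bound uniformly on any $K\Subset\RR$. Fix such a $K$. Since $\phi$ is Lipschitz and bounded below and $l(\xi)\to+\infty$ as $|\xi|\to\infty$, the map $\xi\mapsto\phi(x+\xi)+l(\xi)$ attains its minimum at some $\xi^*(x)$, and a routine coercivity argument yields $C_K\coloneqq\sup_{x\in K}|\xi^*(x)|<\infty$. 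For each $\epsilon>0$ and $x\in K$, introduce the Gaussian mollifier $\mu_x^\epsilon$ with law $\cN(\xi^*(x),\epsilon^2)$; it is admissible since $\mu_x^\epsilon\ll\Phi_x$. The closed-form KL between two Gaussians gives
\begin{equation*}
\kl{\mu_x^\epsilon}{\Phi_x}=\log(1/\epsilon)+\tfrac12\bigl(\epsilon^2-1+(x+\xi^*(x))^2\bigr)\le\log(1/\epsilon)+C_1,
\end{equation*}
with $C_1$ depending only on $K$ and $C_K$; and by the Lipschitz bounds on $\phi, l$ together with $\int|\xi-\xi^*(x)|\,\mu_x^\epsilon(d\xi)=\epsilon\sqrt{2/\pi}$,
\begin{equation*}
\int_{\RR}[\phi(x+\xi)+l(\xi)]\,\mu_x^\epsilon(d\xi)\le\phi(x+\xi^*(x))+l(\xi^*(x))+([\phi]_1+L_l)\,\epsilon\sqrt{2/\pi}.
\end{equation*}
Adding the two bounds and using the definition of $\nonlocal^\lambda$ as an infimum,
\begin{equation*}
\nonlocal^\lambda\phi(x)\le\nonlocal\phi(x)+([\phi]_1+L_l)\,\epsilon\sqrt{2/\pi}+\lambda\log(1/\epsilon)+\lambda C_1,\qquad\forall x\in K.
\end{equation*}
Choosing $\epsilon(\lambda)=\sqrt{\lambda}$ drives the right-hand side to $\nonlocal\phi(x)$ uniformly over $K$, since $\lambda\log(1/\sqrt{\lambda})=\tfrac12\lambda\log(1/\lambda)\to 0$ as $\lambda\to 0^+$, which combined with the reverse bound from (ii) completes the proof.

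\textbf{Main obstacle.} The delicate point is the joint scaling of $\epsilon$ and $\lambda$: the KL penalty grows like $\log(1/\epsilon)$ whereas the integrated-cost error grows like $\epsilon$, so $\epsilon$ must shrink with $\lambda$ fast enough to concentrate near the Dirac minimizer but slowly enough that $\lambda\log(1/\epsilon)\to 0$. Any power choice $\epsilon=\lambda^\alpha$ with $\alpha\in(0,1)$ works. Uniformity on $K$ rests on the boundedness of $\{\xi^*(x):x\in K\}$, which follows from coercivity of $l$ and the Lipschitz/bounded-below structure of $\phi$.
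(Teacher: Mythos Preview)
Your proof is correct. Parts (i) and (ii) coincide with the paper's argument (the paper uses the optimizer's existence for (i), but your pointwise-inequality-then-infimum route is equivalent and in fact slightly more elementary).

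For part (iii) the two proofs share the same core idea---plug in a narrow Gaussian centered near a minimizer of $\xi\mapsto\phi(x+\xi)+l(\xi)$---but diverge in how uniformity on $K$ is extracted. The paper first establishes \emph{pointwise} convergence (for each $x$ and each $u>\nonlocal\phi(x)$ it exhibits $\lambda_0$ with $\nonlocal^{\lambda_0}\phi(x)\le u$, using a Gaussian whose variance is chosen from $u-\ell$) and then upgrades to local uniform convergence via Dini's theorem, relying on the monotonicity from (i) and the continuity of $\nonlocal\phi$ and $\nonlocal^\lambda\phi$. Your argument bypasses Dini entirely: by bounding $|\xi^*(x)|$ uniformly on $K$ you make the KL and integrated-cost errors uniform in $x$, and the explicit bandwidth choice $\epsilon=\sqrt{\lambda}$ yields a quantitative rate $\sup_K(\nonlocal^\lambda\phi-\nonlocal\phi)=O(\sqrt{\lambda}+\lambda\log(1/\lambda))$. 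The paper's route is shorter to write once the Lipschitz lemmas are in hand; yours is more self-contained and delivers an explicit convergence rate, which could be useful elsewhere in the paper (e.g., in the $\blambda\to0$ analysis of Section~\ref{sec:rand2classical}).
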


\subsection{The Randomized Optimal Stopping Operator}\label{subsec:rand-stop}
In this section, we focus on the randomized optimal stopping operator \(T^\lambda\) being applied to any Lipschitz test function \(\phi\). Through a corresponding HJB characterization and a verification theorem, we show that \(T^\lambda\phi\) is the value function attained at a unique optimal intensity process \(\pi^*\).
\begin{lemma}
    \label{lem:rand-stp-lips}
    For any \(\lambda>0\), any \(L_\phi>0\), and any \(\phi\in\lips[L_\phi]\) that is bounded from below, \(T^{\lambda}\phi\in\lips[\max\{\frac{L_f}{r-G},L_\phi\}]\).
\end{lemma}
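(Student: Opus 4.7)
The plan is to bound $|T^\lambda\phi(x) - T^\lambda\phi(y)|$ via a synchronous coupling. Let $X^x$ and $X^y$ denote the strong solutions of \eqref{eq:sde-uncontrolled} driven by a common Brownian motion $W$ and starting from $x$ and $y$ respectively, and fix any admissible ($\filt$-adapted, nonnegative) intensity $\pi$. Using the \emph{same} $\pi$ for both initial conditions causes the entropy terms $\lambda\RCal(\pi_t)$ to cancel in the difference, so that
\[
J^{\lambda,\phi}(1,x;\pi) - J^{\lambda,\phi}(1,y;\pi)
= \E{\int_0^\infty e^{-rt-\int_0^t\pi_s\dd{s}}\Bigl[\bigl(f(X^x_t)-f(X^y_t)\bigr) + \pi_t\bigl(\phi(X^x_t)-\phi(X^y_t)\bigr)\Bigr]\dd{t}}.
\]
Using $f\in\lips[L_f]$, $\phi\in\lips[L_\phi]$, and the pathwise envelope $M := \sup_{t\geq 0} e^{-Gt}|X^x_t - X^y_t|$, which by Lemma~\ref{lem:l1-error-propagation} satisfies $|X^x_t-X^y_t|\leq Me^{Gt}$ and $\E{M}\leq|x-y|$, we obtain $|J^{\lambda,\phi}(1,x;\pi) - J^{\lambda,\phi}(1,y;\pi)| \leq \E{M\cdot I(\pi)}$, where
\[
I(\pi):=\int_0^\infty e^{-(r-G)t - \Pi(t)}\bigl(L_f + \pi_t L_\phi\bigr)\dd{t},\qquad \Pi(t):=\int_0^t\pi_s\dd{s}.
\]

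The crux is a \emph{deterministic} pathwise bound on $I(\pi)$ that is uniform in $\pi$. Writing $I_0 := \int_0^\infty e^{-(r-G)t-\Pi(t)}\dd{t}$, we immediately have $I_0 \in [0, 1/(r-G)]$ since $\Pi\geq 0$ and $r-G>0$ by Assumption~\ref{assump:basic}(iii). For the $\pi$-dependent part, I will integrate by parts using $\dd{(e^{-\Pi(t)})} = -\pi_t e^{-\Pi(t)}\dd{t}$ together with the vanishing of $e^{-(r-G)t-\Pi(t)}$ at infinity (immediate from $r>G$), which yields
\[
\int_0^\infty e^{-(r-G)t-\Pi(t)}\pi_t\dd{t} = 1 - (r-G) I_0.
\]
Consequently $I(\pi) = L_\phi + \bigl(L_f - (r-G)L_\phi\bigr)I_0$, and a short case analysis on the sign of $L_f - (r-G)L_\phi$, together with $I_0 \in [0, 1/(r-G)]$, gives the clean bound $I(\pi) \leq \max\{L_f/(r-G), L_\phi\}$ for every realization of $\pi$.

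Pulling this deterministic bound out of $\E{M\cdot I(\pi)}$ and using $\E{M}\leq|x-y|$ yields $|J^{\lambda,\phi}(1,x;\pi) - J^{\lambda,\phi}(1,y;\pi)| \leq \max\{L_f/(r-G),L_\phi\}\,|x-y|$, uniformly in $\pi$. Inserting a near-optimal $\pi$ for $y$ to upper-bound $T^\lambda\phi(x) - T^\lambda\phi(y)$, then swapping roles and letting the approximation error vanish, transfers the Lipschitz constant to $T^\lambda\phi$ itself. The main obstacle I anticipate is purely technical: rigorously justifying the integration by parts (which reduces to showing the boundary term at infinity vanishes, guaranteed by $r>G$) and restricting to $\pi$ for which $J^{\lambda,\phi}(1,\cdot;\pi)$ is finite so that the coupling inequality is not vacuous---for any other $\pi$ the upper bound holds trivially and may be discarded.
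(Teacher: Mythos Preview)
Your proposal is correct and follows essentially the same route as the paper: couple $X^x$ and $X^y$ via the same Brownian motion, use the same intensity $\pi$ for both so the entropy terms cancel, invoke Lemma~\ref{lem:l1-error-propagation} to extract $|x-y|$, and then show the remaining time integral is bounded by $\max\{L_f/(r-G),L_\phi\}$ pathwise before passing to $\epsilon$-optimal controls. The only cosmetic difference is in how the integral bound is obtained: the paper writes the pointwise inequality $L_f+\pi_t L_\phi \le \max\{L_f/(r-G),L_\phi\}\,(r-G+\pi_t)$ and observes that $\int_0^\infty e^{-(r-G)t}p_t(r-G+\pi_t)\,\dd t=1$ directly as a total derivative, whereas you integrate by parts first to get $I(\pi)=L_\phi+(L_f-(r-G)L_\phi)I_0$ and then optimize over $I_0\in[0,1/(r-G)]$---these are the same computation in two orders.
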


Note that when \(\phi\in\lips\) bounded from below, the following conventional optimal control problem is well-posed, where for any \(x\in\RR\) and any \(p\in[0,1]\),
\begin{equation}\label{eq:reg-control-test}
\left\{
\begin{aligned}
    v^{\lambda,\phi}(p,x)=&\inf_{\pi}\E{\int_0^\infty e^{-rt}p_t\left(f(X_t)-\lambda\RCal(\pi_t)+\pi_t\phi(X_t)\right)\dd{t}}\\
    \text{subject to }\dd X_t=&b(X_t)\dd t+\sigma(X_t)\dd{W_t},\ \ X_0=x,\\
    \dd p_t=&-\pi_tp_t\dd{t},\ \ p_0=p.
\end{aligned}
\right.
\end{equation}
Moreover, we observe that for any \(x\in\RR\), \(T^\lambda\phi(x)=v^{\lambda,\phi}(1,x)\).
By the dynamic programming principle, the HJB associated with \eqref{eq:reg-control-test} is given by 
\begin{equation}
    \label{eq:hjb-reg-full}
    (\cL-r)v(p,x)+pf(x)+p\inf_{\pi\geq0}\left\{\pi(x)\left[\phi(x)-\partial_pv(p,x)\right]-\lambda\cR(\pi(x))\right\}=0,
\end{equation}
for \(v:[0,1]\times\RR\to\RR\), \(\forall(p,x)\in(0,1)\times\RR\). Observe that \(v^{\lambda,\phi}(p,x)=pT^{\lambda}\phi(x)\). Then from \eqref{eq:hjb-reg-full}, we deduce the HJB associated with \(T^{\lambda}\phi(x)\) as follows,
\begin{equation}
    \label{eq:hjb-regular-test}
    0=(\cL-r)v+f+\inf_{\pi\geq0}\left\{\pi(\phi-v)-\lambda\cR(\pi)\right\}\\
        =(\cL-r)v+f-\lambda e^{-\frac{\phi-v}{\lambda}},
\end{equation}
for \(v:\RR\to\RR\), and the infimum is attained at \(\bar\pi(x)=e^{-\frac{\phi(x)-v(x)}{\lambda}}\) for any \(x\in\RR\), due to the strong concavity of \(\cR\). 

We have the following verification theorem, establishing $T^\lambda \phi$ as a solution to the above HJB.
\begin{theorem}
    \label{thm:classical-control-sol}
    Fix any \(\lambda>0\) and any \(\phi\in\lips\) that is bounded from below, then the value function \(T^\lambda\phi\) in \eqref{eq:rand-stp} is the unique classical solution to the HJB equation \eqref{eq:hjb-regular-test} in \(\CCal^2\cap\lips\). Moreover, the intensity process \(\pi^*=\{\pi^*_t\}_{t\geq0}\) with \(\pi^*_t=\bar\pi(X_t)\) for all \(t\geq0\), where
    \begin{equation}
        \label{eq:opt-ctrl-regular-test}
        \bar\pi(x)=e^{-\frac{\phi(x)-T^\lambda\phi(x)}{\lambda}},\qquad\forall x\in\RR,
    \end{equation}
    is the corresponding optimal control policy such that
    \[T^\lambda\phi(x)=J^{\lambda,\phi}(x;\pi^*),\quad\forall x\in\RR.\]
\end{theorem}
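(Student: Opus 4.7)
My plan is verification-based: first establish existence of a classical solution $v \in \mathcal{C}^2 \cap \mathscr{L}$ to the HJB \eqref{eq:hjb-regular-test}, then show via an It\^{o} argument that any such $v$ must coincide with $T^{\lambda}\phi$ and that the feedback $\bar\pi$ attains the infimum in \eqref{eq:rand-stp}. Rewriting \eqref{eq:hjb-regular-test} as $\mathcal{L}v - rv = -f + \lambda e^{-(\phi - v)/\lambda}$, the nonlinearity in $v$ is nondecreasing and locally Lipschitz, and together with the uniform ellipticity from Assumption~\ref{ass:dynamic}-(ii) and the Lipschitz regularity of $b$, $\sigma$, $f$, $\phi$, the equation is a well-posed semi-linear elliptic ODE.

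For existence, I would solve the equation on bounded intervals $[-R, R]$ with Dirichlet data supplied by a priori sub- and super-solutions of linear growth, built from the Lipschitz constant of $\phi$ and the bound $L_f/(r-G)$ from Lemma~\ref{lem:rand-stp-lips}. Interior Schauder estimates yield $\mathcal{C}^{2,\alpha}_{\mathrm{loc}}$ regularity uniformly in $R$, after which a diagonal extraction produces a classical solution on $\mathbb{R}$; the monotone dependence of the semilinearity in $v$ supplies a comparison principle that makes the limit independent of the choice of barriers, and the Lipschitz bound passes to the limit.

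For verification, fix any $v \in \mathcal{C}^2 \cap \mathscr{L}$ solving \eqref{eq:hjb-regular-test} and any admissible nonnegative intensity $\pi$. Set $p_t = \exp\bigl(-\int_0^t \pi_s \, \mathrm{d}s\bigr)$. Applying It\^{o}'s formula to $t \mapsto e^{-rt} p_t v(X_t)$, substituting the HJB identity for $(\mathcal{L} - r)v$, and taking expectations along a localizing sequence $T_n = n \wedge \inf\{t \geq 0 : |X_t| \geq n\}$ leads, after passing to the limit, to
\begin{equation*}
J^{\lambda,\phi}(x;\pi) - v(x) = \E[x]{\int_0^\infty e^{-rt} p_t \bigl(\pi_t(\phi(X_t) - v(X_t)) + \lambda e^{-(\phi(X_t) - v(X_t))/\lambda} - \lambda \mathcal{R}(\pi_t)\bigr) \dd t}.
\end{equation*}
A one-variable calculation shows that $F(p, u) := pu + \lambda e^{-u/\lambda} - \lambda \mathcal{R}(p) \geq 0$ for all $p \geq 0$, $u \in \mathbb{R}$, with equality if and only if $p = e^{-u/\lambda}$; this follows from the first-order condition in $p$ together with the strict concavity of $\mathcal{R}$. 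Hence $J^{\lambda,\phi}(x;\pi) \geq v(x)$ with equality precisely at $\pi_t = \bar\pi(X_t)$. This simultaneously identifies $v = T^{\lambda}\phi$, exhibits $\pi^* = \{\bar\pi(X_t)\}_{t \geq 0}$ as the optimizer in \eqref{eq:rand-stp}, and gives uniqueness in $\mathcal{C}^2 \cap \mathscr{L}$.

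The main obstacle I expect lies in Step~1 together with the transversality bookkeeping in Step~2. For the latter, the passage $n \to \infty$ needs $\E[x]{e^{-rT_n} p_{T_n} v(X_{T_n})} \to 0$; since $v \in \mathscr{L}$ and $p_t \leq 1$, this reduces to the decay of $\E[x]{e^{-rt}(1 + |X_t|)}$, which is supplied by Lemma~\ref{lem:l1-error-propagation} together with the discount condition Assumption~\ref{assump:basic}-\ref{discount} ($r > G$), and the stochastic integral is upgraded from local to true martingale via the boundedness of $v'\sigma$ on each $[-n, n]$. The more delicate piece is the construction of a Lipschitz a priori bound for the boundary-value solutions uniformly in $R$; I would lean on Lemma~\ref{lem:rand-stp-lips} as the target Lipschitz constant and design affine super/sub-solutions compatible with the discount condition to close this step.
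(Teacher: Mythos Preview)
Your proposal is correct and follows essentially the same two-step architecture as the paper: existence of a classical solution on $\mathbb{R}$ via bounded-domain approximations with interior elliptic estimates and a compactness extraction, followed by a verification argument based on It\^{o}'s formula applied to $e^{-rt}p_t v(X_t)$ and the pointwise identity $\inf_{\pi\ge0}\{\pi u-\lambda\mathcal{R}(\pi)\}=-\lambda e^{-u/\lambda}$ (your function $F$). The only noteworthy technical divergence is in how the existence step handles the unbounded nonlinearity and the a priori growth bound: the paper truncates the exponential via a cutoff $\phi_M$, invokes quasilinear existence on $(-N,N)$ with boundary data $\phi$, uses a maximum-principle argument to get an $M$-independent $L^\infty$ bound, and then uses $W^{2,q}$ interior estimates plus Sobolev embedding; you instead invoke affine sub/super-solutions as barriers and Schauder estimates. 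Both routes are standard and lead to the same $\mathcal{C}^{2,\alpha}_{\mathrm{loc}}$ limit with linear growth, so the difference is cosmetic.
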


\subsection{The Compound Optimal Stopping Operator}
Composed with \(\nonlocal^{\lambda_2}\) and \(T^{\lambda_1}\), the properties presented in \Cref{subsec:rand-nonlocal,subsec:rand-stop} lead to the following properties of \(\stp^{\blambda}\). These properties lay the foundation for the validity of Definition~\ref{defn:rand-impulse}.
\begin{lemma}\label{lem:stp-lambda-properties}
The compound optimal stopping operator $\stp^{\blambda}$ satisfies the following properties:

\begin{enumerate}[label=\textup{(\roman*)},ref=Lemma~\ref{lem:stp-lambda-properties}-(\roman*)]
    \item     \label{lem:rand-increasing}
    \(\stp^{\blambda}\) is non-decreasing, i.e., for any \(\phi_1,\phi_2\in\test\) with \(\phi_1\leq\phi_2\), \(\stp^{\blambda}\phi_1\leq\stp^{\blambda}\phi_2\).
    \item      \label{lem:rand-lips}
    For any \(\blambda\in(0,+\infty)^2\) and any test function \(\phi\in\lips\) that is bounded from below, \(\stp^{\blambda}\phi\in\lips[\frac{L_f}{r-G}]\).
    \item     \label{lem:global-rand-nonexp}
    For any \(\blambda\in(0,+\infty)^2\) and any test function \(\phi_1,\phi_2\in\lips\) that are bounded from below, \(\|\stp^{\blambda}\phi_1-\stp^{\blambda}\phi_2\|_{L^\infty(\RR)}\leq\|\phi_1-\phi_2\|_{L^\infty(\RR)}\). 
\end{enumerate}

\end{lemma}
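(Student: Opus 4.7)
The plan is to derive all three properties by composition: since $\stp^{\blambda}=T^{\lambda_1}\circ\nonlocal^{\lambda_2}$, each property will be inherited from the analogous property of the two constituent operators analyzed in \Cref{subsec:rand-nonlocal,subsec:rand-stop}. A preliminary observation is that $\nonlocal^{\lambda_2}\phi$ belongs to the domain on which $T^{\lambda_1}$ was analyzed, namely $\lips$ restricted to functions bounded from below. Lipschitz continuity with constant $L_l$ is Lemma~\ref{lem:rand-nonloc-attain}; the lower bound follows from the defining infimum in \eqref{eq:rand-nonloc} since $\phi\ge\inf\phi>-\infty$ and $l\ge K$ pointwise while the KL divergence is non-negative, so every admissible $\mu$ contributes at least $\inf\phi+K$ to the infimand. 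Hence $\stp^{\blambda}\phi$ is well-defined.

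For \textup{(i)}, monotonicity of $\nonlocal^{\lambda_2}$ is \ref{increase}, while monotonicity of $T^{\lambda_1}$ is directly visible from \eqref{eq:rand-stp-cost}: because $\pi_t\ge 0$ and the survival factor $e^{-\int_0^t r+\pi_s\,\dd{s}}$ is positive, the integrand $f(X_t)-\lambda_1\RCal(\pi_t)+\pi_t\phi(X_t)$ is pointwise nondecreasing in $\phi$, and taking infimum over admissible $\pi$ preserves the order. Composing the two gives \textup{(i)}. For \textup{(ii)}, Lemma~\ref{lem:rand-nonloc-attain} yields $\nonlocal^{\lambda_2}\phi\in\lips[L_l]$ bounded from below, after which Lemma~\ref{lem:rand-stp-lips} applies with $L_\phi=L_l$ to deliver $\stp^{\blambda}\phi\in\lips[\max\{L_f/(r-G),L_l\}]$. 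By \ref{discount}, $r-G<L_f/L_l$, so $L_f/(r-G)>L_l$ and the maximum collapses to $L_f/(r-G)$.

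The substantive step is \textup{(iii)}. The $L^\infty$ non-expansion of $\nonlocal^{\lambda_2}$ is \ref{global-non-exp}, so it suffices to establish the analogous property for $T^{\lambda_1}$. Given two admissible inputs $\psi_1,\psi_2$, let $\pi^*$ be the optimizer associated with $\psi_2$ produced by Theorem~\ref{thm:classical-control-sol}, and use it as a suboptimal candidate for $\psi_1$ to obtain
\[
T^{\lambda_1}\psi_1(x)-T^{\lambda_1}\psi_2(x)\le \E[x]{\int_0^\infty e^{-\int_0^t r+\pi^*_s\,\dd{s}}\pi^*_t\bigl(\psi_1(X_t)-\psi_2(X_t)\bigr)\dd{t}}.
\]
Bounding $|\psi_1-\psi_2|$ by $\|\psi_1-\psi_2\|_{L^\infty(\RR)}$, the task reduces to the hazard-rate estimate $\E{\int_0^\infty e^{-rt}\pi^*_t e^{-\int_0^t\pi^*_s\,\dd{s}}\,\dd{t}}\le 1$. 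Pathwise, $\pi^*_t e^{-\int_0^t\pi^*_s\,\dd{s}}=-\tfrac{d}{dt}e^{-\int_0^t\pi^*_s\,\dd{s}}$, and integration by parts against $e^{-rt}$ combined with Fubini produces $1-r\E{\int_0^\infty e^{-rt}e^{-\int_0^t\pi^*_s\,\dd{s}}\,\dd{t}}\le 1$. Swapping the roles of $\psi_1,\psi_2$ delivers the two-sided non-expansion for $T^{\lambda_1}$, and composing with the non-expansion of $\nonlocal^{\lambda_2}$ yields \textup{(iii)}.

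The only genuinely delicate point is the hazard-rate bound above: since $\pi^*$ is $\filt$-adapted and only non-negative with possibly irregular sample paths, the integration-by-parts identity must be interpreted pathwise almost surely, and Fubini is invoked to exchange expectation and the time integral, which is justified by non-negativity of the integrand together with the finiteness guaranteed by Theorem~\ref{thm:classical-control-sol}. Parts \textup{(i)} and \textup{(ii)} amount to bookkeeping on top of the already-established component lemmas.
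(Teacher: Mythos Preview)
Your proposal is correct and follows essentially the same approach as the paper: all three parts are reduced to the corresponding properties of $\nonlocal^{\lambda_2}$ and $T^{\lambda_1}$ via composition, invoking Lemma~\ref{lem:rand-nonloc-attain}, Lemma~\ref{lem:rand-stp-lips}, \ref{increase}, \ref{global-non-exp}, and \ref{discount} exactly as the paper does. The only cosmetic difference is in (iii): you invoke the optimal intensity $\pi^*$ from Theorem~\ref{thm:classical-control-sol} to get the one-sided bound, whereas the paper bounds the cost difference for an \emph{arbitrary} $\pi$ and then passes to the infimum via an $\epsilon$-suboptimal argument (as in the proof of Lemma~\ref{lem:rand-stp-lips}); both routes rely on the same hazard-rate identity $\int_0^\infty e^{-rt}p_t\pi_t\,\dd{t}\le 1$.
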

\ref{lem:rand-increasing} directly follows from \ref{increase} and its proof is therefore omitted.



\section{HJB Characterization of Randomized Impulse Control Problem and Verification Theorem}\label{sec:HJB-vrf}
In this section, we provide the HJB characterization \eqref{eq:inf_st_HJB} for the randomized impulse control solution in Definition \ref{defn:rand-impulse} under a fixed randomization parameter \(\blambda=(\lambda_1,\lambda_2)\in(0,+\infty)^2\). With the help of an alternative randomization scheme \eqref{eq:mdp-jump-diff}, we then provide a verification theorem for \eqref{eq:inf_st_HJB}, which leads to the uniqueness of solution in Definition~\ref{defn:rand-impulse}. The proofs in this section follow standard arguments using dynamic programming principle and It\^{o}'s formula, therefore they are omitted here and can be found in \cite[Appendix A.2]{cao2025two}.
\subsection{HJB Characterization}\label{subsec:HJB}
According to the compound optimal stopping operator defined in \eqref{eq:inf-randomized-impulse-control}, we have \(\stp^{\blambda}\phi=T^{\lambda_1}\left[\nonlocal^{\lambda_2}\phi\right]\) for any \(\phi\in\test\).
Following \eqref{eq:hjb-regular-test} and \eqref{eq:opt-ctrl-regular-test}, we can derive the HJB equation associated with \(T^{\lambda_1}\left[\nonlocal^{\lambda_2}\phi\right]\),
\begin{equation}
    \label{eq:hjb-aux}
0=(\cL-r)v+f+\inf_{\pi\geq0}\left\{\pi\left(\nonlocal^{\lambda_2}\phi-v\right)-\lambda_1\cR(\pi)\right\}\\
        =(\cL-r)v+f-\lambda_1e^{-\frac{\nonlocal^{\lambda_2}\phi-v}{\lambda_1}},
\end{equation}
for function \(v:\RR\to\RR\). Accordingly, the optimal feedback policy \(\pi^*\) is given by
\begin{equation}
    \label{eq:opt-ctrl-aux}
    \pi^*(x)=e^{-\frac{\nonlocal^{\lambda_2}\phi-\stp^{\blambda}\phi}{\lambda_1}}.
\end{equation}
By Definition~\ref{defn:rand-impulse}, if a function \(\psi^{\blambda}:\RR\to\RR\) is a value function for the randomized impulse control problem, then it must satisfy the fixed-point condition,
\(\psi^{\blambda}=\stp^{\blambda}\psi^{\blambda}=T^{\lambda_1}\left[\nonlocal^{\lambda_2}\psi^{\blambda}\right]\).
Thus, by \eqref{eq:hjb-aux}, we have the following HJB equation associated with \(\psi^{\blambda}\),
\begin{equation}
    \label{eq:inf_st_HJB}\tag{HJB}
    (\cL-r)v+f-\lambda_1e^{-\frac{\nonlocal^{\lambda_2}v-v}{\lambda_1}}=0,
\end{equation}
for \(v:\RR\to\RR\). From \eqref{eq:opt-ctrl-aux}, we know that the optimal feedback policy associated with \(\psi^{\blambda}\) is
\begin{equation}
    \label{eq:opt-ctrl}
    \pi^*(x)=e^{-\frac{\nonlocal^{\lambda_2}\psi^{\blambda}-\psi^{\blambda}}{\lambda_1}}.
\end{equation}
\begin{theorem}
    \label{thm:fx-pnt-hjb-sol}
    If a function \(V:\RR\to\RR\) is a value function for the randomized impulse control problem according to Definition~\ref{defn:rand-impulse}, then \(V\in\CCal^2\) is also a classical solution to \eqref{eq:inf_st_HJB}.
\end{theorem}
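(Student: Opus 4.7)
The plan is to observe that the conclusion is essentially a direct consequence of the verification result for the inner randomized stopping operator (Theorem~\ref{thm:classical-control-sol}) applied to the test function $\phi = \nonlocal^{\lambda_2} V$, combined with the fixed-point identity in Definition~\ref{defn:rand-impulse}.

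First I would unpack what the hypothesis gives us. By Definition~\ref{defn:rand-impulse}, $V \in \lips$, $V$ is bounded from below, and
\[
V \;=\; \stp^{\blambda} V \;=\; T^{\lambda_1}\!\bigl[\nonlocal^{\lambda_2} V\bigr].
\]
So $V$ coincides with the value function of the randomized optimal stopping problem \eqref{eq:rand-stp} associated with the obstacle $\phi = \nonlocal^{\lambda_2} V$.

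Next I would check that $\phi = \nonlocal^{\lambda_2} V$ satisfies the hypotheses of Theorem~\ref{thm:classical-control-sol}. By \ref{lem:rand-nonloc-attain} (Lemma~\ref{lem:rand-nonloc-attain}), since $V \in \lips$ is bounded from below, $\nonlocal^{\lambda_2} V \in \lips[L_l]$. Moreover $\nonlocal^{\lambda_2} V$ is itself bounded from below: using the trivial admissible choice $\mu = \Phi_x$ gives a finite upper bound on the infimum, while the lower bound $\nonlocal^{\lambda_2} V(x) \ge \inf V + K$ follows from $l \ge K$ and nonnegativity of the KL divergence. So $\phi = \nonlocal^{\lambda_2} V$ lies in $\lips$ and is bounded below, exactly the setting of Theorem~\ref{thm:classical-control-sol}.

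Applying Theorem~\ref{thm:classical-control-sol} with this $\phi$ then yields that $T^{\lambda_1}\phi$ is the unique classical solution in $\CCal^2 \cap \lips$ to
\[
(\LCal - r)v + f - \lambda_1 e^{-\frac{\phi - v}{\lambda_1}} = 0.
\]
Substituting $v = T^{\lambda_1}\phi = V$ and $\phi = \nonlocal^{\lambda_2} V$ gives $V \in \CCal^2$ and
\[
(\LCal - r)V + f - \lambda_1 e^{-\frac{\nonlocal^{\lambda_2} V - V}{\lambda_1}} = 0,
\]
which is exactly \eqref{eq:inf_st_HJB}. There is no real obstacle here; the only care needed is to verify the input regularity of $\nonlocal^{\lambda_2} V$ so that Theorem~\ref{thm:classical-control-sol} is actually applicable, and that is precisely what Lemma~\ref{lem:rand-nonloc-attain} delivers.
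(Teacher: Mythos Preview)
Your proposal is correct and follows essentially the same approach as the paper: use the hypotheses of Definition~\ref{defn:rand-impulse} to ensure $\nonlocal^{\lambda_2}V$ is Lipschitz and bounded below, then apply Theorem~\ref{thm:classical-control-sol} to the stopping problem with obstacle $\phi=\nonlocal^{\lambda_2}V$ and substitute the fixed-point identity $V=T^{\lambda_1}[\nonlocal^{\lambda_2}V]$. Your citation of Lemma~\ref{lem:rand-nonloc-attain} for the regularity of $\nonlocal^{\lambda_2}V$ is in fact the right one.
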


\subsection{Verification Theorem}
We will provide a verification theorem for \eqref{eq:inf_st_HJB} through an alternative randomization scheme of impulse control.

\subsubsection{Connection with Randomization via Controlled Compound Poisson Process}
In this section, we discuss an alternative approach to introduce randomness to the classical impulse control policy under some fixed \(\blambda=(\lambda_1,\lambda_2)\in(0,+\infty)^2\). 

Recall the optimal impulse control policy characterized by \eqref{eq:classical-stp}--\eqref{eq:classical-jump}. We can write the corresponding control process as a jump process \(Y^*=\{Y^*_t\}_{t\geq0}\), where 
\begin{equation}
    \label{eq:classical-jump-process}
    Y^*_t=\sum_{n=1}^{N^*_t}\xi^*_n,\ \ \text{where}\ \ N^*_t=\sum_{n\geq1}^\infty\mathbbm{1}\{\tau_n^*\leq t\},\quad\forall t\geq0.
\end{equation}
Here, \(\{(\tau^*_n,\xi^*_n)\}_{n\geq1}\) is given by \eqref{eq:classical-stp}--\eqref{eq:classical-jump}. The deterministic nature of \(Y^*\) is characterized as follows: in \eqref{eq:classical-jump-process}, the associated counting process \(N^*\) is represented by the sequence of arrival times \(\{\tau^*_n\}_{n\geq1}\), which is of a deterministic feedback form; the sequence of jumps \(\{\xi^*_n\}_{n\geq1}\) is also of a deterministic feedback form. 

Inspired by \cite{denkert2025control}, we embed the impulse mechanism via a controlled compound Poisson process. 
Consider \(\pi=\{\pi_t\}_{t\geq0}\) such that \(\pi_t\geq0\) for all \(t\geq0\), and let \(N^\pi=\{N^\pi_t\}_{t\geq0}\) be a nonhomogeneous Poisson process with intensity \(\pi\). Let \(\eta=\{\eta_n\}_{n=1}^\infty\) be a sequence of random jumps. Define the following jump process \(Y^{\pi,\bmu}=\{Y^{\pi,\bmu}_t\}_{t\geq0}\) such that 
\begin{equation}
    \label{eq:compound-poisson}
    Y^{\pi,\bmu}_t=\sum_{n=1}^{N^\pi_t}\eta_n,\qquad\forall t\geq0.
\end{equation}
Here, \(\bmu=\{\mu_t\}_{t\geq0}\) is a flow of probability measures in \(\pcls{\RR}\) such that for any open set \(U\subset \RR\) and any \(0\leq t_1<t_2\),
\[\EE\left[\sum_{t_1<s\leq t_2}\mathbbm{1}\left\{\Delta Y^{\pi,\bmu}_s\in U\right\}\right]=\EE\left[\int_{t_1}^{t_2}\int_{U}Y^{\pi,\bmu}(\dd s,\dd \xi)\right]=\int_{t_1}^{t_2}\mu_s(U)\pi_s\dd{s},\]
with some abuse of notation that \(Y^{\pi,\bmu}(ds,d\xi)\) denotes the corresponding random measure. We then consider the following controlled state process,
\begin{equation}
    \label{eq:jump-diff}
    X_{t}=X_{0-}+\int_{0}^{t} b(X_{s-})\dd{s}+\int_{0}^{t}\sigma(X_{s-})\dd W_{s}+\int_0^t\int_{\RR}\xi Y^{\pi,\bmu}(\dd s,\dd \xi),\quad\forall t\geq0.
\end{equation}
The infinitesimal generator with respect to \eqref{eq:jump-diff} is given by
\begin{equation}
    \label{eq:jump-diff-inf-gen}
    \begin{aligned}
        \hat{\cL}^{\pi,\bmu}\phi(t,x)&=\partial_t\phi(t,x)+\cL\phi(t,x)+\pi_t\E[\xi\sim\mu_t]{\phi(t,x+\xi)-\phi(t,x)}, 
    \end{aligned}
\end{equation}
for any test function \(\phi:[0,+\infty)\times \RR\), \((t,x)\in(0,+\infty)\times\RR\). 

Let \(\cA\) denote the set of admissible pairs \((\pi,\bmu)\) such that for all \(t\geq0\), \(\pi_t=\pi(t,X_{t-})\) for some \(\pi:[0,+\infty)\times\RR\to[0,+\infty)\), and \(\mu_t=\bmu[t,X_{t-}]\) such that \(\bmu[t,y]\in\pcls{\RR}\) with \(\bmu[t,y]\ll\Phi_y\) for all \((t,y)\in[0,+\infty)\times\RR\); for such \(\bmu\), let \(\brho[t,y]=\frac{d\bmu[t,y]}{d\Phi_y}\) and denote \(\brho=\{\rho_t\}_{t\geq0}\) with \(\rho_t=\brho[t,X_{t-}]\), for all \(t\geq0\) and \(y\in\RR\). Later we may use \(\bmu\) and \(\brho\) interchangeably. For any \((\pi,\bmu)\in\cA\), define the cost function \(J^{\blambda}\) with respect to the state process \eqref{eq:jump-diff} as follows,
\[\begin{aligned}
J^{\blambda}(x;\pi,\bmu)&=\EE_x\Big[\int_0^{\infty}e^{-rt}\Big\{f(X_t)-\lambda_1\cR(\pi_t)\\
&\hspace{50pt} +\int_{\RR}\left[l(\xi)-\lambda_2\cH(\mu_t;X_{t-})\right]Y^{\pi,\bmu}(\dd{t},\dd{\xi})\Big\}\dd{t}\Big],\quad\forall x\in\RR,
\end{aligned}\]
with \(\cH(\nu;x)=-D_{KL}(\nu\Vert\Phi_x)\) for any \(\nu\in\pcls{\RR}\) such that \(\nu\ll\Phi_x\). Observe that $\cR$ and $\cH$ serve as the entropy regularization to encourage randomization of the counting process and sequence of jumps, respectively. 
We are interested in the optimal control problem,
\begin{equation}
    \label{eq:value-jump-diff}
    \tilde\psi^{\blambda}(x)=\inf_{(\pi,\bmu)\in\cA}J^{\blambda}(x;\pi,\bmu),
\end{equation}
subject to \eqref{eq:jump-diff} with \(X_{0-}=x\) for any \(x\in\RR\). 

Notice that \(\kl{\bmu[t,y]}{\Phi_y}=\E[\zeta\sim\Phi_y]{\brho[t,y](\zeta)\log{\brho[t,y](\zeta)}}\) for all \(t\geq0\) and \(y\in\RR\). Then, the cost function can be rewritten as
\[\begin{aligned}
&J^{\blambda}(x;\pi,\bmu)\\
=&\E[x]{\int_0^{\infty}e^{-rt}\left\{f(X_t)-\lambda_1\cR(\pi_t)+\pi_t\left[\E[\xi\sim\mu_t]{l(\xi)}-\lambda_2\cH(\mu_{t};X_{t-})\right]\right\}\dd{t}}\\
=&\E[x]{\int_0^{\infty}e^{-rt}\left(f(X_t)-\lambda_1\cR(\pi_t)+\pi_t\E[\zeta\sim\Phi_{X_{t-}}]{\rho_{t}(\zeta)\left(l(\zeta)+\lambda_2\log{\rho_{t}(\zeta)}\right)}\right)\dd{t}}.
\end{aligned}\]
Therefore, the optimal control problem \eqref{eq:value-jump-diff} is equivalent to
\begin{equation}
    \label{eq:mdp-jump-diff}
    \begin{aligned}
       \tilde{\psi}^{\blambda}(x)&=\inf_{(\pi,\rho)\in\cA}\EE\Big[\int_0^{\infty}e^{-rt}\Big(f(X_t)-\lambda_1\cR(\pi_t)\\
       &\hspace{80pt}+\pi_t\E[\xi\sim\Phi_{X_{t-}}]{\rho_{t}(\xi)\left(l(\xi)+\lambda_2\log{\rho_{t}(\xi)}\right)}\Big)\dd{t}\Big],
    \end{aligned}
\end{equation}
subject to \eqref{eq:jump-diff} with \(X_{0-}=x\) for any \(x\in\RR\). 

We have the following equivalence between the fixed-point characterization of randomized impulse control given by Definition \eqref{defn:rand-impulse} and the control problem \eqref{eq:mdp-jump-diff} via their associated HJB's.
\begin{lemma}
    \label{lem:equiv-hjb}
    The HJB equation associated with \eqref{eq:mdp-jump-diff} coincides with \eqref{eq:inf_st_HJB}.
\end{lemma}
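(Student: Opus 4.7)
The plan is to derive the HJB associated with the controlled jump-diffusion problem \eqref{eq:mdp-jump-diff} from the dynamic programming principle applied with generator \eqref{eq:jump-diff-inf-gen}, and then to perform the two inner infima in the order $\rho$ first, $\pi$ second. The $\rho$-minimization will turn the compensator of the jump term, combined with the KL-regularized jump cost, into $\nonlocal^{\lambda_2}v$ via the Gibbs variational identity already used in Lemma~\ref{lem:rand-nonloc-attain}; the $\pi$-minimization is then the same soft-min that appears in \eqref{eq:hjb-regular-test}. Carrying out these two substitutions in sequence delivers \eqref{eq:inf_st_HJB} termwise.

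More concretely, a formal DPP applied to \eqref{eq:mdp-jump-diff} yields, for a candidate value function $v(x)$,
\[
0 = \cL v - rv + f + \inf_{\pi\geq 0,\,\rho}\Big\{\pi\E[\zeta\sim\Phi_x]{\rho(\zeta)\bigl(v(x+\zeta)-v(x)+l(\zeta)+\lambda_2\log\rho(\zeta)\bigr)} - \lambda_1\cR(\pi)\Big\},
\]
where I have used the identity $\E[\xi\sim\mu]{v(x+\xi)}=\E[\zeta\sim\Phi_x]{\rho(\zeta)v(x+\zeta)}$ to merge the compensator of the jump term from $\hat{\cL}^{\pi,\bmu}$ with the KL-regularized integral. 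For each fixed $\pi\geq 0$, the inner $\rho$-infimum is the standard KL-penalized linear minimization; its optimum equals $-\lambda_2\log\E[\zeta\sim\Phi_x]{e^{-(v(x+\zeta)+l(\zeta))/\lambda_2}}$, which by \eqref{eq:rand-nonloc-value} is exactly $\nonlocal^{\lambda_2}v(x)$. Substituting back collapses the equation to
\[
0 = \cL v - rv + f + \inf_{\pi\geq 0}\Bigl\{\pi\bigl(\nonlocal^{\lambda_2}v - v\bigr) - \lambda_1\cR(\pi)\Bigr\},
\]
and the remaining one-dimensional soft-min over $\pi$ has already been resolved in the derivation of \eqref{eq:hjb-regular-test}, producing $-\lambda_1 e^{-(\nonlocal^{\lambda_2}v - v)/\lambda_1}$ and thus recovering \eqref{eq:inf_st_HJB} verbatim.

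Because the lemma only identifies two PDEs rather than asserting that $\tilde\psi^{\blambda}$ actually solves either of them, no verification-theorem machinery is required at this stage. The main obstacle, as it were, is to justify that the inner $\rho$-minimization genuinely attains its Gibbs value: one must confirm that $\rho^*(\zeta)\propto e^{-(v(x+\zeta)+l(\zeta))/\lambda_2}$ defines a valid probability density against $\Phi_x$ with tails tight enough for all integrals at play to converge and for the interchange of infimum and expectation to be legitimate. This is exactly what Lemma~\ref{lem:rand-nonloc-attain} establishes under the Lipschitz and bounded-below structure imposed on $v$, delivering the minimizer in $\subG{}$; no further argument is then needed.
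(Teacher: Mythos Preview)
Your proposal is correct and follows essentially the same route as the paper: derive the formal HJB from the dynamic programming principle with generator \eqref{eq:jump-diff-inf-gen}, split the joint infimum by first minimizing over $\rho$ (which yields $\nonlocal^{\lambda_2}v-v$ by the very definition \eqref{eq:rand-nonloc-rho}), and then carry out the $\pi$-soft-min to obtain \eqref{eq:inf_st_HJB}. The only cosmetic difference is that the paper justifies the interchange by ``linearity in $\pi$'' and appeals directly to the definition of $\nonlocal^{\lambda_2}$, so your invocation of Lemma~\ref{lem:rand-nonloc-attain} for the Gibbs minimizer is not actually needed at this purely formal PDE-identification stage.
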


\subsubsection{Verification Theorem}
With the alternative randomization framework \eqref{eq:mdp-jump-diff} and the equivalence relation given by Lemma \ref{lem:equiv-hjb}, we present the following verification theorem for
\eqref{eq:inf_st_HJB}.
\begin{theorem}
    \label{thm:rand-verification}
    Suppose \(V\in\CCal^2\cap\lips\) is a classical solution to \eqref{eq:inf_st_HJB} that is bounded from below. Then \(V=\tilde{\psi}^{\blambda}\) as in \eqref{eq:mdp-jump-diff}.
\end{theorem}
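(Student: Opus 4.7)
I would adopt the classical two-step verification pattern: first, for every admissible $(\pi,\bmu)\in\cA$, establish $V(x)\le J^{\blambda}(x;\pi,\bmu)$, yielding $V\le\tilde{\psi}^{\blambda}$; second, construct a feedback optimizer $(\pi^{*},\bmu^{*})\in\cA$ at which the preceding inequality becomes an equality, giving $V\ge\tilde{\psi}^{\blambda}$. The workhorse is It\^{o}'s formula for jump-diffusions applied to $e^{-rt}V(X_t)$ along the controlled state \eqref{eq:jump-diff} with generator $\hat{\cL}^{\pi,\bmu}$ in \eqref{eq:jump-diff-inf-gen}, localized by $\tau_n := \inf\{t\ge 0 : |X_t|\ge n\}$ so that both the Brownian and compensated-Poisson-measure martingales have zero expectation.

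\textbf{Pointwise dualities from the HJB.} Let $u := \nonlocal^{\lambda_2}V - V$. The HJB \eqref{eq:inf_st_HJB} together with strict concavity of $\cR(x)=x-x\log x$ supplies the Legendre duality $\lambda_1 e^{-u/\lambda_1} = \sup_{\pi'\ge 0}\{-\pi' u + \lambda_1\cR(\pi')\}$; hence $(r-\cL)V \le f + \pi' u - \lambda_1 \cR(\pi')$ pointwise for every $\pi'\ge 0$, with equality at $\pi' = e^{-u/\lambda_1}$. Simultaneously, the variational characterization of $\nonlocal^{\lambda_2}$ in \Cref{lem:rand-nonloc-attain} gives $u(y) \le \E[\xi\sim\mu]{V(y+\xi) - V(y) + l(\xi)} + \lambda_2 \kl{\mu}{\Phi_y}$ for every $\mu \ll \Phi_y$, with equality at the Gibbs measure $d\mu/d\Phi_y \propto e^{-(V(y+\cdot)+l(\cdot))/\lambda_2}$. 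Substituting these two bounds into the localized It\^{o} identity for $e^{-rt}V(X_t)$ produces
\begin{equation*}
V(x) \le \E{e^{-r(T\wedge\tau_n)}V(X_{T\wedge\tau_n})} + \E{\int_0^{T\wedge\tau_n} e^{-rt}\Bigl(f(X_{t-}) - \lambda_1\cR(\pi_t) + \pi_t\bigl[\E[\xi\sim\mu_t]{l(\xi)} + \lambda_2\kl{\mu_t}{\Phi_{X_{t-}}}\bigr]\Bigr)\dd{t}},
\end{equation*}
whose right-hand side is precisely the cost in \eqref{eq:mdp-jump-diff} truncated at $T\wedge\tau_n$. Assuming $J^{\blambda}(x;\pi,\bmu)<\infty$ (else the bound is vacuous), I would send $n,T\to\infty$: monotone/dominated convergence sends the integral to $J^{\blambda}(x;\pi,\bmu)$, while $V\in\lips$ combined with Assumption~\ref{ass:dynamic} and the discount condition \ref{discount} drive the boundary term to zero. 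Taking infima yields $V\le\tilde{\psi}^{\blambda}$. For the reverse inequality, choose $\pi^{*}(x) := e^{-u(x)/\lambda_1}$ and $\bmu^{*}[x]$ as the Gibbs measure above; \Cref{lem:rand-nonloc-attain} yields $\bmu^{*}[x]\in\subG{}$, and \Cref{thm:classical-control-sol} applied with $\phi = \nonlocal^{\lambda_2}V$ guarantees $\pi^{*}$ is well-defined and the feedback SDE \eqref{eq:jump-diff} admits a strong solution, so $(\pi^{*},\bmu^{*})\in\cA$. Under this pair both pointwise inequalities become equalities, the It\^{o} computation reproduces equality throughout, and $V(x) = J^{\blambda}(x;\pi^{*},\bmu^{*})$ follows, closing the verification.

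\textbf{Main obstacle.} The central difficulty is transversality—showing $\lim_{T\to\infty} \E{e^{-rT}V(X_T)} = 0$ for a general admissible $(\pi,\bmu)$, since $\pi_t$ is only required non-negative and measurable and can in principle drive super-linear moment growth of $X$ under \eqref{eq:jump-diff}, for which a naive Gr\"{o}nwall estimate fails. My intended remedy is to exploit that finite cost forces $\E{\int_0^\infty e^{-rt}\cR(\pi_t)\dd{t}} < \infty$, which via the entropy-like structure $\cR(x) = x-x\log x$ yields an $L\log L$-type bound on $\pi_t$; combined with the sub-Gaussian control of admissible $\mu_t$ afforded by \Cref{lem:rand-nonloc-attain} and the discount strictness $r > G$ from \ref{discount}, this should deliver the dominated-convergence passage. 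A secondary hurdle is admissibility of the feedback optimizer: since $\pi^{*}(x) = \exp(-u(x)/\lambda_1)$ with merely Lipschitz $u$ is only locally bounded, I would use the upper bound $u \le l(0) = K$ (take $\xi = 0$ in the infimum defining $\nonlocal^{\lambda_2}$) to obtain a uniform lower bound $\pi^{*}\ge e^{-K/\lambda_1}$, and defer the upper tail to a standard localization argument in the strong-existence proof for \eqref{eq:jump-diff}.
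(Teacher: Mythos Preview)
Your proposal is correct and follows essentially the same route as the paper: It\^{o}'s formula for the jump-diffusion applied to $e^{-rt}V(X_t)$, combined with the pointwise HJB inequality (which the paper packages as Lemma~\ref{lem:equiv-hjb}), then the feedback pair $\bigl(\pi^{*}(x)=e^{-u(x)/\lambda_1},\,\bmu^{*}[x]\propto e^{-(V(x+\cdot)+l(\cdot))/\lambda_2}\,d\Phi_x\bigr)$ to close the reverse direction. Your treatment is in fact more careful than the paper's, which asserts the Brownian integral is a martingale and that $\E[x]{e^{-rt}V(X_t)}\to 0$ ``by \ref{discount}'' without localization or a moment argument for the controlled jump-diffusion; you correctly flag transversality as the genuine technical point.

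One small slip: the bound $u = \nonlocal^{\lambda_2}V - V \le l(0)=K$ that you obtain by ``taking $\xi=0$'' is an argument for the \emph{classical} nonlocal operator $\nonlocal$, not for $\nonlocal^{\lambda_2}$. In the randomized operator you cannot substitute a Dirac mass, and any approximating measure concentrated near $0$ incurs a KL penalty $\kl{\mu}{\Phi_x}$ that depends on $x$ (through the mean $-x$ of $\Phi_x$), so a uniform-in-$x$ upper bound on $u$ does not follow this way. This only affects your secondary discussion of a uniform lower bound for $\pi^{*}$, not the core verification argument.
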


\Cref{thm:fx-pnt-hjb-sol} and \ref{thm:rand-verification} immediately lead to the following uniqueness result.
\begin{theorem}
    \label{thm:unique}
    There exists at most one value function \(\psi^{\blambda}\in\lips\) with a finite lower bound according to Definition~\ref{defn:rand-impulse}.
\end{theorem}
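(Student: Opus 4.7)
The proof is essentially a direct chaining of the two results established immediately before the statement, so the plan is short. Suppose, for contradiction (or just for the direct argument), that $V_1, V_2 \in \lips$ are two value functions satisfying Definition~\ref{defn:rand-impulse}, both bounded from below. The goal is to show $V_1 \equiv V_2$ on $\RR$ by funneling each through the equivalent compound-Poisson randomization value $\tilde{\psi}^{\blambda}$ defined in \eqref{eq:mdp-jump-diff}.

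The first step is to verify that each $V_i$ satisfies the hypotheses of the verification theorem, Theorem~\ref{thm:rand-verification}. By hypothesis $V_i \in \lips$ and is bounded from below, so the only additional regularity to check is $V_i \in \CCal^2$. This is exactly the content of Theorem~\ref{thm:fx-pnt-hjb-sol}: any function satisfying the fixed-point condition $V_i = \stp^{\blambda} V_i$ in Definition~\ref{defn:rand-impulse} is automatically a classical $\CCal^2$ solution of \eqref{eq:inf_st_HJB}. Thus both $V_1$ and $V_2$ meet the regularity, growth, and lower-boundedness requirements of Theorem~\ref{thm:rand-verification}.

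The second step then applies Theorem~\ref{thm:rand-verification} twice, once to $V_1$ and once to $V_2$, yielding
\begin{equation*}
V_1(x) = \tilde{\psi}^{\blambda}(x) = V_2(x), \qquad \forall x \in \RR,
\end{equation*}
where $\tilde{\psi}^{\blambda}$ is the value of the optimal control problem \eqref{eq:mdp-jump-diff} formulated via the controlled compound Poisson process \eqref{eq:jump-diff}. Since $\tilde{\psi}^{\blambda}$ is defined independently of any particular candidate solution, the identification forces $V_1 = V_2$ pointwise, which is the desired uniqueness.

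There is no genuine obstacle in this plan beyond careful bookkeeping of the function-class hypotheses: one must confirm that the Lipschitz and lower-boundedness properties coming from Definition~\ref{defn:rand-impulse} are precisely those invoked in Theorem~\ref{thm:rand-verification}, and that the $\CCal^2$ regularity handed down by Theorem~\ref{thm:fx-pnt-hjb-sol} is the classical (not merely viscosity) regularity required by the verification argument. Once these alignments are noted, the proof is a one-line composition of the two theorems; no further estimates or comparison principles are needed.
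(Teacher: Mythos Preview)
Your proposal is correct and matches the paper's own argument essentially verbatim: the paper simply states that Theorem~\ref{thm:fx-pnt-hjb-sol} and Theorem~\ref{thm:rand-verification} immediately yield uniqueness, which is exactly the two-step composition you describe. No additional ingredients are needed.
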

Theorem \ref{thm:unique} guarantees that the solution to the randomized impulse control framework introduced in Definition \ref{defn:rand-impulse} is free from ambiguity.

\section{Solving Randomized Impulse Control Problem via an Iterative Approach}\label{sec:existence}
In this section, we provide a constructive proof for the existence of solution to the randomized impulse control problem specified in Definition \ref{defn:rand-impulse} under a fixed \(\blambda\in(0,+\infty)^2\). Its building block is an iterative approach mirroring the
classical Howard iteration \cite{howard1960dynamic}. The construction of the iterates consists of two key components, namely the ``policy evaluation'' and the ``policy improvement''. Tailored to our randomized impulse control problem, these two components are achieved through the compound optimal stopping operator \(\stp^{\blambda}\) defined in \eqref{eq:inf-randomized-impulse-control}. We show that this iterative approach leads to a limiting function, which is indeed a value function under Definition~\ref{defn:rand-impulse}. The proofs are deferred to Appendix~\ref{app:a-2}.

\subsection{Construction of the iterates}\label{subsec:iter-construction}
At the initial step \(n=0\), we adopt a conservative policy, simply not intervening at all. That is, \(\pi^0_t\equiv0\) for all \(t\geq0\). Correspondingly, the total cost \(\psi^{\blambda,0}:\RR\to\RR\) is defined as
\begin{equation}
    \label{eq:policy_improvement_psi0}
    \psi^{\blambda,0}(x) = \EE\left[\int_0^\infty e^{-rt}f(X_t)\dd t \Big| X_0=x\right],\quad \forall x\in\RR.
\end{equation}
Starting with \(\psi^{\blambda,0}\), we continue the construction recursively via \(\stp^{\blambda}\), namely for any \(n\in\NN^+\),
\begin{equation}\label{eq:policy_improvement_psin_value_definition}
\begin{aligned}
    \psi^{\blambda,n}(x)&=\stp^{\blambda}\psi^{\blambda,n-1}(x)\\
    &= \inf_\pi\E[x]{\int_0^\infty e^{-rt} p_t\left(f(X_t)+\nonlocal^{\lambda_2}\psi^{\blambda,n-1} (X_t) \pi_t-\lambda_1\cR(\pi_t) \right)\dd t},
\end{aligned}
\end{equation}
subject to \eqref{eq:sde-uncontrolled} and \eqref{eq:survival} with \(p_0=1\). 

Intuitively, the sequence $\{\psi^{\blambda,n}\}_{n\ge 0}$ is constructed via the compound randomized
optimal stopping operator $\stp^{\blambda}$. At each step, $\psi^{\blambda,n}$ and $\pi^n$
respectively represent the value function and the optimal stopping rule
for the randomized optimal stopping problem with terminal cost $\nonlocal^{\lambda_2}\psi^{\blambda,n-1}$.
Since an optimal stopping problem only solves for the optimal intervention time, not for the jump size, the policy iteration naturally
acts on the survival rate $\pi^n$.
By contrast, the optimal jump-size distribution $\mu$ is already available in closed
form \eqref{eq:rand-nonloc-value} thanks to the regularization via KL divergence, and hence no additional inner optimization over distributions is required.

Furthermore, we have the following property for the iterates \(\psi^{\blambda,n}\)'s.
\begin{lemma}\label{lemma:uniform_lipschitz}
For any \(n\in\NN\), $\psi^{\blambda,n}\in\lips[\frac{L_f}{r-G}]$.
\end{lemma}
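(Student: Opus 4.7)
The plan is to prove the lemma by induction on $n \in \NN$. Because Lemma~\ref{lem:stp-lambda-properties}-(ii) requires its input to be a function in $\lips$ that is bounded from below, but does not itself conclude any lower bound on its output, I strengthen the inductive hypothesis to: ``$\psi^{\blambda,n} \in \lips[\frac{L_f}{r-G}]$ and there exists $m_n \in \RR$ with $\psi^{\blambda,n} \geq m_n$ on $\RR$.''

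For the base case $n = 0$, the nonnegativity of $f$ from Assumption~\ref{assump:basic}(i) gives $\psi^{\blambda,0} \geq 0$ directly from \eqref{eq:policy_improvement_psi0}. For the Lipschitz estimate, I would combine $L_f$-Lipschitz continuity of $f$ with Fubini and the $L^1$-stability bound in Lemma~\ref{lem:l1-error-propagation} (which implies $\EE[|X^x_t - X^y_t|] \leq e^{Gt}|x-y|$ for any $t \geq 0$) to obtain
\[
|\psi^{\blambda,0}(x) - \psi^{\blambda,0}(y)| \leq L_f \int_0^\infty e^{-rt}\,\EE[|X^x_t - X^y_t|]\dd t \leq L_f|x-y|\int_0^\infty e^{-(r-G)t}\dd t = \frac{L_f}{r-G}|x-y|,
\]
where convergence of the final integral uses $r - G > 0$ from Assumption~\ref{assump:basic}(iii).

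For the inductive step, suppose the strengthened hypothesis holds at level $n$. Since $\psi^{\blambda,n} \in \lips$ and $\psi^{\blambda,n} \geq m_n$, Lemma~\ref{lem:stp-lambda-properties}-(ii) applies and immediately yields $\psi^{\blambda,n+1} = \stp^{\blambda}\psi^{\blambda,n} \in \lips[\frac{L_f}{r-G}]$. It remains to exhibit a finite lower bound for $\psi^{\blambda,n+1}$. From $l \geq K$ in Assumption~\ref{assump:basic}(ii) together with the defining formula \eqref{eq:rand-nonloc} (or equivalently \eqref{eq:rand-nonloc-value}), one has $\nonlocal^{\lambda_2}\psi^{\blambda,n}(x) \geq m_n + K$ for all $x \in \RR$. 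Substituting this into the integrand in the definition \eqref{eq:inf-randomized-impulse-control} of $\stp^{\blambda}\psi^{\blambda,n}$ and using $f \geq 0$, the integrand is pointwise no smaller than $\pi_t(m_n+K) - \lambda_1\cR(\pi_t)$. Minimizing the convex map $\pi \mapsto \pi(m_n+K) - \lambda_1\cR(\pi)$ over $\pi \geq 0$ yields the value $-\lambda_1 e^{-(m_n+K)/\lambda_1}$. Combining this pointwise lower bound with $\int_0^\infty e^{-\int_0^t(r+\pi_s)\dd s}\dd t \leq 1/r$, which follows from $\pi_s \geq 0$, I conclude
\[
\psi^{\blambda,n+1}(x) \geq -\frac{\lambda_1}{r}\, e^{-(m_n+K)/\lambda_1} =: m_{n+1},
\]
closing the induction.

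The main technical point, and the reason for strengthening the inductive hypothesis, is that Lemma~\ref{lem:stp-lambda-properties}-(ii) does not automatically propagate the boundedness-from-below condition, so the inductive argument must separately track a pointwise lower bound for each iterate via explicit control of the entropy-regularized integrand as sketched above; once this is in place, the Lipschitz constant estimate is essentially free from the preceding lemmas.
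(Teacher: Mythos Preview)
Your proof is correct and follows essentially the same inductive route as the paper: the base case via Lemma~\ref{lem:l1-error-propagation} is identical, and the inductive step is the same application of $\stp^{\blambda}=T^{\lambda_1}\circ\nonlocal^{\lambda_2}$ preserving the Lipschitz constant (the paper unpacks Lemma~\ref{lem:stp-lambda-properties}-(ii) into its ingredients, Lemma~\ref{lem:rand-nonloc-attain} and Lemma~\ref{lem:rand-stp-lips}, whereas you cite it directly). Your explicit propagation of a finite lower bound $m_n$ is in fact more careful than the paper's own proof, which tacitly assumes the ``bounded from below'' hypothesis needed to invoke those lemmas; the paper elsewhere asserts the natural lower bound $0$ without justification, so your strengthened induction cleanly closes that small gap.
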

From  Lemma~\ref{lemma:uniform_lipschitz}, we know that \(\psi^{\blambda,n},\nonlocal^{\lambda_2}\psi^{\blambda,n}\in\lips[\frac{L_f}{r-G}]\) with a natural lower bound \(0\) for all \(n\in\NN\). Therefore by Theorem~\ref{thm:classical-control-sol}, for \(n\geq1\), \(\psi^{\blambda,n}\) in \eqref{eq:policy_improvement_psin_value_definition} is the unique classical solution in \(\CCal^2\) to the following HJB equation,
\begin{equation}\label{eq:value_improvement}
\begin{aligned}
     0 &=\cL\psi^{\blambda,n}-r\psi^{\blambda,n}+f+\inf_\pi\left\{\left(\nonlocal^{\lambda_2}\psi^{\blambda,n-1}-\psi^{\blambda,n}\right)\pi-\lambda_1\cR(\pi)\right\}\\
      &=\cL\psi^{\blambda,n}-r\psi^{\blambda,n}+f-\lambda_1e^{-\frac{\nonlocal^{\lambda_2}\psi^{\blambda,n-1}(x)-\psi^{\blambda,n}(x)}{\lambda_1}}.
\end{aligned}
\end{equation}
In addition, the corresponding optimal policy $\pi^n$ is given by
\begin{equation}\label{eq:policy_improvement}
    \pi^{n}(x)=e^{-\frac{\nonlocal^{\lambda_2}\psi^{\blambda,n-1}(x)-\psi^{\blambda,n}(x)}{\lambda_1}};
\end{equation}
denote \(p^n_t=e^{-\int_0^t\pi^n_s\dd{s}}\) for any \(t\geq0\).

\subsection{Convergence of the Iterates}
The next result shows that adopting the policy in \eqref{eq:policy_improvement} is indeed a policy improvement step, and correspondingly, the policy evaluation step according to \eqref{eq:policy_improvement_psin_value_definition} will eventually converge to a Lipschitz-continuous function. 
\begin{theorem}\label{thm:policy_improvement}
For \(\{\psi^{\blambda,n}\}_{n\geq0}\) constructed in Section \ref{subsec:iter-construction}, 
\begin{enumerate}[label=\textup{(\roman*)},ref=\ref{thm:policy_improvement}-(\roman*)]
    \item\label{improve}$\psi^{\blambda,n+1}\leq\psi^{\blambda,n}$ for all \(n\in\NN\);
    \item\label{pnt-limit}\(\exists\hat{\psi}^{\blambda}\in\lips[\frac{L_f}{r-G}]\) such that \(\psi^{\blambda,n}\xrightarrow{n\to\infty}\hat{\psi}^{\blambda}\) pointwise. Moreover, $\psi^{\blambda,n}$ converges to $\hat{\psi}^{\blambda}$ locally uniformly.
\end{enumerate}
\end{theorem}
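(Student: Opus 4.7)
The plan is to establish (i) by induction via the monotonicity of $\stp^{\blambda}$, and then derive (ii) by combining (i) with the uniform Lipschitz bound from Lemma~\ref{lemma:uniform_lipschitz} and a uniform-in-$n$ lower bound on the iterates.

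For (i), the base case $\psi^{\blambda,1}\leq\psi^{\blambda,0}$ is obtained by noting that the trivial intensity $\pi\equiv 0$ is admissible in the definition of $\psi^{\blambda,1}=\stp^{\blambda}\psi^{\blambda,0}=T^{\lambda_1}\bigl[\nonlocal^{\lambda_2}\psi^{\blambda,0}\bigr]$. Using $\cR(0)=0$, this choice produces the cost $\E[x]{\int_0^\infty e^{-rt}f(X_t)\dd{t}}=\psi^{\blambda,0}(x)$, so taking the infimum over $\pi$ yields $\psi^{\blambda,1}(x)\leq\psi^{\blambda,0}(x)$. The inductive step is immediate from Lemma~\ref{lem:rand-increasing}: if $\psi^{\blambda,n}\leq\psi^{\blambda,n-1}$, then monotonicity of $\stp^{\blambda}$ forces $\psi^{\blambda,n+1}=\stp^{\blambda}\psi^{\blambda,n}\leq\stp^{\blambda}\psi^{\blambda,n-1}=\psi^{\blambda,n}$.

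For (ii), part (i) makes $\{\psi^{\blambda,n}(x)\}_{n\geq 0}$ a decreasing real sequence at every $x\in\RR$, so the pointwise limit $\hat{\psi}^{\blambda}(x):=\lim_{n}\psi^{\blambda,n}(x)\in[-\infty,\psi^{\blambda,0}(x)]$ is well defined. To force $\hat{\psi}^{\blambda}$ to be real-valued, I would prove by induction on $n$ a uniform lower bound $\psi^{\blambda,n}(0)\geq -C$; by Lemma~\ref{lemma:uniform_lipschitz} this transfers to a pointwise lower bound $\psi^{\blambda,n}(x)\geq -C-\tfrac{L_f}{r-G}|x|$ valid for all $n$. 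Two enabling estimates are available: the inequality $\nonlocal^{\lambda_2}\phi(x)\geq K+\inf_{y}\phi(y)$ (immediate from $l\geq K$ and non-negativity of the KL-divergence), and the bound $T^{\lambda_1}\phi(x)\geq -\lambda_1/r$ whenever $\phi\geq 0$ (using $\cR\leq 1$ together with $\int_0^\infty e^{-rt}p_t\dd{t}\leq 1/r$). Chaining these two estimates starting from $\psi^{\blambda,0}\geq 0$ yields the desired uniform lower bound under a mild size condition. Then by passing the Lipschitz constant to the limit in the inequality $|\psi^{\blambda,n}(x)-\psi^{\blambda,n}(y)|\leq\tfrac{L_f}{r-G}|x-y|$, I obtain $\hat{\psi}^{\blambda}\in\lips[\frac{L_f}{r-G}]$.

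For local uniform convergence on any $K\Subset\RR$, I would appeal to Dini's theorem: $\{\psi^{\blambda,n}\}_{n\geq 0}$ is a sequence of continuous functions decreasing pointwise to the continuous limit $\hat{\psi}^{\blambda}$, which upgrades pointwise convergence to uniform convergence on $K$. Equivalently, the uniform equi-Lipschitz property combined with pointwise convergence delivers the same conclusion via Arzel\`a--Ascoli. The main obstacle is the uniform lower bound: the non-expansiveness from Lemma~\ref{lem:global-rand-nonexp} controls only consecutive differences, not absolute scale, and one must carefully guard against the interaction of the entropy term $-\lambda_1\cR(\pi_t)$ and the coupling $\pi_t\nonlocal^{\lambda_2}\psi^{\blambda,n-1}(X_t)$ driving the iterates to $-\infty$. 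Once this step is handled, the Lipschitz passage to the limit and the upgrade to locally uniform convergence are essentially routine.
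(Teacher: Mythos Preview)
Your proposal follows the paper's argument closely: part~(i) by induction (base case with $\pi\equiv0$, inductive step via monotonicity of $\stp^{\blambda}$), and part~(ii) via a uniform lower bound, passage of the Lipschitz constant to the limit, and Dini's theorem for local uniform convergence. The paper writes the inductive step for~(i) slightly differently---it plugs in the optimal control $\pi^n$ from the $n$th problem and then uses monotonicity of $\nonlocal^{\lambda_2}$---but this is equivalent to your direct appeal to \ref{lem:rand-increasing}.

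Where you and the paper part ways is the lower bound in~(ii). The paper simply asserts $0$ as a ``natural lower bound'' for every $\psi^{\blambda,n}$ and moves on. You are right to flag this as the crux, but your proposed chaining has a concrete gap: the estimate $\nonlocal^{\lambda_2}\phi(x)\geq K+\inf_y\phi(y)$ is useful only when $\inf_y\phi(y)>-\infty$, whereas a bound on $\psi^{\blambda,n-1}(0)$ together with the uniform Lipschitz constant gives only the $x$-dependent envelope $\psi^{\blambda,n-1}(x)\geq -C-\tfrac{L_f}{r-G}|x|$, so $\inf_y\psi^{\blambda,n-1}(y)=-\infty$ and the two estimates cannot be chained as written. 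What rescues the situation is the re-centering built into $\nonlocal^{\lambda_2}$: by the formula in Lemma~\ref{lem:rand-nonloc-attain}, $\nonlocal^{\lambda_2}\phi(x)=-\lambda_2\log\E[\zeta\sim\cN(0,1)]{e^{-(\phi(\zeta)+l(\zeta-x))/\lambda_2}}$ depends on $\phi$ only through its values near the origin, so a Lipschitz $\phi$ with $\phi(0)\geq -C$ already yields a \emph{global} lower bound for $\nonlocal^{\lambda_2}\phi$. Alternatively, the PDE comparison used later in the paper (lower-bound step in the proof of Theorem~\ref{thm:comparison_convergence}) gives $\psi^{\blambda,n}\geq\psi^n-\lambda_1/r\geq-\lambda_1/r$ uniformly in $n$ and $x$, with no size restriction on $\blambda$. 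Either repair closes your induction; the remainder of your argument then matches the paper exactly.
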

Next, we show that such a pointwise limit \(\hat{\psi}^{\blambda}\) actually corresponds to the randomized impulse control value function.
\begin{theorem}
    \label{thm:iter-value-fct}
    The pointwise limit \(\hat{\psi}^{\blambda}\) in Theorem~\ref{thm:policy_improvement} is a value function for the randomized impulse control problem as in Definition~\ref{defn:rand-impulse}. The associated optimal policy \((\pi^*=\{\pi^*_t\}_{t\geq0},\mu^*=\{\mu^*_x\}_{x\in\RR})\) is given by
    \(\mu^*_x\in\pcls{\RR}\) such that
    \[\rho^*_x(\xi)=\frac{d\mu^*_x(\xi)}{d\Phi_x(\xi)}=\frac{e^{-\frac{\hat{\psi}^{\blambda}(x+\xi)+l(\xi)}{\lambda_2}}}{\E[\zeta\sim\Phi_x]{e^{-\frac{\hat{\psi}^{\blambda}(x+\zeta)+l(\zeta)}{\lambda_2}}}},\quad\forall x\in\RR,\]
    and
    \(\pi^*_t=\bar\pi(X_t)\) for any \(t\geq0\) with
    \(\bar\pi=e^{-\frac{\nonlocal^{\lambda_2}\hat{\psi}^{\blambda}-\hat{\psi}^{\blambda}}{\lambda_1}}.\)
\end{theorem}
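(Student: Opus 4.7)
\textbf{Plan for proving Theorem~\ref{thm:iter-value-fct}.} The strategy is to verify that $\hat{\psi}^{\blambda}$ satisfies all conditions in Definition~\ref{defn:rand-impulse}. Theorem~\ref{thm:policy_improvement} already delivers $\hat{\psi}^{\blambda}\in\lips[\frac{L_f}{r-G}]$; a lower bound follows from the nonnegativity of $\psi^{\blambda,0}$ (since $f\geq 0$) together with a uniform-in-$n$ lower estimate of $\psi^{\blambda,n}$ obtained from the iteration formula \eqref{eq:policy_improvement_psin_value_definition} by choosing the trivial policy $\pi\equiv 0$ as a competitor (which recovers $\psi^{\blambda,0}$ as an upper bound) and combining with the representation in Theorem~\ref{thm:classical-control-sol}. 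The heart of the proof is the fixed-point identity $\hat{\psi}^{\blambda}=\stp^{\blambda}\hat{\psi}^{\blambda}$, which is established by passing to the limit in $\psi^{\blambda,n}=\stp^{\blambda}\psi^{\blambda,n-1}$.

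The key technical step is to upgrade the locally uniform convergence $\psi^{\blambda,n}\to\hat{\psi}^{\blambda}$ from Theorem~\ref{thm:policy_improvement}\ref{pnt-limit} into \emph{global} uniform convergence after one application of $\nonlocal^{\lambda_2}$. For this I invoke the refined estimate in Lemma~\ref{lem:rand-nonloc-attain}: for any $R>0$,
\[
\|\nonlocal^{\lambda_2}\psi^{\blambda,n}-\nonlocal^{\lambda_2}\hat{\psi}^{\blambda}\|_{L^\infty(\RR)}
\leq \|\psi^{\blambda,n}-\hat{\psi}^{\blambda}\|_{L^\infty([-R,R])}+\mathscr{E}_R.
\]
Sending $n\to\infty$ and then $R\to\infty$ gives $\nonlocal^{\lambda_2}\psi^{\blambda,n}\to\nonlocal^{\lambda_2}\hat{\psi}^{\blambda}$ uniformly on $\RR$. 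The non-expansiveness of $T^{\lambda_1}$ in sup-norm (a direct consequence of the variational representation \eqref{eq:rand-stp}--\eqref{eq:rand-stp-cost} since $\phi$ enters linearly and $\mathbb{E}[\int_0^\infty e^{-\int_0^t(r+\pi_s)ds}\pi_t\,dt]\leq 1$) then propagates this into
\[
\stp^{\blambda}\psi^{\blambda,n-1}=T^{\lambda_1}[\nonlocal^{\lambda_2}\psi^{\blambda,n-1}]\longrightarrow T^{\lambda_1}[\nonlocal^{\lambda_2}\hat{\psi}^{\blambda}]=\stp^{\blambda}\hat{\psi}^{\blambda}
\]
uniformly on $\RR$. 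Since the left-hand side equals $\psi^{\blambda,n}$ and already converges pointwise to $\hat{\psi}^{\blambda}$, the desired fixed-point identity $\hat{\psi}^{\blambda}=\stp^{\blambda}\hat{\psi}^{\blambda}$ follows.

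Once the fixed-point property is in hand, the optimal policy is read off directly. By Lemma~\ref{lem:rand-nonloc-attain} applied to $\phi=\hat{\psi}^{\blambda}\in\lips$, the infimum in \eqref{eq:rand-nonloc} is attained at the unique measure $\mu^*_x$ with density $\rho^*_x(\xi)\propto e^{-(\hat{\psi}^{\blambda}(x+\xi)+l(\xi))/\lambda_2}$ against $\Phi_x$, which is exactly the stated $\mu^*$. Applying Theorem~\ref{thm:classical-control-sol} with test function $\nonlocal^{\lambda_2}\hat{\psi}^{\blambda}\in\lips[L_l]$ shows that the unique optimal intensity for $T^{\lambda_1}[\nonlocal^{\lambda_2}\hat{\psi}^{\blambda}]=\hat{\psi}^{\blambda}$ is the feedback $\bar{\pi}(x)=\exp(-(\nonlocal^{\lambda_2}\hat{\psi}^{\blambda}(x)-\hat{\psi}^{\blambda}(x))/\lambda_1)$, giving $\pi^*_t=\bar{\pi}(X_t)$ as claimed.

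\textbf{Main obstacle.} The principal difficulty is the passage to the limit inside the compound operator. The non-expansiveness in Lemma~\ref{lem:global-rand-nonexp} requires global uniform convergence of the argument, whereas Theorem~\ref{thm:policy_improvement} only provides local uniform convergence of $\psi^{\blambda,n}$; without the refined non-expansiveness estimate in Lemma~\ref{lem:rand-nonloc-attain} (which exploits the light Gaussian tails of the reference measure $\Phi_x$ to absorb the error from $|\xi|$ large, uniformly over Lipschitz test functions in a bounded class), one cannot bridge this gap. All other pieces --- verifying regularity/lower boundedness, invoking the representation theorems for $\nonlocal^{\lambda_2}$ and $T^{\lambda_1}$, and computing the feedback form of the optimizers --- are comparatively direct consequences of results already established in Sections~\ref{sec:prelim} and \ref{sec:HJB-vrf}.
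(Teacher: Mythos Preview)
Your proposal is correct and relies on the same key ingredient as the paper: the refined estimate in Lemma~\ref{lem:rand-nonloc-attain}, which converts the merely locally uniform convergence $\psi^{\blambda,n}\to\hat{\psi}^{\blambda}$ into globally uniform convergence of $\nonlocal^{\lambda_2}\psi^{\blambda,n}\to\nonlocal^{\lambda_2}\hat{\psi}^{\blambda}$. The only difference is in packaging---you pass the limit through $\stp^{\blambda}=T^{\lambda_1}\circ\nonlocal^{\lambda_2}$ directly via the sup-norm non-expansiveness of $T^{\lambda_1}$, whereas the paper splits the fixed-point identity into two inequalities (one via sub-optimality of the optimal intensity $\hat\pi$ for $\stp^{\blambda}\hat{\psi}^{\blambda}$ plugged into the $n$-th iteration, the other via monotonicity of $\stp^{\blambda}$)---but the substance is the same.
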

Theorem~\ref{thm:policy_improvement} and \ref{thm:iter-value-fct} guarantee that the iterative approach with initialization \eqref{eq:policy_improvement_psi0} and recursive relation \eqref{eq:policy_improvement_psin_value_definition} indeed converges to the value function of the randomized impulse control problem according to Definition \ref{defn:rand-impulse}. Together with the uniqueness result in Theorem~\ref{thm:unique}, we conclude that the randomized impulse control problem via the compound optimal stopping operator \eqref{eq:inf-randomized-impulse-control} is well-defined for any given \(\blambda\in(0,+\infty)^2\).

\section{Convergence of the Randomized Problem to the Classical Problem}\label{sec:rand2classical}

In \Cref{sec:HJB-vrf,sec:existence}, we have shown the well-definedness of the randomized impulse control problem in Definition~\ref{defn:rand-impulse} for any given randomization parameter \(\blambda\). 
In this section, we focus on the relation between this \emph{randomized} problem and the \emph{classical} impulse control problem \eqref{eq:obj-classical}. In particular, we establish a convergence result for the
\emph{randomized} impulse–control problem to its
\emph{classical} counterpart as \(\blambda\) approaches \((0,0)\).

Let us first introduce the sequence of functions $\{\psi^n\}_{n \geq 0}$, representing the classical policy iterates. This sequence converges to the classical solution $\psi$, which solves~\eqref{eq:inf-impulse-classical}. Assume that $\psi^0$ is the value function where no intervention is made. Namely, $\psi^0\equiv\psi^{\blambda,0}$ as defined in \eqref{eq:policy_improvement_psi0}. 
Starting with \(\psi^{0}\), we recursively construct the iteration with $\stp$ (recall \eqref{eq:inf-classical-impulse-control}). For any \(n\in\NN^+\),
\begin{equation}\label{eq:policy_improvement_psin_value_definition_classical}
    {\psi^{n}(x)=\stp\psi^{n-1}(x) = \inf_\tau\E[x]{\int_0^\tau e^{-rt} f(X_t)\dd t+e^{-r\tau}\nonlocal\psi^{n-1} (X_\tau) }},
\end{equation}
subject to \eqref{eq:sde-uncontrolled} and $X_0=x$.

Equivalently, one can show by classical results \cite{bensoussan2011applications,pham2009continuous} that when $n\in \NN^+$, 
$\psi^n$'s are the unique viscosity solution to the corresponding HJB equation:
\begin{equation}\label{eq:classical_PI_hjb}
\min\{\cL\psi^n-r\psi^n+f, \nonlocal\psi^{n-1}-\psi^n\}=0.
\end{equation}

We first state the following assumption, which will be active from now on.  

\begin{assumption}\label{assump:boundedness}
We impose the following boundedness conditions:
\begin{enumerate}[label=\textup{(\roman*)}]
    \item The coefficients satisfy $\|b\|_{L^\infty(\RR)} + \|\sigma\|_{L^\infty(\RR)} + \|f\|_{L^\infty(\RR)} \leq M$.\label{assump:bounded_coeffs}
    \item The continuation regions $\mathfrak{C}^n$ of the policy iterates $\{\psi^n\}_{n \geq 1}$ are uniformly bounded; that is, there exists a bounded set $W \Subset \RR$ such that $\mathfrak{C}^n \subset W$ for all $n$.\label{assump:bounded_continuation_region}
\end{enumerate}
\end{assumption}

The first condition is a common convention in the literature on impulse control problems \cite{reisinger2019penalty,reisinger2020error}.  
The second condition is satisfied in many practical settings. For instance, both our example in \Cref{subsec:experiments} and the numerical study in \cite{reisinger2020error} meet this requirement.

Under Assumption~\ref{assump:boundedness}, one can show that the classical policy iterates $\{\psi^n\}_{n\geq 0}$ locally converges to the classical solution $\psi$ at a geometric rate.
\begin{proposition}[{\cite[Theorem 3.4]{reisinger2019penalty}}  and {\cite[Proposition 4.1]{reisinger2020error}}]\label{propn:classical_iterates_bound}
For any $n\in\NN^+$, let $\psi$ and $\psi^n$ be the unique viscosity solution to \eqref{eq:inf-impulse-classical} and \eqref{eq:classical_PI_hjb} respectively. Then there exist constants $\mu\in(0,1]$ and $C> 0$ such that 
\[0\leq \psi^n-\psi\leq C(1-\mu)^n, \quad n\in\NN.\]
Consequently, the iterates $\{\psi^n\}_{n\geq 0}$ are bounded uniformly in $n$.
\end{proposition}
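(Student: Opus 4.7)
The plan is to use a standard policy-iteration contraction argument tailored to the compound stopping operator $\stp$, relying critically on the uniform boundedness of the continuation regions.

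First, I would establish the monotonicity sandwich $\psi \leq \psi^{n+1} \leq \psi^n$ by induction. The lower bound base case $\psi^0 \geq \psi$ is immediate because $\psi^0$ is the total discounted cost of the no-intervention strategy, which is admissible for the classical problem \eqref{eq:obj-classical}. The inductive step uses the monotonicity of $\stp$ (analogous to \ref{increase}/\ref{lem:rand-increasing}) together with the fixed-point property $\psi = \stp\psi$: if $\psi^{n-1} \geq \psi$, then $\psi^n = \stp\psi^{n-1} \geq \stp\psi = \psi$. Monotonically downward, $\psi^1 = \stp\psi^0 \leq \psi^0$ since $\tau \equiv \infty$ is feasible in the optimal-stopping problem defining $\stp\psi^0$, and the same induction propagates.

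Second, I would establish the geometric contraction. Let $\tau^\star$ be the classical optimal stopping time, namely the first exit from $\mathfrak{C}$ for the dynamics \eqref{eq:sde-uncontrolled}. Using $\tau^\star$ as an admissible (not necessarily optimal) strategy in $\stp\psi^n$ gives
\[
    \psi^{n+1}(x) - \psi(x) \;\leq\; \E[x]{e^{-r\tau^\star}\bigl[\nonlocal\psi^n(X_{\tau^\star-}) - \nonlocal\psi(X_{\tau^\star-})\bigr]} \;\leq\; \|\psi^n - \psi\|_{L^\infty(\RR)} \cdot \E[x]{e^{-r\tau^\star}},
\]
where the second inequality uses that $\nonlocal$ is a sup-norm nonexpansion (by its defining infimum structure \eqref{eq:nonloc}). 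For $x \in \mathfrak{C}$, which lies in the bounded set $W$ by Assumption~\ref{assump:boundedness}-(ii), the nondegeneracy of $\sigma$ from Assumption~\ref{ass:dynamic} combined with bounded Lipschitz coefficients (Assumption~\ref{assump:boundedness}-(i)) yields a uniform exit-time potential bound $\sup_{x\in\mathfrak{C}}\E[x]{e^{-r\tau^\star}} \leq 1 - \mu$ for some $\mu\in(0,1]$. For $x \notin \mathfrak{C}$ one has $\tau^\star = 0$ and the impulse relation $\psi(x) = \nonlocal\psi(x) = \psi(x + \xi^\star(x)) + l(\xi^\star(x))$ gives $\psi^{n+1}(x) - \psi(x) \leq \psi^n(x+\xi^\star(x)) - \psi(x+\xi^\star(x))$, with the post-impulse state landing back in $\overline{\mathfrak{C}} \subset W$; the contraction thus propagates from points on $\mathfrak{C}$ to the entire real line. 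Iterating yields $\|\psi^{n+1} - \psi\|_{L^\infty(\RR)} \leq (1-\mu)\|\psi^n - \psi\|_{L^\infty(\RR)}$, hence $0 \leq \psi^n - \psi \leq C(1-\mu)^n$ with $C = \|\psi^0 - \psi\|_{L^\infty(\RR)}$.

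Third, the uniform boundedness of $\{\psi^n\}$ is now automatic from the sandwich $0 \leq \psi \leq \psi^n \leq \psi^0$, since $f \geq 0$ forces $\psi \geq 0$, and Assumption~\ref{assump:boundedness}-(i) forces $\|\psi^0\|_{L^\infty(\RR)} \leq \|f\|_{L^\infty(\RR)}/r \leq M/r$.

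The main technical obstacle is the uniform exit-time estimate that produces the contraction factor $1-\mu$. Without the uniform bound $\mathfrak{C}^n \subset W$ imposed in Assumption~\ref{assump:boundedness}-(ii), one could construct sequences of initial points from which the expected discount factor $\E[x]{e^{-r\tau^\star}}$ approaches $1$ (either because the continuation region is unbounded or because impulses map back into regions where $\psi^n - \psi$ is not contracted), destroying the geometric rate. The delicate bookkeeping that ensures the contraction survives both the post-impulse jumps and the boundary of $\mathfrak{C}$ is precisely the content of \cite{reisinger2019penalty,reisinger2020error}, which I would cite rather than reproduce in full.
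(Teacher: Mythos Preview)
The paper omits the proof entirely, deferring to \cite{reisinger2019penalty,reisinger2020error} on the grounds that the argument carries over verbatim; your proposal is therefore strictly more detailed than the paper's treatment, and your concluding sentence (citing those same references for the bookkeeping) is in fact all the paper does. Your sketch of the contraction argument via the optimal stopping time $\tau^\star$ and the nonexpansiveness of $\nonlocal$ is the standard route and is essentially correct.

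One minor point worth flagging: your exit-time bound is formulated for the limiting continuation region $\mathfrak{C}$, whereas Assumption~\ref{assump:boundedness}\ref{assump:bounded_continuation_region} is stated for the iterate regions $\mathfrak{C}^n$. You would either need to argue that $\mathfrak{C}\subset W$ as well (e.g.\ by passing to the limit), or reorganize the contraction so that the discount factor is picked up on $\mathfrak{C}^n$ rather than $\mathfrak{C}$. This is a bookkeeping adjustment rather than a structural gap, and is exactly the kind of detail you correctly anticipate delegating to the cited works.
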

Given the similarity in the problem settings, the proof of Proposition~\ref{propn:classical_iterates_bound} is essentially the same as that of \cite[Proposition 4.1]{reisinger2020error}, therefore omitted.

Also, one can prove that the iterates $\{\psi^n\}_{n\geq 0}$ have a uniform Lipschitz constant.

\begin{lemma}\label{lemma:uniform_lipschitz_classical_iterates}
For any $n\in\NN$, $\psi^n\in \lips[\frac{L_f}{r-G}]$.
\end{lemma}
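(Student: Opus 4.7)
The plan is to prove the bound by induction on $n$, mirroring Lemma~\ref{lemma:uniform_lipschitz} and exploiting a \emph{same-jumps} coupling at the policy level.

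For the base case $n=0$, we have $\psi^0(x)=\E[x]{\int_0^\infty e^{-rt}f(X_t)\,\dd{t}}$. Coupling $X^x,X^y$ through a common Brownian motion and using $\E{|X_t^x-X_t^y|}\leq e^{Gt}|x-y|$ (from Lemma~\ref{lem:l1-error-propagation} together with Jensen), the Lipschitz continuity of $f$ immediately yields
\[
|\psi^0(x)-\psi^0(y)|\leq L_f\int_0^\infty e^{-rt}\E{|X_t^x-X_t^y|}\,\dd{t}\leq L_f\int_0^\infty e^{-(r-G)t}|x-y|\,\dd{t}=\frac{L_f}{r-G}|x-y|,
\]
where the integral is finite by Assumption~\ref{assump:basic}\ref{discount}.

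For the inductive step, assume the claim holds at level $n-1$. The key ingredient is the representation $\psi^n(x)=\inf_{\upsilon\in\Upsilon:|\upsilon|\leq n}J(x;\upsilon)$, the optimal cost among admissible impulse policies with at most $n$ interventions; this follows from $\psi^n=\stp\psi^{n-1}$ (see \eqref{eq:policy_improvement_psin_value_definition_classical}) by unfolding $\nonlocal$ via \eqref{eq:nonloc} inside the outer infimum and applying the strong Markov property to absorb the post-intervention continuation value. Given $\epsilon>0$ and $y\in\RR$, choose an $\epsilon$-optimal policy $\upsilon^{y}=\{(\tau_k,\xi_k)\}_{k=1}^{N}$ with $N\leq n$ for $\psi^n(y)$, and apply the \emph{same} stopping times and \emph{same} jumps to the process started from $x$; this produces an admissible policy $\upsilon^{x}$ since $\upsilon^y$ is measurable with respect to the common Brownian filtration.

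The decisive observation is that identical jumps cancel in $\Delta_t=X_t^x-X_t^y$, so $\Delta$ has no jump component and the It\^o/Gr\"onwall argument underlying Lemma~\ref{lem:l1-error-propagation} still delivers $\E{|\Delta_t|}\leq e^{Gt}|x-y|$. Since the intervention costs also cancel ($l(\xi_k^x)=l(\xi_k^y)$), only the running cost contributes to the difference:
\[
J(x;\upsilon^{x})-J(y;\upsilon^{y})=\E{\int_0^\infty e^{-rt}(f(X_t^x)-f(X_t^y))\,\dd{t}}\leq \frac{L_f}{r-G}|x-y|.
\]
Combining with $J(y;\upsilon^{y})\leq\psi^n(y)+\epsilon$, symmetrizing $x\leftrightarrow y$, and sending $\epsilon\to 0^+$ closes the induction.

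The main obstacle to avoid is a naive operator-level estimate: bounding $\stp\psi^{n-1}(x)-\stp\psi^{n-1}(y)$ via an $\epsilon$-optimal stopping time $\tau^*$ produces the suboptimal constant $\tfrac{L_f}{r-G}+L_l$, because $\nonlocal\psi^{n-1}$ is only $L_l$-Lipschitz and couples through $\E{e^{-r\tau^*}|X_{\tau^*}^x-X_{\tau^*}^y|}\leq|x-y|$. The same-jumps coupling at the policy level sidesteps this by annihilating the intervention cost difference, leaving the running cost as the sole contributor and thereby recovering the tight constant $\tfrac{L_f}{r-G}$.
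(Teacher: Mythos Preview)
Your proof is correct, but it takes a genuinely different route from the paper's. The paper works at the single-step operator level: from the inductive hypothesis and \cite[Lemma 2.2-(3)]{guo2009smooth} (or equivalently the change of variable $\xi\mapsto y=x+\xi$ in $\nonlocal$) one has $\nonlocal\psi^{n-1}\in\lips[L_l]\subset\lips[\frac{L_f}{r-G}]$ by \ref{discount}. Then, for any stopping time $\tau$ and coupled uncontrolled diffusions $X^x,X^{x'}$, the paper factors out the pathwise supremum $M:=\sup_{s\geq0}e^{-Gs}|X^x_s-X^{x'}_s|$ (controlled by Lemma~\ref{lem:l1-error-propagation}) and observes the exact telescoping
\[
\int_0^\tau e^{-(r-G)s}L_f\,\dd s+e^{-(r-G)\tau}\frac{L_f}{r-G}=\frac{L_f}{r-G},
\]
so $|J^n(x;\tau)-J^n(x';\tau)|\leq \frac{L_f}{r-G}\,\E{M}\leq\frac{L_f}{r-G}|x-x'|$. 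Your approach instead unfolds $\psi^n$ into an impulse problem with at most $n$ interventions and uses a same-jumps coupling so that both the jump increments and the intervention costs cancel identically, leaving only the running-cost contribution to estimate.

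Both arguments are clean; yours avoids invoking the Lipschitz property of $\nonlocal\psi^{n-1}$ and the induction hypothesis altogether once the representation $\psi^n=\inf_{|\upsilon|\leq n}J(\cdot;\upsilon)$ is granted, while the paper's stays at the one-step level and needs no unfolding. Your final paragraph, however, overstates the obstacle: the ``naive'' sum $\frac{L_f}{r-G}+L_l$ only arises if one carelessly extends the running-cost integral to $[0,\infty)$ before adding the terminal term. The paper's operator-level computation, which keeps $\int_0^\tau$ and uses the telescoping above (exactly as in the proof of Lemma~\ref{lem:rand-stp-lips}), recovers the tight constant $\frac{L_f}{r-G}$ directly, so the operator-level route is not in fact suboptimal.
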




To better analyze the fixed point iteration of the classical stopping operator, we introduce a ``semi-randomized'' impulse control problem, where we merely randomize the jump size with $\nonlocal^{\lambda_2}\psi^n$. In specific, for any $n\in \NN$,
\begin{equation}
\begin{cases}
    \nonlocal^{\lambda_2}\phi(x)&=\inf_{\mu\in\mathcal P(\mathbb R)}\left[\int_{\mathbb{R}}\phi(x+\xi)+l(\xi)\mu(d\xi)-\lambda_2\mathcal H(\mu)\right],\\
    \tilde\stp^n\phi(x)&=\inf_{\tau}\mathbb{E}\left[\int_0^\tau e^{-rt}f(X_{t-})\dd{t}+e^{-r\tau}\nonlocal^{\lambda_2}\psi^n(X_{\tau-})\biggl|X_{0-}=x\right].
\end{cases}\label{eq:intermediate-impulse-control}\end{equation}
The corresponding value function $\tilde{\psi}^n$ for the intermediate problem would satisfy the fixed-point problem
\[\tilde\psi^n=\tilde\stp^n\tilde\psi^n\]
as well as the HJB equation
\begin{equation}\label{eq:additional_HJB}
\min\{\cL\tilde{\psi}^n-r\tilde{\psi}^n+f,\nonlocal^{\lambda_2}\psi^n-\tilde{\psi}^n\}=0.
\end{equation}

In fact, one can show by going through the same line of proof as former work \cite[Section 3,5]{pham2009continuous} that for any $n\in \NN$, the newly-introduced semi-randomized solution $\tilde\psi^n$ is the unique viscosity solution to \eqref{eq:additional_HJB}. 
Moreover, we have the following propositions.
\begin{proposition}
For any $n\in\NN$, $\tilde\psi^n$ is the unique viscosity solution to \eqref{eq:additional_HJB}. Moreover, $\tilde{\psi}^n(x)\in \lips[\frac{L_f}{r-G}]$.
\end{proposition}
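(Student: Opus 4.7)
The plan is to handle the statement in three stages: first identify $\tilde\psi^n$ as the value function of a classical optimal stopping problem with Lipschitz terminal reward, then deduce existence and uniqueness of the viscosity solution to \eqref{eq:additional_HJB} via standard arguments, and finally extract the sharp Lipschitz bound via a careful pathwise estimate exploiting \ref{discount}.

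For the viscosity-solution characterization, I would first observe that $\psi^n\in\lips[\frac{L_f}{r-G}]$ by Lemma~\ref{lemma:uniform_lipschitz_classical_iterates}, so Lemma~\ref{lem:rand-nonloc-attain} gives $\nonlocal^{\lambda_2}\psi^n\in\lips[L_l]$. Thus the operator $\tilde\stp^n$ in \eqref{eq:intermediate-impulse-control} is---for every input $\phi$---the value of a fixed optimal stopping problem for the uncontrolled diffusion \eqref{eq:sde-uncontrolled} with Lipschitz running cost $f$ and Lipschitz terminal reward $\nonlocal^{\lambda_2}\psi^n$, and $\tilde\psi^n$ is precisely this value. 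Applying the dynamic programming principle and the classical translation into an obstacle-type HJB as in \cite[Sections~3,~5]{pham2009continuous} yields that $\tilde\psi^n$ is a viscosity solution of \eqref{eq:additional_HJB}. Uniqueness then follows from the standard comparison principle for such obstacle-type PDEs, which applies under the uniform ellipticity of $\sigma$ (Assumption~\ref{ass:dynamic}(ii)) and the boundedness of the coefficients (Assumption~\ref{assump:boundedness}).

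For the Lipschitz estimate---the technically delicate step---the plan is to bound $|\tilde\psi^n(x) - \tilde\psi^n(y)|$ uniformly over admissible stopping times. Fix $x, y \in \RR$ and let $X^x, X^y$ denote the solutions to \eqref{eq:sde-uncontrolled} starting from $x$ and $y$ respectively. For any stopping time $\tau$, applying the Lipschitz continuity of $f$ and $\nonlocal^{\lambda_2}\psi^n$ bounds
\[
\left|\EE\!\left[\int_0^\tau e^{-rt}f(X^x_{t-})\dd{t} + e^{-r\tau}\nonlocal^{\lambda_2}\psi^n(X^x_{\tau-})\right] - \EE\!\left[\int_0^\tau e^{-rt}f(X^y_{t-})\dd{t} + e^{-r\tau}\nonlocal^{\lambda_2}\psi^n(X^y_{\tau-})\right]\right|
\]
above by $L_f\,\EE\!\left[\int_0^\tau e^{-rt}|X^x_t - X^y_t|\dd{t}\right] + L_l\, \EE\!\left[e^{-r\tau}|X^x_\tau - X^y_\tau|\right]$. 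Setting $Y_t := e^{-Gt}|X^x_t - X^y_t|$, Lemma~\ref{lem:l1-error-propagation} gives $\EE[\sup_t Y_t] \leq |x-y|$; factoring $\sup_t Y_t$ out pathwise and evaluating the deterministic time integral, the display above is bounded by
\[
\EE\!\left[\sup_t Y_t\cdot\left(\tfrac{L_f}{r-G}\bigl(1 - e^{-(r-G)\tau}\bigr) + L_l\, e^{-(r-G)\tau}\right)\right].
\]
The bracketed quantity is a convex combination of $\frac{L_f}{r-G}$ and $L_l$, and \ref{discount} forces $L_l<\frac{L_f}{r-G}$, so this combination is uniformly bounded above by $\frac{L_f}{r-G}$. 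Taking the infimum over $\tau$ and using symmetry in $x,y$ yields $\tilde\psi^n\in\lips[\frac{L_f}{r-G}]$.

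The hardest part is the sharpness of the Lipschitz constant: a naive additive estimate would yield $L_l + \frac{L_f}{r-G}$, which is strictly larger than the asserted $\frac{L_f}{r-G}$. It is therefore essential that the terminal-reward and running-cost contributions enter a convex combination rather than accumulate, and that the sign condition in \ref{discount} forces this combination to saturate at the running-cost contribution $\frac{L_f}{r-G}$.
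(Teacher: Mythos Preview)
Your proposal is correct and follows essentially the same approach as the paper, which simply defers to \cite[Sections~3,~5]{pham2009continuous} and \cite{bensoussan2011applications} after noting that the obstacle $\nonlocal^{\lambda_2}\psi^n$ is Lipschitz. Your detailed Lipschitz computation is precisely the argument used in the appendix for the analogous Lemma~\ref{lemma:uniform_lipschitz_classical_iterates} and Lemma~\ref{lem:rand-stp-lips}, so you are spelling out what the paper leaves implicit.
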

This follows directly from \cite[Section 3,5]{pham2009continuous} and \cite{bensoussan2011applications} since the obstacle function $\nonlocal^{\lambda_2}\psi^n\in \lips[\frac{L_f}{r-G}]$, therefore the proof is omitted.

\begin{proposition}\label{prop:bdd}
For any $n\in\NN$, $\psi^n$, $\tilde{\psi}^n$, $\psi^{\blambda,n}$, $\stp^{\blambda}\psi^{n}$ are bounded.
\end{proposition}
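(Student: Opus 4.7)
The plan is to verify boundedness of the four functions in turn, exploiting that $f$ is bounded (Assumption~\ref{assump:boundedness}), $l\ge K>0$ (Assumption~\ref{assump:basic}), and $\psi^n\ge \psi\ge 0$ for all $n$. Boundedness of $\psi^n$ follows immediately from Proposition~\ref{propn:classical_iterates_bound} combined with $\psi\ge 0$ (which holds since $f\ge 0$ and $l\ge K>0$ make the classical cost functional non-negative). For $\tilde\psi^n$ and $\stp^{\blambda}\psi^n$, I would derive the upper bounds $\tilde\psi^n,\,\stp^{\blambda}\psi^n\le M/r$ by choosing the trivial policies $\tau=\infty$ and $\pi\equiv 0$, respectively, and then observe that the explicit formula from Lemma~\ref{lem:rand-nonloc-attain} together with $\psi^n\ge 0$ and $l\ge K$ yields $\nonlocal^{\lambda_2}\psi^n\ge K>0$ via the elementary inequality $-\lambda_2\log \E[\zeta\sim\Phi_x]{e^{-(\psi^n(x+\zeta)+l(\zeta))/\lambda_2}}\ge K$. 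This implies $\tilde\psi^n\ge 0$ directly (both terms inside the optimal-stopping integrand are non-negative), and $\stp^{\blambda}\psi^n\ge -\lambda_1/r$ follows by bounding the integrand of the $T^{\lambda_1}$ value function below by $-\lambda_1$ (using $\cR\le 1$ and $\pi_t\nonlocal^{\lambda_2}\psi^n(X_t)\ge 0$).

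For $\psi^{\blambda,n}$, I would argue by induction on $n$. The upper bound $\psi^{\blambda,n}\le \psi^{\blambda,0}\le M/r$ follows from the monotonicity in Theorem~\ref{thm:policy_improvement}. The lower bound is the delicate step, since the exploration bonus $-\lambda_1\cR(\pi_t)$ need not preserve non-negativity of the iterates, so $\nonlocal^{\lambda_2}\psi^{\blambda,n-1}$ could slip below $K$. Assuming inductively $\psi^{\blambda,n-1}\ge -C$ for some $C\ge 0$, one has $\nonlocal^{\lambda_2}\psi^{\blambda,n-1}\ge K-C$, so the running cost satisfies $f-\lambda_1\cR(\pi_t)+\pi_t\nonlocal^{\lambda_2}\psi^{\blambda,n-1}(X_t)\ge -\lambda_1+(K-C)\pi_t$. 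Applying integration by parts to the survival process $p$ (from $\dd p_t=-\pi_t p_t\dd t$ and $p_0=1$ one obtains $\int_0^\infty e^{-rt}p_t\pi_t\dd t=1-r\int_0^\infty e^{-rt}p_t\dd t$), the pathwise lower bound on the discounted cost integral becomes linear in $Q\coloneqq\int_0^\infty e^{-rt}p_t\dd t\in[0,1/r]$. Minimizing this linear functional over $Q$ yields $J^{\lambda_1,\nonlocal^{\lambda_2}\psi^{\blambda,n-1}}(1,x;\pi)\ge \min\bigl(-\lambda_1/r,\, K-C\bigr)$, whence $\psi^{\blambda,n}\ge -\max\bigl(\lambda_1/r,\, C-K\bigr)$. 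Since $K>0$, the recursion $C_n\le \max(\lambda_1/r,\, C_{n-1}-K)$ initiated at $C_0=0$ collapses to $C_n=\lambda_1/r$ for every $n\ge 1$, producing the uniform lower bound $\psi^{\blambda,n}\ge -\lambda_1/r$.

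The main technical obstacle is this inductive lower bound: a naive propagation of the implication ``$\phi\ge -C$ implies $\nonlocal^{\lambda_2}\phi\ge K-C$'' combined with a crude estimate of the randomized Bellman operator would let the loss accumulate multiplicatively in $n$ and fail to yield any $n$-uniform estimate. The integration-by-parts identity relating $\int e^{-rt}p_t\pi_t\dd t$ to $1-r\int e^{-rt}p_t\dd t$ is precisely what converts the intensity-weighted cost into a bounded linear functional of the $r$-discounted survival integral, enabling the clean min-over-$Q$ optimization and the stable, self-correcting recursion for the lower bounds $C_n$.
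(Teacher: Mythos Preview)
Your proposal is correct, and on one point it is actually more careful than the paper's own proof. The paper treats all four functions with a single stroke: take $\tau=\infty$ (respectively $\pi\equiv 0$) to get the upper bound $M/r$, and assert the lower bound $0$ from nonnegativity of $f$ and of the relevant obstacle, writing simply $0\le\psi^{\blambda,n+1}\le M/r$ and likewise $0\le\stp^{\blambda}\psi^n\le M/r$, without addressing why the entropy bonus $-\lambda_1\cR(\pi_t)$ does not push the randomized iterates negative. Your inductive argument with the integration-by-parts identity $\int_0^\infty e^{-rt}p_t\pi_t\,dt=1-r\int_0^\infty e^{-rt}p_t\,dt$ fills exactly this gap, converting the intensity-weighted term into a linear functional of $Q=\int_0^\infty e^{-rt}p_t\,dt\in[0,1/r]$ and yielding the stable recursion $C_n\le\max(\lambda_1/r,C_{n-1}-K)$ that collapses to $\lambda_1/r$; this is a genuine improvement in rigor over the paper's bare assertion. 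For $\psi^n$ you invoke Proposition~\ref{propn:classical_iterates_bound}, whereas the paper just plugs $\tau=\infty$ into the iterated optimal-stopping definition; both arguments are valid. In summary, your approach coincides with the paper's on three of the four functions and is strictly more careful on the lower bound for $\psi^{\blambda,n}$ (and $\stp^{\blambda}\psi^n$).
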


We also notice the following property, which bounds the difference between the semi-randomized problem and the classical problem.

\begin{lemma}\label{lemma:semi-randomized_nonlocal_bound}
For any compact set $K$ with $W\subset K\Subset\RR$, for any $n\in\NN^+$, $\|\psi^n-\tilde\psi^{n-1}\|_{L^\infty(K)}\leq \|\nonlocal\psi^{n-1}-\nonlocal^{\lambda_2}\psi^{n-1}\|_{L^\infty(K)}$.
\end{lemma}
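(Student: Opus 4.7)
The plan is to combine two facts: (i) by Lemma~\ref{lem:compare-classical}, $\nonlocal\psi^{n-1} \leq \nonlocal^{\lambda_2}\psi^{n-1}$ pointwise, which by monotonicity of the optimal-stopping operator in its obstacle gives $\psi^n \leq \tilde\psi^{n-1}$ on $\RR$; and (ii) Assumption~\ref{assump:boundedness}\textup{(ii)} confines the continuation region $\mathfrak{C}^n$ to $W \subset K$, so the optimal stopping time for $\psi^n$ lets the continuous diffusion exit only inside $K$. Combining the two reduces the estimate to the $L^\infty(K)$-gap between the two obstacles, which is exactly the claim. Note that (i) already implies $|\psi^n - \tilde\psi^{n-1}| = \tilde\psi^{n-1} - \psi^n$, so only the upper bound on this difference needs to be proved.

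For the upper bound, I would fix $x \in K$ and split on whether $x \in \mathfrak{C}^n$. If $x \in \mathfrak{C}^n$, let $\tau^*_n = \inf\{t \geq 0 : X^x_t \notin \mathfrak{C}^n\}$, the optimal stopping time realizing $\psi^n(x)$; it is a.s.\ finite because $\mathfrak{C}^n \subset W$ is bounded and $\sigma \geq \sigma_0 > 0$. Plugging $\tau^*_n$ into the suboptimal representation of $\tilde\psi^{n-1}(x)$ and subtracting the optimal representation of $\psi^n(x)$ cancels the running-cost term, leaving
\begin{equation*}
\tilde\psi^{n-1}(x) - \psi^n(x) \;\leq\; \E[x]{e^{-r\tau^*_n}\bigl(\nonlocal^{\lambda_2}\psi^{n-1} - \nonlocal\psi^{n-1}\bigr)(X_{\tau^*_n})}.
\end{equation*}
Continuity of paths together with $\mathfrak{C}^n \subset W \subset K$ (and $K$ compact, hence closed) force $X^x_{\tau^*_n} \in \overline{\mathfrak{C}^n} \subset \overline{W} \subset K$ almost surely, and since $e^{-r\tau^*_n} \leq 1$ the right-hand side is bounded by $\|\nonlocal^{\lambda_2}\psi^{n-1} - \nonlocal\psi^{n-1}\|_{L^\infty(K)}$. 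In the remaining case $x \in K \setminus \mathfrak{C}^n$, \eqref{eq:classical_PI_hjb} gives $\psi^n(x) = \nonlocal\psi^{n-1}(x)$, while $\tau = 0$ is admissible for $\tilde\psi^{n-1}$, yielding $\tilde\psi^{n-1}(x) \leq \nonlocal^{\lambda_2}\psi^{n-1}(x)$; the bound is then immediate.

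The main (and mostly technical) obstacle is justifying that $\tau^*_n$ is genuinely optimal for $\psi^n(x)$, rather than merely $\varepsilon$-optimal. I would appeal to the standard verification theorem for the HJB--QVI~\eqref{eq:classical_PI_hjb}, leveraging the fact that $\psi^n$ is a Lipschitz viscosity solution with open continuation region $\mathfrak{C}^n$ and continuous obstacle $\nonlocal\psi^{n-1}$. Should regularity at $\partial\mathfrak{C}^n$ pose any issue, an $\varepsilon$-optimal stopping time argument (exiting a slightly enlarged region still contained in $K$ for small $\varepsilon$) followed by $\varepsilon \to 0$ delivers the same estimate without any extra ingredient.
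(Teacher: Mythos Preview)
Your proposal is correct and follows essentially the same route as the paper: both argue $\psi^n\le\tilde\psi^{n-1}$ from $\nonlocal\psi^{n-1}\le\nonlocal^{\lambda_2}\psi^{n-1}$ and obstacle monotonicity, and for the reverse bound both plug the optimal stopping time $\tau^*$ for $\psi^n$ into the definition of $\tilde\psi^{n-1}$, using $X_{\tau^*}\in\overline{\mathfrak{C}^n}\subset K$ to localize the obstacle gap. The paper merges your two cases into the single observation ``either $\tau^*(x)=0$, or $x\in\mathfrak{C}^n$ and $X_{\tau^*(x)}\in K$'', but the content is identical; your remark about falling back on $\varepsilon$-optimal times if verification is delicate is a reasonable safeguard that the paper simply takes for granted.
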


Then we have the following upper bounds for \(\stp^{\blambda}\psi^n\) and \(\|\stp^{\blambda}\psi^n-\tilde\psi^n\|_{L^\infty(\RR)}\), respectively.
\begin{proposition}\label{propn:inter_bound_nonlocal_stp}
There exists some constant $C>0$, such that for any $n\in \NN$, $x\in\RR$,
    \(\stp^{\blambda}\psi^{n}(x)\le
  \nonlocal^{\lambda_2}\psi^{n}(x)
  +C\lambda_1\log\frac{1}{\lambda_1}.\)
\end{proposition}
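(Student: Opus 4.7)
The plan is to upper bound $\stp^{\blambda}\psi^{n} = T^{\lambda_{1}}[\nonlocal^{\lambda_{2}}\psi^{n}]$ by plugging a constant feedback intensity $\pi_{t}\equiv\pi_{0}$ (for some $\pi_{0}>0$ to be tuned in terms of $\lambda_{1}$) into the definition \eqref{eq:rand-stp}--\eqref{eq:rand-stp-cost}, and then to optimize $\pi_{0}$. Writing $g\coloneqq\nonlocal^{\lambda_{2}}\psi^{n}$, this choice gives
\[
\stp^{\blambda}\psi^{n}(x)\le J^{\lambda_{1},g}(x;\pi_{0})
=\E[x]{\int_{0}^{\infty}e^{-(r+\pi_{0})t}\bigl(f(X_{t})-\lambda_{1}\RCal(\pi_{0})+\pi_{0}g(X_{t})\bigr)\dd{t}}.
\]
Using the identity $\pi_{0}/(r+\pi_{0})=1-r/(r+\pi_{0})$ to split $\pi_{0}g(X_{t})=\pi_{0}g(x)+\pi_{0}(g(X_{t})-g(x))$ and noting that $g\ge K>0$ (since $l\ge K$ and $\psi^{n}\ge 0$), the nonpositive term $-g(x)\,r/(r+\pi_{0})$ can be dropped in an upper bound, yielding
\[
\stp^{\blambda}\psi^{n}(x)-g(x)\ \le\ \frac{M}{r+\pi_{0}}\ +\ \frac{\lambda_{1}\bigl(\pi_{0}\log\pi_{0}-\pi_{0}\bigr)}{r+\pi_{0}}\ +\ \pi_{0}\E[x]{\int_{0}^{\infty}e^{-(r+\pi_{0})t}\bigl(g(X_{t})-g(x)\bigr)\dd{t}},
\]
where the first term uses Assumption~\ref{assump:bounded_coeffs} through $\|f\|_{\infty}\le M$.

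For the third term, I would invoke the uniform Lipschitz bound $g\in\lips[L_{l}]$ from Lemma~\ref{lem:rand-nonloc-attain} (note: $L_{l}$ is independent of $n$ and $\lambda_{2}$) together with the standard moment estimate $\E|X_{t}-x|^{2}\le C(t+t^{2})$ coming from boundedness of $b,\sigma$ under Assumption~\ref{assump:bounded_coeffs}, so that
\[
\pi_{0}\int_{0}^{\infty}e^{-(r+\pi_{0})t}L_{l}\sqrt{\E|X_{t}-x|^{2}}\,\dd{t}
\ \le\ L_{l}\Bigl(\tfrac{\sqrt{M\pi}}{2\sqrt{\pi_{0}}}+\tfrac{M}{\pi_{0}}\Bigr),
\]
by computing $\int_{0}^{\infty}e^{-(r+\pi_{0})t}\sqrt{t}\,\dd{t}=\Gamma(3/2)/(r+\pi_{0})^{3/2}$ and a similar first-moment integral. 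Collecting the three pieces for $\pi_{0}\ge 1$ gives, with some constant $C_{1}>0$ depending only on $M$, $L_{l}$, $r$,
\[
\stp^{\blambda}\psi^{n}(x)-g(x)\ \le\ C_{1}\Bigl(\lambda_{1}\log\pi_{0}\ +\ \tfrac{1}{\sqrt{\pi_{0}}}\ +\ \tfrac{1}{\pi_{0}}\Bigr),
\qquad \forall x\in\RR.
\]

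The last step is to optimize in $\pi_{0}$. Setting $\pi_{0}=1/\lambda_{1}^{2}$ (valid once $\lambda_{1}$ is small enough that $\pi_{0}\ge 1$) gives $\log\pi_{0}=2\log(1/\lambda_{1})$ and $1/\sqrt{\pi_{0}}=\lambda_{1}$, so the right-hand side is bounded by $C_{2}\lambda_{1}\log(1/\lambda_{1})$ for $\lambda_{1}$ small, since $\lambda_{1}$ and $\lambda_{1}^{2}$ are both $o(\lambda_{1}\log(1/\lambda_{1}))$. For $\lambda_{1}$ in any compact subset of $(0,+\infty)$ the difference $\stp^{\blambda}\psi^{n}-g$ is trivially bounded (the choice $\pi\equiv 0$ yields $T^{\lambda_{1}}g\le M/r$, while $g\ge K$), so enlarging $C$ absorbs this range. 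The main obstacle is that $g=\nonlocal^{\lambda_{2}}\psi^{n}$ need not be uniformly bounded in $x$ (because $l$ is only Lipschitz and the baseline measure $\Phi_{x}$ is centred at $-x$), so one cannot naively replace $g(X_{t})$ by $g(x)+O(1)$; this is circumvented precisely by the observation that $-g(x)\,r/(r+\pi_{0})\le 0$ and may be discarded, which converts an a~priori unbounded term into a purely Lipschitz-type remainder.
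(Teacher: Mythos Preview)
Your proposal is correct and follows essentially the same route as the paper: plug in the constant suboptimal intensity $\bar\pi\equiv 1/\lambda_1^{2}$, bound the running cost by $\|f\|_\infty\le M$, control the remainder $\nonlocal^{\lambda_2}\psi^{n}(X_t)-\nonlocal^{\lambda_2}\psi^{n}(x)$ via Lipschitzness together with the moment estimate $\E[x]{|X_t-x|}\le M(t+\sqrt{t})$ (from bounded $b,\sigma$), and observe that the entropy term contributes the dominant $\lambda_1\log(1/\lambda_1)$. The paper uses the Lipschitz constant $\tfrac{L_f}{r-G}$ for $\nonlocal^{\lambda_2}\psi^{n}$ whereas you invoke the sharper $L_l$ from Lemma~\ref{lem:rand-nonloc-attain}, and your explicit handling of the $-\tfrac{r}{r+\pi_0}g(x)\le 0$ term is a clean way to avoid tracking the (linearly growing) $g(x)$; otherwise the two arguments are the same.
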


\begin{proposition}\label{propn:randomized_os_convergence}
There exists some constant $C>0$, such that for sufficiently small $\lambda_1>0$, for any $n\in\NN$, $\|\stp^{\blambda}\psi^{n}-\tilde{\psi}^{n}\|_{L^\infty(\RR)}\leq C\lambda_1\log\frac{1}{\lambda_1}$.
\end{proposition}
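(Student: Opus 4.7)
The plan is to bound $\stp^{\blambda}\psi^n - \tilde{\psi}^n$ above and below separately, with the upper bound providing the dominant $\lambda_1\log(1/\lambda_1)$ rate. For the easier lower bound $\tilde{\psi}^n - \stp^{\blambda}\psi^n \le \lambda_1/r$, I would exploit the fact that, absent the entropy penalty, the cost in \eqref{eq:rand-stp-cost} with obstacle $\nonlocal^{\lambda_2}\psi^n$ coincides with the classical obstacle--problem cost associated with a \emph{randomized} stopping time of intensity $\pi$; since randomized stopping times do not lower the classical optimal--stopping value, we have
\[
\E[x]{\int_0^\infty e^{-\int_0^t(r+\pi_s)ds}\bigl(f(X_t)+\pi_t\nonlocal^{\lambda_2}\psi^n(X_t)\bigr)dt}\ge \tilde{\psi}^n(x)
\]
for every admissible $\pi$. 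Coupling this with the pointwise bound $\cR(\pi)\le \cR(1)=1$ and $e^{-\int_0^t(r+\pi_s)ds}\le e^{-rt}$ then yields $\stp^{\blambda}\psi^n(x)\ge\tilde{\psi}^n(x)-\lambda_1/r$.

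The more delicate upper bound $\stp^{\blambda}\psi^n-\tilde{\psi}^n\le C\lambda_1\log(1/\lambda_1)$ I would establish by evaluating $J^{\lambda_1,\nonlocal^{\lambda_2}\psi^n}$ at an explicit intensity. Let $\hat{\tau}^n=\inf\{t\ge 0:\tilde{\psi}^n(X_t)=\nonlocal^{\lambda_2}\psi^n(X_t)\}$ be optimal for the obstacle problem \eqref{eq:additional_HJB}, and set $\pi^*_t=K\mathbbm{1}\{t\ge\hat{\tau}^n\}$ for a parameter $K>0$ to be chosen. The resulting survival process is $p^*_t=e^{-K(t-\hat{\tau}^n)_+}$, so applying the strong Markov property at $\hat{\tau}^n$ splits the cost as
\[
J^{\lambda_1,\nonlocal^{\lambda_2}\psi^n}(x;\pi^*)=\E[x]{\int_0^{\hat{\tau}^n}e^{-rt}f(X_t)dt+e^{-r\hat{\tau}^n}\Phi_K(X_{\hat{\tau}^n})}-\frac{\lambda_1\cR(K)}{r+K}\E[x]{e^{-r\hat{\tau}^n}},
\]
with $\Phi_K(y):=\E[y]{\int_0^\infty e^{-(r+K)s}\bigl(f(X_s)+K\nonlocal^{\lambda_2}\psi^n(X_s)\bigr)ds}$. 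Since the density $(r+K)e^{-(r+K)s}$ concentrates at $s=0$, the Lipschitz regularity from Lemma~\ref{lem:rand-nonloc-attain} together with the moment bound $\E[y]{|X_s-y|}\le \|b\|_\infty s+\|\sigma\|_\infty\sqrt{s}$ (both uniform in $y,n$ under Assumption~\ref{assump:boundedness}) yields $\Phi_K(y)=\nonlocal^{\lambda_2}\psi^n(y)+O(1/\sqrt{K})$. Substituting and invoking the optimality of $\hat{\tau}^n$ for $\tilde{\psi}^n$ collapses the first bracket to $\tilde{\psi}^n(x)+O(1/\sqrt{K})$. Since $|\cR(K)|\le K\log K$ for $K>e$, the penalty term contributes $O(\lambda_1\log K)$, and the choice $K=1/\lambda_1^2$ balances $1/\sqrt{K}$ with $\lambda_1\log K$, producing the stated rate.

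The principal obstacle will be ensuring that the constant $C$ is independent of $n$, since $\tilde{\psi}^n$, its stopping region, and $\hat{\tau}^n$ all vary with $n$. This uniformity rests on three ingredients, all already in hand: the common Lipschitz constant $L_f/(r-G)$ for $\{\psi^n\}$ from Lemma~\ref{lemma:uniform_lipschitz_classical_iterates}, propagated to $\{\nonlocal^{\lambda_2}\psi^n\}$ via Lemma~\ref{lem:rand-nonloc-attain}; the uniform $L^\infty$ bounds from Assumption~\ref{assump:boundedness} and Proposition~\ref{prop:bdd}; and the observation that the argument does not require $\hat{\tau}^n<\infty$ almost surely, because $\pi^*\equiv 0$ on $\{\hat{\tau}^n=\infty\}$, where the identity above collapses to $\tilde{\psi}^n(x)$ with no additional error.
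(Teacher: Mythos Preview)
Your argument is correct and takes a genuinely different route from the paper's. The paper proves both inequalities via PDE comparison: for the lower bound it checks that $w=\tilde{\psi}^n-C\lambda_1$ (with $rC=e^{-C}$) is a subsolution of the semilinear equation satisfied by $\stp^{\blambda}\psi^n$ and invokes the maximum principle; for the upper bound it argues by contradiction, perturbing by $\frac{\epsilon}{2}|x|^2$ to localize a maximizer $x^*$, using Proposition~\ref{propn:inter_bound_nonlocal_stp} to force $\tilde{\psi}^n(x^*)<\nonlocal^{\lambda_2}\psi^n(x^*)$ (so that $(\cL-r)\tilde{\psi}^n+f=0$ there), and then reading off a contradiction from the two HJB equations.

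Your approach is probabilistic throughout. The lower bound exploits that $e^{-rt}\tilde{\psi}^n(X_t)+\int_0^t e^{-rs}f(X_s)\,ds$ is a submartingale together with $\tilde{\psi}^n\le\nonlocal^{\lambda_2}\psi^n$, which cleanly yields the ``randomized stopping does not help'' inequality; the bound $\cR\le1$ then gives the $\lambda_1/r$ loss. Your upper bound is essentially Proposition~\ref{propn:inter_bound_nonlocal_stp} (same constant intensity $K=\lambda_1^{-2}$, same $\E|X_s-y|\le M(s+\sqrt{s})$ drift estimate) but activated \emph{only after} the optimal hitting time $\hat{\tau}^n$, so that the pre-$\hat{\tau}^n$ contribution collapses directly to $\tilde{\psi}^n(x)$ by the dynamic programming identity, bypassing the paper's separate maximum-principle step. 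What you gain is an argument that never touches second derivatives of $\tilde{\psi}^n$ (which is only a viscosity solution of an obstacle problem), and the uniformity in $n$ is transparent since every constant comes from $L_l$, $M$, $r$ alone. What the paper's route gains is a slightly sharper lower-bound constant and a style consistent with the PDE machinery used elsewhere.
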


Finally, we present the following convergence theorem, which demonstrates that the randomized impulse control problem approximates its classical counterpart as $\blambda$ tends to \((0,0)\).

\begin{theorem}\label{thm:comparison_convergence}
For any compact set $K$ with $W\subset K$, 
any sequence $\{(\lambda_1^m,\lambda_2^m)\}_{m\ge 1}$ with
\(
\lambda_1^m\to0\)
and
\(\lambda_2^m\to0
\) as \(m\to \infty\),
there exists a subsequence $\{(\lambda_1^{m_k},\lambda_2^{m_k})\}_{k\ge 1}\subset \{(\lambda_1^m,\lambda_2^m)\}_{m\ge 1}$, such that
\(
\|\psi^{\blambda^{m_k}}(x)-\psi(x)\|_{L^\infty(K)}
\xrightarrow[]{k\to\infty}0.
\)
\end{theorem}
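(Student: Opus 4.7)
The plan is to extract a locally uniformly convergent subsequence of $\{\psi^{\blambda^m}\}$ via Arzel\`a--Ascoli, pass to the limit in the fixed-point relation $\psi^{\blambda^m}=\stp^{\blambda^m}\psi^{\blambda^m}$, and identify the limit using the uniqueness of the classical value function. By Lemma~\ref{lem:rand-lips}, each $\psi^{\blambda^m}$ is $(L_f/(r-G))$-Lipschitz, giving a common modulus of continuity. For uniform boundedness, the monotonicity from \ref{improve} yields $\psi^{\blambda^m}\le\psi^{\blambda^m,0}=\psi^0$, which is bounded under Assumption~\ref{assump:boundedness}, while $f\ge 0$, $l\ge K>0$, and $\cR\le 1$ produce a matching lower bound with entropy correction of order $\lambda_1^m/r\to 0$. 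A diagonal Arzel\`a--Ascoli extraction then yields a subsequence $\{\blambda^{m_k}\}$ along which $\psi^{\blambda^{m_k}}\to\psi^\infty$ locally uniformly on $\RR$, with $\psi^\infty\in\lips[L_f/(r-G)]$ bounded.

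The core step is to show $\stp^{\blambda^{m_k}}\psi^{\blambda^{m_k}}\to\stp\psi^\infty$ uniformly on $K$, since $\psi^{\blambda^{m_k}}$ already converges to $\psi^\infty$ by construction. I split
\[
\|\stp^{\blambda^{m_k}}\psi^{\blambda^{m_k}}-\stp\psi^\infty\|_{L^\infty(K)}
\leq \|\stp^{\blambda^{m_k}}\psi^{\blambda^{m_k}}-\stp^{\blambda^{m_k}}\psi^\infty\|_{L^\infty(K)}
+\|\stp^{\blambda^{m_k}}\psi^\infty-\stp\psi^\infty\|_{L^\infty(K)}.
\]
The first term is handled by combining the localized non-expansive estimate of Lemma~\ref{lem:rand-nonloc-attain} for $\nonlocal^{\lambda_2}$ with the non-expansiveness of $T^{\lambda_1}$ (cf.\ \ref{lem:global-rand-nonexp}): for any $R$ with $K\subset[-R,R]$ it is majorized by $\|\psi^{\blambda^{m_k}}-\psi^\infty\|_{L^\infty([-R,R])}+\mathscr{E}_R$, which vanishes upon first taking $R$ large (so $\mathscr{E}_R$ is small) and then $k$ large. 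For the second term, I interpose the semi-randomized value associated with the obstacle $\nonlocal^{\lambda_2^{m_k}}\psi^\infty$: a routine extension of Proposition~\ref{propn:randomized_os_convergence} from the iterates $\psi^n$ to general bounded Lipschitz $\phi$ controls the gap from $\stp^{\blambda^{m_k}}\psi^\infty$ to this semi-randomized value by $O(\lambda_1^{m_k}\log(1/\lambda_1^{m_k}))$, and an analogous extension of Lemma~\ref{lemma:semi-randomized_nonlocal_bound} bounds the remaining gap to $\stp\psi^\infty$ by $\|\nonlocal\psi^\infty-\nonlocal^{\lambda_2^{m_k}}\psi^\infty\|_{L^\infty(K)}$, which vanishes by Lemma~\ref{lemma:nonlocal_convergence}. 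Thus $\psi^\infty=\stp\psi^\infty$ on $K$; since $\psi^\infty$ is a bounded Lipschitz fixed point of $\stp$, it is equivalently a bounded viscosity solution of the classical QVI~\eqref{eq:inf-impulse-classical}, unique by standard impulse-control theory \cite{bensoussan1984impulse,guo2009smooth}, and hence $\psi^\infty=\psi$ on $K$.

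The main obstacle is the first term in the split above. Because $\stp^{\blambda}$ is genuinely non-local through the underlying Brownian diffusion, the global non-expansiveness of $\stp^{\blambda}$ paired with merely local uniform convergence of $\psi^{\blambda^{m_k}}$ would be insufficient in isolation. The sub-Gaussian tail control $\mathscr{E}_R\to 0$ from Lemma~\ref{lem:rand-nonloc-attain}---which stems from the Gaussian target measure $\Phi_x$ built into the randomized nonlocal operator---is precisely the device that lets us trade global control for local control plus a negligible tail, closing the localization argument. A secondary technical point is extending Proposition~\ref{propn:randomized_os_convergence} and Lemma~\ref{lemma:semi-randomized_nonlocal_bound} from specific classical iterates to abstract bounded Lipschitz obstacles; this is straightforward since their proofs only use Lipschitz and boundedness properties of the obstacle.
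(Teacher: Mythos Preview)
Your compactness-and-stability route is genuinely different from the paper's argument, which never passes to a limit in the operator but instead derives two-sided quantitative bounds $\psi-\lambda_1/r\le\psi^{\blambda}\le\psi+\varepsilon(\blambda)$ by comparing the randomized and classical policy iterates $\psi^{\blambda,n}$, $\psi^n$ level by level (via Propositions~\ref{propn:inter_bound_nonlocal_stp}--\ref{propn:randomized_os_convergence} and Lemma~\ref{lemma:semi-randomized_nonlocal_bound}) and then choosing $n=n(\lambda_1)$ together with $\lambda_2\le\eta_{n(\lambda_1)}$ in a coordinated diagonal fashion. That said, your proposal has a real gap in the treatment of the first term of your split.

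The tail remainder $\mathscr{E}_R$ in Lemma~\ref{lem:rand-nonloc-attain} is \emph{not} uniform in the regularization parameter. From its proof, the sub-Gaussian constant obeys
\(
\kappa_1\ge \underline{Z}^{-1}\exp\!\bigl(\tfrac{1}{2-4\kappa_2}\bigl(\tfrac{L_\phi+L_l}{\lambda}\bigr)^2\bigr)
\)
with $\underline{Z}=\EE_{\zeta\sim\cN(0,1)}\bigl[e^{-(L_\phi+L_l)|\zeta|/\lambda}\bigr]$, so $\kappa_1\to\infty$ as $\lambda\to0$. Since $\mathscr{E}_R=2L_\phi\Delta_R$ with $\Delta_R=(R+\tfrac{1}{2\kappa_2R})\cdot 2\kappa_1e^{-\kappa_2R^2}$, for every fixed $R$ one has $\mathscr{E}_R\to\infty$ as $\lambda_2^{m_k}\to0$. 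Your ``first $R$ large, then $k$ large'' ordering therefore collapses: once $R$ is frozen, $\limsup_k\mathscr{E}_R^{(k)}=+\infty$, and the bound $\|\psi^{\blambda^{m_k}}-\psi^\infty\|_{L^\infty([-R,R])}+\mathscr{E}_R^{(k)}$ is vacuous. Rescuing your approach would require a tail estimate for the Gibbs minimizers $\mu_x^\lambda$ that is uniform as $\lambda\downarrow0$; this may well be true (the measures concentrate), but it is a new ingredient not supplied by the cited lemma.

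There is also a secondary issue: the ``routine extension'' of Lemma~\ref{lemma:semi-randomized_nonlocal_bound} is not routine. That proof uses Assumption~\ref{assump:boundedness}\ref{assump:bounded_continuation_region}---the uniform boundedness of the classical continuation regions $\mathfrak{C}^n\subset W\subset K$---to ensure the optimal stopping state $X_{\tau^*}$ lands in $K$, which is what localizes the obstacle comparison. For an abstract bounded Lipschitz obstacle $\psi^\infty$ (not yet known to equal $\psi$), you have no control on the associated continuation region, and Lipschitzness plus boundedness alone do not give it. The paper sidesteps this by only ever invoking the lemma on the specific iterates $\psi^{n-1}$, for which the assumption is stated directly.
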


\section{Reinforcement Learning for Randomized Impulse Control}\label{sec:rl}

Recall that in \Cref{sec:existence}, we have provided an {iterative approach} that eventually converges to the solution to the randomized impulse control problem in \Cref{subsec:iter-construction} . In this section, we show that this iterative approach actually serves as the foundation for the design of a reinforcement learning algorithm to numerically compute the solution in Definition~\ref{defn:rand-impulse} with a temporal–difference (TD) loop. First, we establish a geometric convergence result for the iterative approach under a fixed randomization parameter \(\blambda\in(0,+\infty)\). Then, we describe the corresponding TD-error algorithm. Finally,
we use a linear benchmark problem to illustrate the performances of this algorithm for different \(\blambda\)'s.

\subsection{Geometric convergence of the policy improvement algorithm}
\Cref{thm:policy_improvement,thm:iter-value-fct} have guaranteed the convergence for the iterative approach introduced in \Cref{subsec:iter-construction} to the value function in Definition~\ref{defn:rand-impulse}. We present the following geometric convergence rate for this approach under the additional standing  Assumption~\ref{assump:boundedness}. The proof is presented in Appendix~\ref{app:a-5}.
\begin{theorem}\label{thm:geom-rate}
There exists some $q\in (0,1)$, such that for any $n\in\NN$,
\[\|\psi^{\blambda}-\psi^{\blambda,n}\|_{L^\infty(\RR)} \leq \frac{q^n}{1-q}\|\psi^{\blambda}-\psi^{\blambda,0}\|_{L^\infty(\RR)}.\]
Therefore, the policy improvement scheme \eqref{eq:policy_improvement_psi0} and \eqref{eq:policy_improvement_psin_value_definition} converge geometrically.
\end{theorem}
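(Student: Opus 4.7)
I plan to establish a one-step contraction
\[
\|\psi^{\blambda,n+1}-\psi^{\blambda,n}\|_{L^\infty(\RR)}\leq q\,\|\psi^{\blambda,n}-\psi^{\blambda,n-1}\|_{L^\infty(\RR)}
\]
for some $q\in(0,1)$ independent of $n$, and then telescope. By the monotonicity of the iterates and their pointwise convergence to $\psi^{\blambda}$ established in Theorem~\ref{thm:policy_improvement}, for every $x\in\RR$ I will have
$0\leq\psi^{\blambda,n}(x)-\psi^{\blambda}(x)\leq\sum_{k\geq n}\|\psi^{\blambda,k+1}-\psi^{\blambda,k}\|_{L^\infty(\RR)}\leq\tfrac{q^n}{1-q}\|\psi^{\blambda,1}-\psi^{\blambda,0}\|_{L^\infty(\RR)}$, and monotonicity again will give $\|\psi^{\blambda,1}-\psi^{\blambda,0}\|_{L^\infty(\RR)}\leq\|\psi^{\blambda}-\psi^{\blambda,0}\|_{L^\infty(\RR)}$, yielding the stated bound.

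For the one-step contraction, I fix $x\in\RR$ and use the optimal feedback intensity $\pi^{n+1}$ from \eqref{eq:policy_improvement} (associated with $\psi^{\blambda,n+1}=\stp^{\blambda}\psi^{\blambda,n}$) as a suboptimal candidate when evaluating $\stp^{\blambda}\psi^{\blambda,n-1}$. This yields
\[
0\leq\psi^{\blambda,n}(x)-\psi^{\blambda,n+1}(x)\leq\E[x]{\int_0^\infty e^{-\int_0^t(r+\pi^{n+1}_s)\dd{s}}\pi^{n+1}_t\bigl(\nonlocal^{\lambda_2}\psi^{\blambda,n-1}(X_t)-\nonlocal^{\lambda_2}\psi^{\blambda,n}(X_t)\bigr)\dd{t}}.
\]
Combining the $L^\infty$-nonexpansiveness of $\nonlocal^{\lambda_2}$ from Lemma~\ref{lem:rand-nonloc-basic} with the integration-by-parts identity
\[
\int_0^\infty e^{-rt}p^{n+1}_t\pi^{n+1}_t\dd{t}=1-r\int_0^\infty e^{-rt}p^{n+1}_t\dd{t}
\]
(obtained from $p_0^{n+1}=1$ and $\dd p^{n+1}_t=-\pi^{n+1}_tp^{n+1}_t\dd{t}$) then bounds the right-hand side by $\bigl(1-r\E[x]{\int_0^\infty e^{-rt}p^{n+1}_t\dd{t}}\bigr)\|\psi^{\blambda,n}-\psi^{\blambda,n-1}\|_{L^\infty(\RR)}$.

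It thus remains to bound $\pi^{n+1}$ uniformly in $n$ and $x$. Under Assumption~\ref{assump:boundedness}, $\|f\|_\infty\leq M$ forces $\psi^{\blambda,0}\leq M/r$, and hence $\psi^{\blambda,n}\leq M/r$ for all $n$ by the monotonicity in Theorem~\ref{thm:policy_improvement}. An $n$-independent lower bound $\psi^{\blambda,n}\geq\underline{\psi}^{\blambda}$ is then obtained inductively: from $\psi^{\blambda,n-1}\geq\underline{\psi}^{\blambda}$, the intervention-cost bound $l\geq K$ together with nonnegativity of the KL term yield $\nonlocal^{\lambda_2}\psi^{\blambda,n-1}\geq\underline{\psi}^{\blambda}+K$, and the explicit minimization
\[
\min_{\pi\geq 0}\bigl\{-\lambda_1\RCal(\pi)+(\underline{\psi}^{\blambda}+K)\pi\bigr\}=-\lambda_1e^{-(\underline{\psi}^{\blambda}+K)/\lambda_1}
\]
combined with $f\geq 0$ propagates the floor through \eqref{eq:policy_improvement_psin_value_definition}; the value of $\underline{\psi}^{\blambda}$ is pinned down by a self-consistency equation that is solvable for every $\blambda\in(0,+\infty)^2$. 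Plugging both uniform bounds into \eqref{eq:policy_improvement} gives $\pi^{n+1}(x)\leq\exp\!\bigl((M/r-K-\underline{\psi}^{\blambda})/\lambda_1\bigr)=:\Pi<\infty$, so $p^{n+1}_t\geq e^{-\Pi t}$ and $\E[x]{\int_0^\infty e^{-rt}p^{n+1}_t\dd{t}}\geq 1/(r+\Pi)$, giving the contraction rate $q=\Pi/(r+\Pi)\in(0,1)$. I expect the main technical hurdle to be this uniform-in-$n$ lower bound $\underline{\psi}^{\blambda}$: because $\pi$ ranges over the unbounded half-line and the integrand is non-monotone in $\pi$, propagating a floor that does not deteriorate with $n$ requires leveraging the entropy-regularization structure as above rather than a crude comparison argument.
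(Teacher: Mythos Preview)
Your probabilistic suboptimality argument is a valid alternative to the paper's approach and leads to the same contraction constant $q=\Pi/(r+\Pi)$. The paper instead works at the PDE level: writing $w^n=\psi^{\blambda,n+1}-\psi^{\blambda,n}$, it subtracts the two instances of \eqref{eq:value_improvement}, linearizes the exponential nonlinearity via the mean-value theorem, and then applies a maximum-principle argument (evaluating $\cL w^n-rw^n$ at an interior extremum of $w^n$) to read off the contraction. Your control-theoretic route is more elementary in that it avoids any maximum principle and uses only the suboptimality of $\pi^{n+1}$ for the $(n{-}1)$-th problem together with the pathwise integration-by-parts identity for the survival weight.

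The common bottleneck in both arguments is a uniform-in-$(n,x)$ lower bound on $\nonlocal^{\lambda_2}\psi^{\blambda,n}(x)-\psi^{\blambda,n+1}(x)$ (equivalently, $\pi^{n+1}\leq\Pi<\infty$). Here your inductive self-consistency device has a genuine gap: the fixed-point relation $\underline{\psi}=-\tfrac{\lambda_1}{r}e^{-(\underline{\psi}+K)/\lambda_1}$ is \emph{not} solvable for every $\blambda$. With $u=-\underline{\psi}\geq0$, the required inequality $ru\geq\lambda_1 e^{(u-K)/\lambda_1}$ fails identically once $\lambda_1>K/(1-\log r)$ (for $r<e$), since the concave function $u\mapsto ru-\lambda_1 e^{(u-K)/\lambda_1}$ then has strictly negative maximum. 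The paper avoids this by using the already-established limit from Theorem~\ref{thm:policy_improvement}: monotonicity of the iterates and of $\nonlocal^{\lambda_2}$ gives
\[
\nonlocal^{\lambda_2}\psi^{\blambda,n}-\psi^{\blambda,n+1}\;\geq\;\nonlocal^{\lambda_2}\psi^{\blambda}-\psi^{\blambda,0},
\]
and under Assumption~\ref{assump:boundedness} the right-hand side is bounded below uniformly in $x$ (using $\psi^{\blambda,0}\leq M/r$ and a lower bound on $\nonlocal^{\lambda_2}\psi^{\blambda}$ coming from the Lipschitz structure of $\psi^{\blambda}$ and $l$ together with $x+\xi\sim\cN(0,1)$ under $\Phi_x$). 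Replacing your self-consistency step by this monotone comparison to the limit closes the gap, and the remainder of your argument goes through unchanged.
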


\subsection{A Temporal-Difference Error Method}\label{subsec:experiments}

Now, we present a reinforcement learning algorithm for the randomized impulse control problem with a TD loop. In particular, we use a TD scheme to estimate the ``policy
evaluation'' step inside the randomized impulse–control framework. Our code is publicly available at \url{https://github.com/Zhouhao-Yang/A-Two-fold-Randomization-Framework-for-Impulse-Control-Problems}.

\revise{
Throughout this section we work in a simulator-based setting: we assume the structural form of the uncontrolled dynamics and costs, but we do \emph{not} assume that the drift $b$, volatility $\sigma$, or cost functions $f$ and $l$ are known in closed form. Instead, the algorithm only interacts with the environment through simulated trajectories and pointwise cost evaluations. 
All of our updates are therefore expressed in terms of these simulated paths  and cost samples, without requiring explicit knowledge of $b$, $\sigma$, $f$, or $l$.
}

We fix a uniform step size $\Delta t>0$ and horizon
$T=M\Delta t$.  At each grid point $t_i=i\Delta t$, we propagate
\begin{align}
\label{eq:euler_maruyama}
X_{t_{i+1}} &= X_{t_i}+b\left(X_{t_i}\right)\Delta t
           +\sigma\left(X_{t_i}\right)\sqrt{\Delta t}\,\xi_i, \quad
           \xi_i\sim\mathcal N(0,1),\\
\label{eq:survival_weight}
p_{t_{i+1}} &= \bigl(1-\pi(X_{t_i})\Delta t\bigr)\,p_{t_i},
\end{align}
where \eqref{eq:euler_maruyama} is the Euler--Maruyama discretization of the
controlled SDE and \eqref{eq:survival_weight} tracks the {survival
probability} $p_t$, which penalizes premature impulses through the randomized
policy~$\pi$.

Recall that
\[\psi^{\blambda,n+1}(x) =\EE^{x}\left[\int_0^\infty e^{-rs}p_s(f(X_s)+\pi_s\nonlocal^{\lambda_2}\psi^{\blambda,n}(X_s)-\lambda_1\cR(\pi_s))\Big|p_0=1\right].\]
Assume $\psi^{\blambda,n+1}$ is approximated via 
the value network $\psi_\theta$ with parameters $\theta$. Then $\psi_\theta$ ideally satisfies the
one–step Bellman equation
\begin{equation*}
 \begin{aligned}
&p_{t_i}\psi_\theta(X_{t_i}) \\
=&\;
   p_{t_i}\Bigl[f(X_{t_i})
     +\pi_{t_i}\nonlocal^{\lambda_2}\psi^{\blambda,n}(X_{t_i})
     -\lambda_1\cR(\pi_{t_i})\Bigr]\Delta t+e^{-r\Delta t}p_{t_{i+1}}
      \EE\bigl[\psi_\theta(X_{t_{i+1}})| X_{t_i}\bigr].
\end{aligned}   
\end{equation*}
Rearranging and using \eqref{eq:survival_weight} yields the instantaneous TD error
\begin{equation}\label{eq:td_error}
\begin{aligned}
\delta_i(\theta)
&\coloneqq
  \psi_\theta(X_{t_i})
  -e^{-r\Delta t}\bigl(1-\pi_{t_i}\Delta t\bigr)
     \psi_\theta(X_{t_{i+1}})\\
&\hspace{30pt}  -\Bigl[f(X_{t_i})
+\pi_{t_i}\nonlocal^{\lambda_2}\psi^{\blambda,n}(X_{t_i})
     -\lambda_1\mathcal R(\pi_{t_i})\Bigr]\Delta t,
\end{aligned}
\end{equation}
Given $B$ simulated paths
$\{X_{t_i}^b\}_{i=0}^M$ ($b=0,1,\dots,B-1)$,
the empirical loss minimized via stochastic gradient descent is
\begin{equation}\label{eq:td_loss}
\mathcal L(\theta)
  \coloneqq
  \frac{1}{B}\sum_{b=0}^{B-1}\sum_{i=0}^{M-1}
      \bigl[\delta_i^b(\theta)\bigr]^2,\qquad
\delta_i^b\text{ given by }(\ref{eq:td_error})\text{ on path }b.
\end{equation}

Algorithm~\ref{alg:td_impulse} summarizes the complete fixed‑point / TD‑error
routine.  Each outer loop (index~$n$) recomputes the
Monte‑Carlo estimate of $\nonlocal^{\lambda_2}\psi^{\blambda,n}$ and then performs
$K$ inner gradient steps with learning rate $\eta$ to minimize
\eqref{eq:td_loss}.  The next outer loop starts from the last inner
iterate, ensuring monotone policy improvement.
\begin{algorithm}[htbp]
\caption{\textbf{Reinforcement Learning Algorithm with TD-Error for Randomized Impulse Control}}
\label{alg:td_impulse}
\begin{algorithmic}[1]
\REQUIRE
  randomization parameters $\blambda=(\lambda_1,\lambda_2)$; neural network $\psi(\cdot;\cdot)$;
  initial value function $\psi^0$;
  roll-out batch size $B$;
  time step $\Delta t$ and horizon $T=M\Delta t$;
  fixed-point iterations $N$;
  gradient steps per outer loop $K$;
  learning rate $\eta$;
  initial-state distribution $\rho$ on $[x_{\min},x_{\max}]$; SDE drift and diffusion $b$, $\sigma$; entropy function $\cR(\cdot)$.
\STATE Initialize network parameters $\theta_{0}^{0}$ with $\psi(\cdot;\theta_0^0) = \psi^0(\cdot)$.
    \FOR{$b=0, 1,\dots,B-1$}
        \STATE Sample $X_0^b \sim \rho$.
        \STATE Simulate $\{X_{t_i}^b\}_{i=0}^{M}$ via the controlled SDE drift/diffusion.

    \ENDFOR
\FOR{$n=0,1,\dots,N-1$}                             
    \STATE $\psi^{n}(\cdot) \gets \psi(\cdot;\theta_{0}^{n})$.
    \STATE For all $i$, compute $ \nonlocal^{\lambda_2}\psi^n(X_{t_i}^b)$ (Monte Carlo method).
    \FOR{$k=0,1,\dots,K-1$}
        \FOR{all $(b,i)$} \STATE $\displaystyle
            \pi_{t_i}^b = e^{-\frac{p_{t_i}^b - \psi(X_{t_i}^b;\theta_{k}^{n})}{\lambda_1}}$; clip if needed.
    \ENDFOR
        \STATE {\small{$\displaystyle
          \cL(\theta_{k}^{n})
          =\frac{1}{B}\sum_{b=0}^{B-1}\sum_{i=0}^{M-1}\Big[
            \psi(X_{t_i}^b;\theta_{k}^{n})
            -e^{-r\Delta t}(1-\pi_{t_i}^b\Delta t)\psi(X_{t_{i+1}}^b;\theta_{k}^{n})
            -\Delta t\big(f(X_{t_i}^b)+\pi_{t_i}^b\nonlocal^{\lambda_2}\psi^n(X_{t_i}^b)-\lambda_1\cR(\pi_{t_i}^b)\big)
          \Big]^2$.}}
        \STATE $\theta_{k+1}^{n} \gets \theta_{k}^{n} - \eta \nabla_\theta \cL(\theta_{k}^{n})$.
    \ENDFOR
    \STATE $\theta_{0}^{n+1} \gets \theta_{K}^{n}$ . 
\ENDFOR
\STATE \RETURN $\psi^N(\cdot)=\psi(\cdot;\theta_{0}^{N})$.
\end{algorithmic}
\end{algorithm}

\subsection{Experimental Setting}
We assess \cref{alg:td_impulse} on the following one–dimensional impulse–control problem, where the drift and volatility of the state dynamic are constants, and the running cost and transaction cost are linear functions.

Specifically, let \(b(\cdot)\equiv\mu\in\RR\) and \(\sigma(\cdot)\equiv\sigma\in \RR^+\) so that the uncontrolled state dynamic $(X_t)_{t\ge0}$ in \eqref{eq:sde-uncontrolled} satisfies
\[
  \dd X_t = \mu\dd t + \sigma\dd W_t, \qquad \sigma>0,
\]
Let $r>0$ be the discount factor. The running cost $f:\RR\to\RR$ and impulse
cost $l:\RR\to\RR_+$ are given by
\[
  f(x) =
  \begin{cases}
    h x, & x \ge 0,\\[2pt]
    -p x, & x \le 0,
  \end{cases}
  \qquad
  l(\xi) =
  \begin{cases}
    K_+ + k_+ \xi, & \xi \ge 0,\\[2pt]
    K_- - k_- \xi, & \xi < 0,
  \end{cases}
\]
with constants $h,p,K_\pm,k_\pm>0$.  Recall the classical nonlocal operator with test function \(\phi\),
\[
  \nonlocal\phi(x) := \inf_{\xi\in\RR}\{\phi(x+\xi) + l(\xi)\}.
\]
Finally, assume the nontriviality and growth conditions
\begin{equation}\label{eq:Sec5-growth}
  h - r k_- > 0,
  \qquad
  p - r k_+ > 0.
\end{equation}
Here, we specify the following choices of model parameters.
\begin{itemize}
  \item \textbf{Time grid:} we fix a total time horizon \(T = 20\) with step size
        \(\Delta t = 0.01\) (i.e., total time steps \(M = 2000\)).
  \item \textbf{Optimization:} we adopt a total number of outer fixed‑point iterations
        \(N = 30\) with inner gradient steps \(K = 400\); the optimizer \texttt{AdamW} is set with
        learning rate \(\eta = 10^{-3}\) and weight decay \(10^{-4}\).
  \item \textbf{Dynamics:} constant drift is set to be \(\mu=0.03\) and the default volatility is set to be
        \(\sigma=0.2\). The SDE dynamics is sampled via the Euler--Maruyama method:
        \[X_{t+\Delta t} -X_t= \mu \Delta t+\sigma  W_{t,\Delta t}, \quad W_{t,\Delta t}\overset{i.i.d.}{\sim}\cN(0,\Delta t).\]
  \item \textbf{Cost functions:} we use the following cost configuration,\\
        \(
          f(x) =
          \begin{cases}
            x   & x \ge 0\\
            -x  & x < 0
          \end{cases}
        ,\)
        \(
          \ell(x) =
          \begin{cases}
            2 + 0.5x  & x \ge 0\\
            2 - 0.5x  & x < 0
          \end{cases}
        ,\)
        discount rate \(r = 0.1\).
  \item \textbf{Batch size:} $B = 4096$.
  \item \textbf{Randomization:} we vary the randomization parameter 
        \(\blambda = (\lambda_1,\lambda_2)\) within the following choices:
        \(\lambda_1 \in \{1.0,\,0.5,\,0.1,\,0.05\}\) and
        \(\lambda_2 = \lambda_1\); denote \\\(\Lambda\coloneqq \{(1.0, 1.0),(0.5,0.5),(0.1,0.1),(0.05,0.05)\}\).
\end{itemize}

\revise{\paragraph{Classical model-based baseline}
For comparison, we also implement a classical, model-based baseline on the same example.
In this baseline we assume that only discrete observations of the uncontrolled diffusion
$\{X_{t_i}^b\}$ are available, and we first estimate the drift and volatility via the
standard moment estimators
\[
\widehat{b}
= \frac{1}{N\Delta t}\sum_{i=0}^{N-1}(X_{t_{i+1}}-X_{t_i}),
\qquad
\widehat{\sigma}^2
= \frac{1}{N\Delta t}\sum_{i=0}^{N-1}(X_{t_{i+1}}-X_{t_i})^2.
\]
These observations are simulated with the same time step $\Delta t$, horizon $T$ and
number of paths as those used in \cref{alg:td_impulse}, so that the baseline and the randomized
RL method operate under the same sampling budget. Given $(\widehat{b},\widehat{\sigma})$, we address the classical impulse-control problem by iteratively solving the HJB–QVI
\[
\min\bigl\{\hat\cL\psi^n - r\psi^n + f,\; \nonlocal\psi^{n-1} - \psi^n\bigr\} = 0,
\qquad n\in\mathbb{N}^+,
\]
where $\hat\cL$ is the generator associated with $(\widehat{b},\widehat{\sigma})$. We implement this policy-iteration scheme on a spatial grid using a finite-difference discretization of $\hat\cL$ and a projected Gauss–Seidel solver for the resulting obstacle problem.
}

\rerevise{For a fair comparison, both the baseline and Algorithm~\ref{alg:td_impulse} operate
under an identical sampling budget: $B = 4096$ independently simulated trajectories of
length $T/\Delta t = 2000$ steps. For the baseline, these trajectories are used once
to estimate $\hat{b}$ and $\hat{\sigma}$ and to solve the HJB--QVI on a spatial grid.
For Algorithm~\ref{alg:td_impulse}, the same $B$ paths are reused across $N \cdot K$
gradient updates, and no additional data is generated during training.}

\subsection{Experimental Results}
We now present the experimental results under the above setting. We are interested in the performance of \cref{alg:td_impulse} as \(\|\blambda\|_\infty:=\lambda_1\vee\lambda_2\) decreases and whether the convergence results in \Cref{thm:policy_improvement,thm:comparison_convergence} can be observed numerically. Besides, we are also interested in the sensitivity analysis of the volatility parameter \(\sigma\). This parameter modulates the intrinsic stochasticity of the impulse control problem, regardless of being randomized or not, and we want to demonstrate its impact on the corresponding value function as well as the optimal policy under randomization. 

Each figure reports the average over five runs runs with distinct random seeds. All simulations are implemented with \texttt{PyTorch}. 

\subsubsection{Comparison between randomized value functions and their classical counterpart}
In \cref{fig:main_figure}, we present the comparison results between the randomized and the classical value functions, as \(\blambda\) varies in \(\Lambda\). We plot the corresponding value functions in \cref{subfig:value_functions}, where the randomized value functions \(\psi^N\) are the final outputs of \cref{alg:td_impulse} and the classical value function \(\psi\) is the semi-explicit analytical solution as in \cite{constantinides1978existence,guo2009smooth}.  It can be observed that as \(\blambda\) tends to \((0,0)\), the randomized solution
approaches the classical benchmark \(\psi\), which numerically confirms the theoretical
limit
\(
  \psi^{\blambda} \xrightarrow[]{\|\blambda\|_\infty \to 0} \psi
\) as proved in \Cref{thm:comparison_convergence}.

\revise{In addition to the analytic classical value function $\psi$, \cref{subfig:value_functions} also displays a classical, model-based baseline (the curve labeled as ``Baseline''). The baseline curve exhibits a noticeable upward bias relative to the analytic classical solution, both in level and in the location of the continuation region. This bias can be traced to two main sources: (i) the inaccuracy of the HJB--QVI solver as we could only solve for a finite region; and (ii) the lack of robust convergence of the classical fixed-point policy iteration. By contrast, our method circumvents these problems, and the randomized value functions learned by \cref{alg:td_impulse}  remain much closer to the analytic solution across the state space.}

For each \(\blambda\in\Lambda\), we also monitor the evolution of \(L_2\) relative error between the output of each outer iteration \(\psi^n\) in \cref{alg:td_impulse} and the classical value function \(\psi\), namely, \(
  \| \psi^{n} - \psi \|_2 / \| \psi \|_2
\) for \(n=0,\dots,N\). In \cref{subfig:rel_L2_loss}, we observe that for each \(\blambda\), this relative error decreases and finally stabilizes as the training proceeds, and such a trend coincides with the convergence of the iterative approach in \Cref{thm:policy_improvement}. Moreover, we can see that when \(\|\blambda\|_\infty=0.05\), the lowest among \(\Lambda\), the final relative errors reach the lowest level, which once again is a numerical demonstration of \Cref{thm:comparison_convergence}; meanwhile, the evolution under \(\|\blambda\|_\infty=0.05\) displays the largest variance. Empirically, it comes from  the numerical inaccuracy of the exponential term
\(\exp(\cdot/\lambda_1)\) in policy evaluation and \(\exp(\cdot/\lambda_2)\) in Monte-Carlo estimation.
Conversely, larger \(\|\blambda\|_\infty\) may introduce
bias, but it may enhance stability at the same time. This tradeoff partially explains why \(\lambda_1 = 1\) is not the
worst performer.

\captionsetup[subfigure]{labelformat=parens} 
\renewcommand{\thesubfigure}{\alph{subfigure}} 

\begin{figure}[t]
  \centering
  \begin{subfigure}[t]{.48\textwidth}
    \centering
    \includegraphics[width=\linewidth]{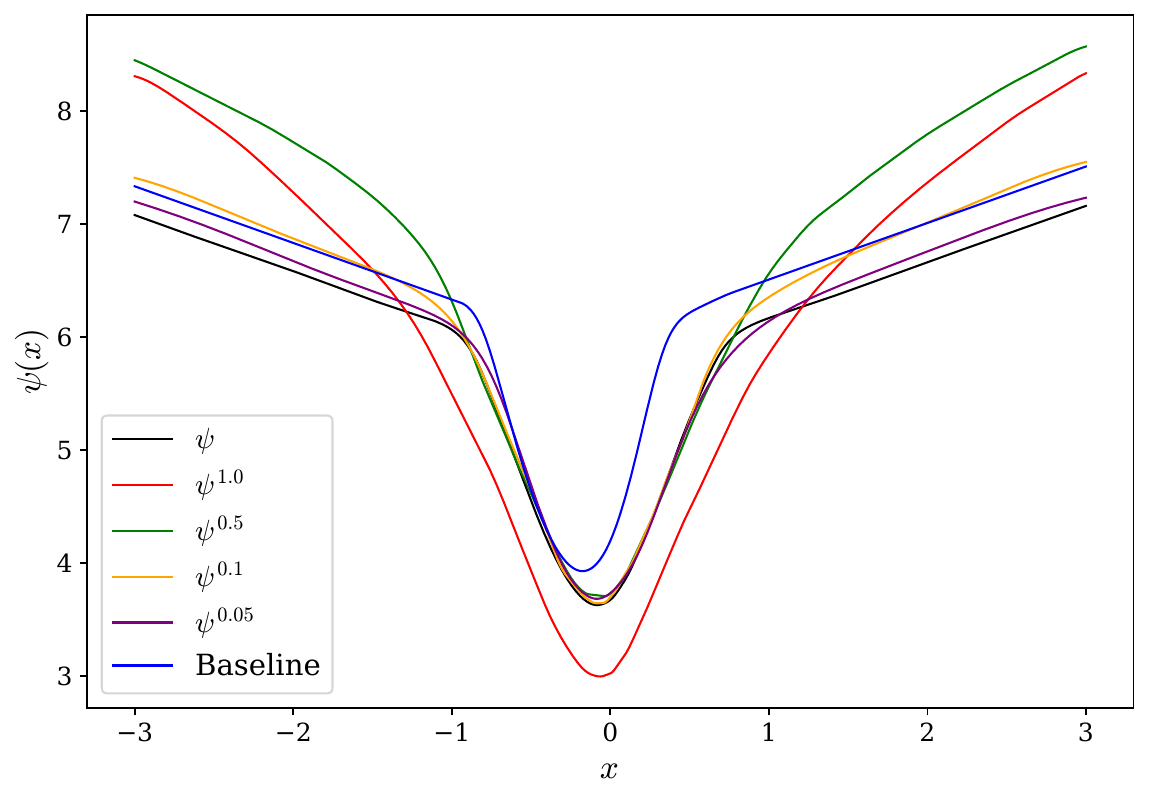}
    \caption{Value functions.}
    \label{subfig:value_functions}
  \end{subfigure}\hfill
  \begin{subfigure}[t]{.48\textwidth}
    \centering
    \includegraphics[width=\linewidth]{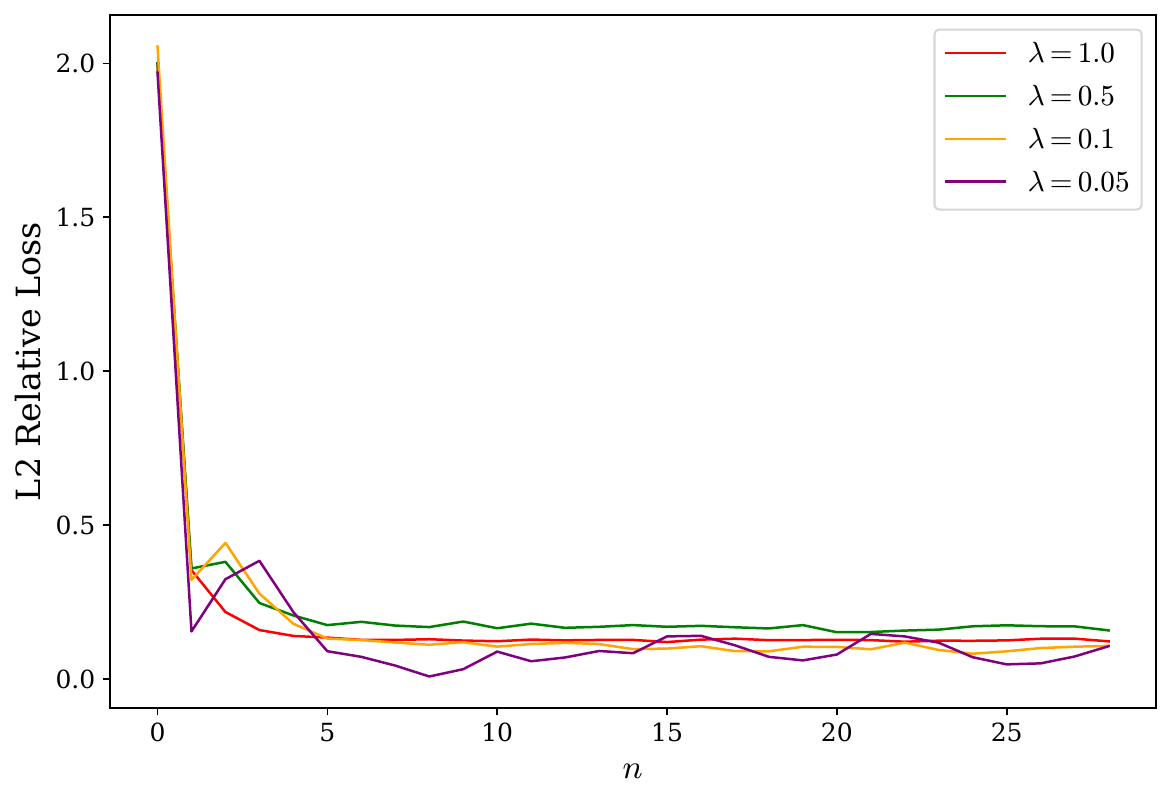}
    \caption{\(L_2\) relative errors evolution in outer iterations.}
    \label{subfig:rel_L2_loss}
  \end{subfigure}
\caption{Comparison in value functions between randomized and classical impulse control with \(\blambda\in\Lambda\).}
  \label{fig:main_figure}
\end{figure}


\subsubsection{Sensitivity Analysis}
We investigate how the diffusion volatility $\sigma$ influences the learning dynamics and the resulting randomized solutions. 
For each $\sigma \in \{0.1, 0.2, 0.3, 0.4\}$, we run \cref{alg:td_impulse} with $\blambda = (0.5,0.5)$ while
keeping all other model parameters and hyperparameters fixed, including the training budget, the optimizer, the step size, the number of Monte Carlo samples for the nonlocal term, and the evaluation grid. Results are demonstrated in \cref{fig:sigma-sensitivity}. 

For the classical impulse control in \cite{constantinides1978existence}, the size of the continuation region \(|\mathfrak{C}|=O\left(\sigma^{\frac23}+\sigma\right)\), and the jump distance \(|\xi^*(x)|=O\left(\sigma^{\frac23}\right)\) for \(x\) on the boundary of the continuation region. That is, as \(\sigma\) increases, the controller intervenes less frequently, but once hitting the action boundary from inside the continuation region, the jump size is larger. Intuitively, with a larger \(\sigma\) value, the controller may make use of the extra randomness when deciding whether to intervene, but may need to compensate for the extra uncertainty by jumping further into the continuation region during each intervention. Consequently, by accumulating more running cost and paying higher cost of intervention, the corresponding value function tends to increase along with \(\sigma\). We can observe similar trends under the randomized setting. For instance, in \cref{subfig:sensitivity_lam_0.5}, we observe that the randomized value function monotonically increases as \(\sigma\) increases. In \cref{subfig:sensitivity_pi_lam_0.5}, we visualize the optimal intensities \(\pi^*\) for the survival processes under different values of \(\sigma\). As \(\sigma\) increases, the flat region where \(\pi^*\) remains close to \(0\) (i.e., it is more likely for the controller to wait) keeps expanding. This observation coincides with the enlarging continuation region in the classical problem. 

Since \(\sigma\) indicates the intrinsic stochasticity of the model, it may require a controller to do more ``exploration'' as \(\sigma\) increases under the randomized setting. In \cref{subfig:sensitivity_pi_lam_0.5}, when \(x\) exits the flat region, the intensity of intervention drops as \(\sigma\) increases, that is, the controller becomes less committed to exploiting the intervention decision. In \cref{subfig:mu_star_-2.0,subfig:mu_star_0.0,subfig:mu_star_2.0}, we plot the optimal jump distributions at \(x\in\{-2,0,2\}\), representing the ``jump to the right'', continuation, and ``jump to the left'' regions in the classical setting, respectively. We can see that, while all the optimal jump distributions \(\mu^*\) concentrate around the corresponding optimal jumps in the classical setting, as \(\sigma\) increases, \(\mu^*\) displays a larger variance, indicating the tendency to conduct more exploration among possible jumps. 

Additional experimental results to compare with the setting $\blambda = (1.0, 1.0)$ can be found in \cite[Appendix B]{cao2025two}.

\begin{figure}[t]
  \centering
  \begin{subfigure}[htb]{.48\textwidth}
    \centering
    \includegraphics[width=\linewidth]{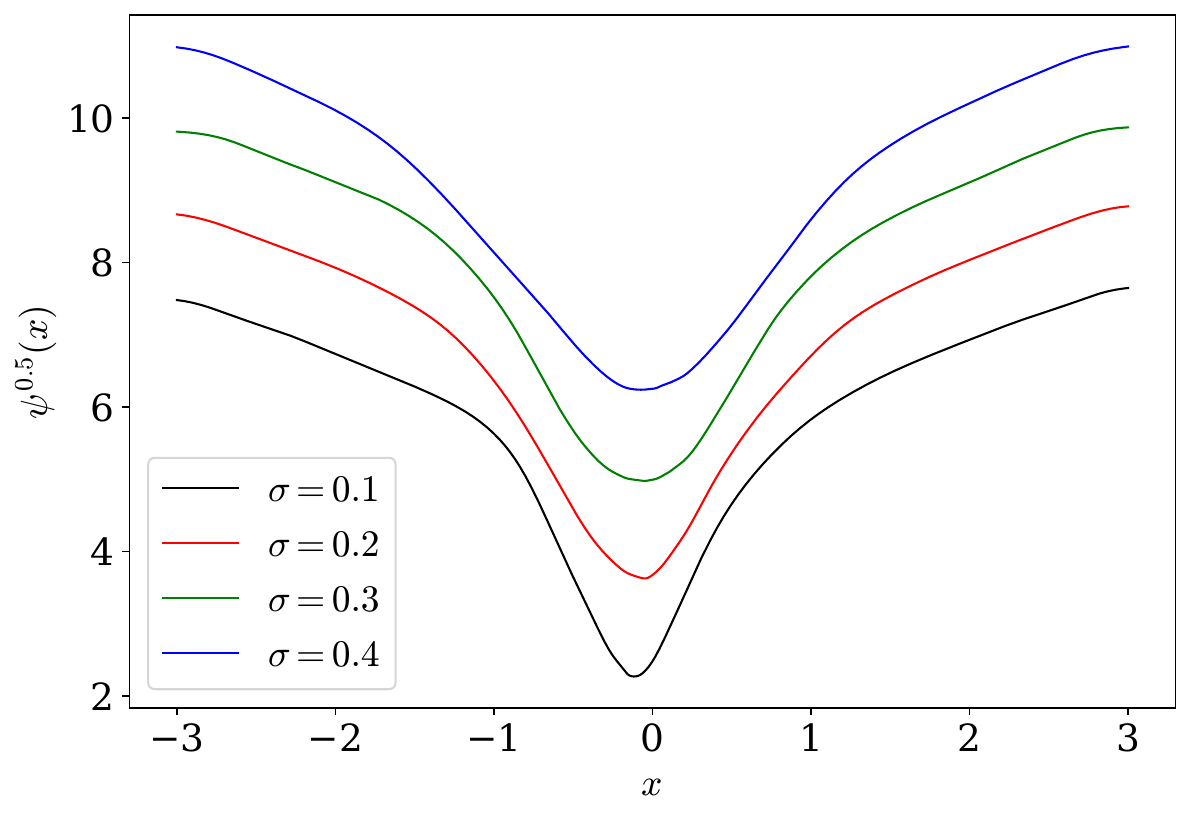}
    \caption{Value functions.}
    \label{subfig:sensitivity_lam_0.5}
  \end{subfigure}
  \hfill
  \begin{subfigure}[htb]{.48\textwidth}
    \centering
    \includegraphics[width=\linewidth]{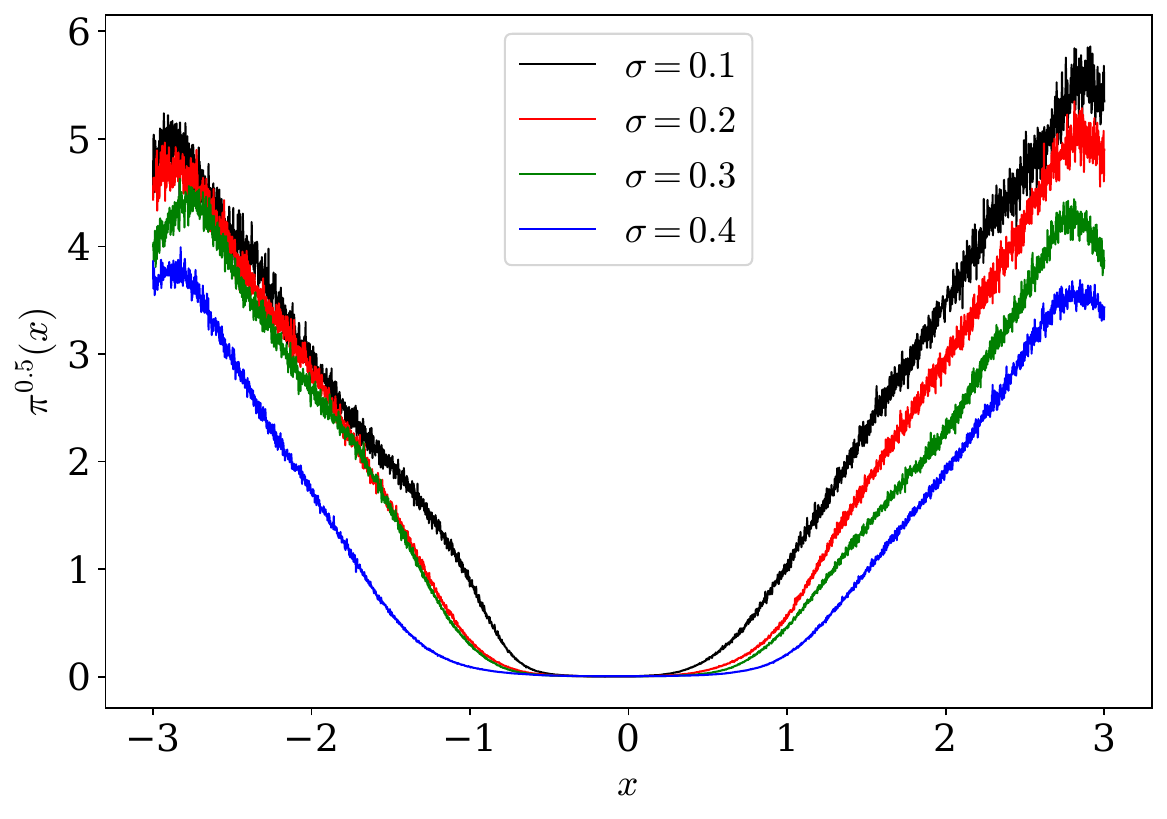}
    \caption{Intensities of interventions.}
    \label{subfig:sensitivity_pi_lam_0.5}
\end{subfigure}
\\
 \begin{subfigure}[htb]{.33\textwidth}
    \centering
    \includegraphics[width=\linewidth]{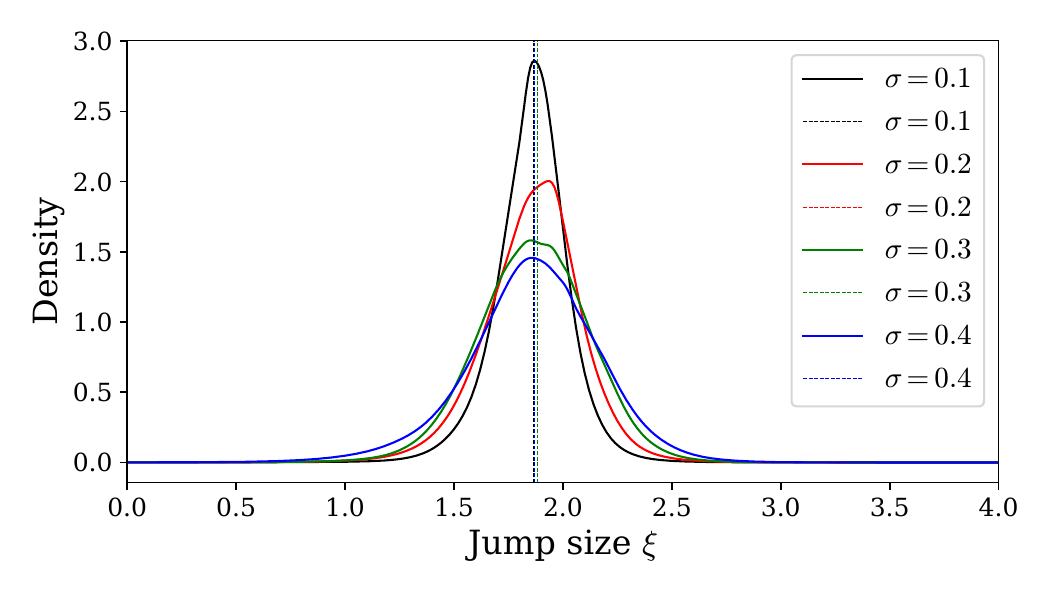}
    \caption{Jumps at $x = -2.0$.}
    \label{subfig:mu_star_-2.0}
  \end{subfigure}\hfill
  \begin{subfigure}[htb]{.33\textwidth}
    \centering
    \includegraphics[width=\linewidth]{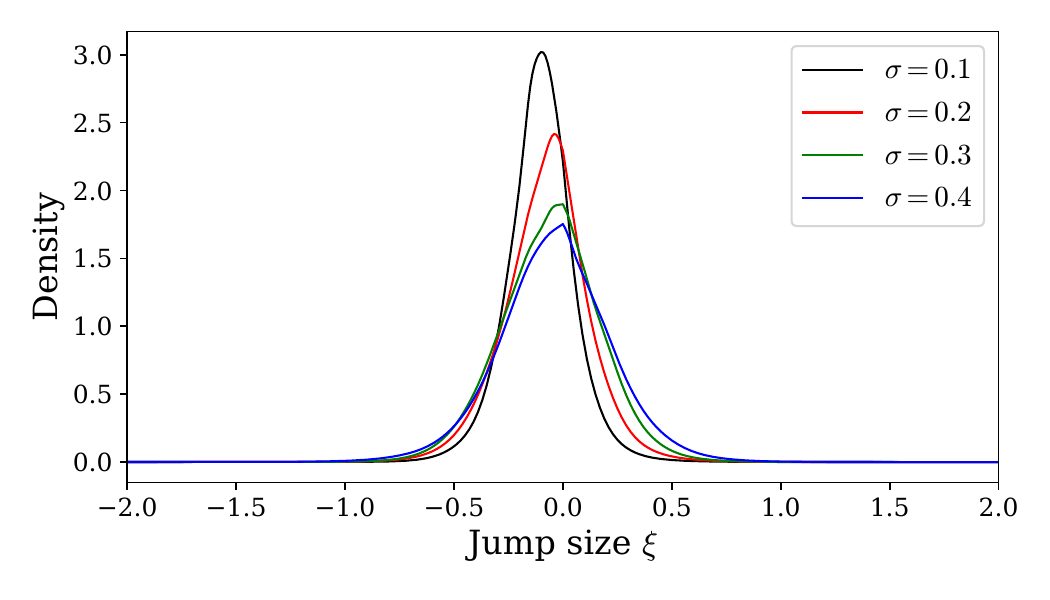}
    \caption{Jumps at $x = 0$.}
    \label{subfig:mu_star_0.0}
  \end{subfigure}\hfill
    \begin{subfigure}[htb]{.33\textwidth}
    \centering
    \includegraphics[width=\linewidth]{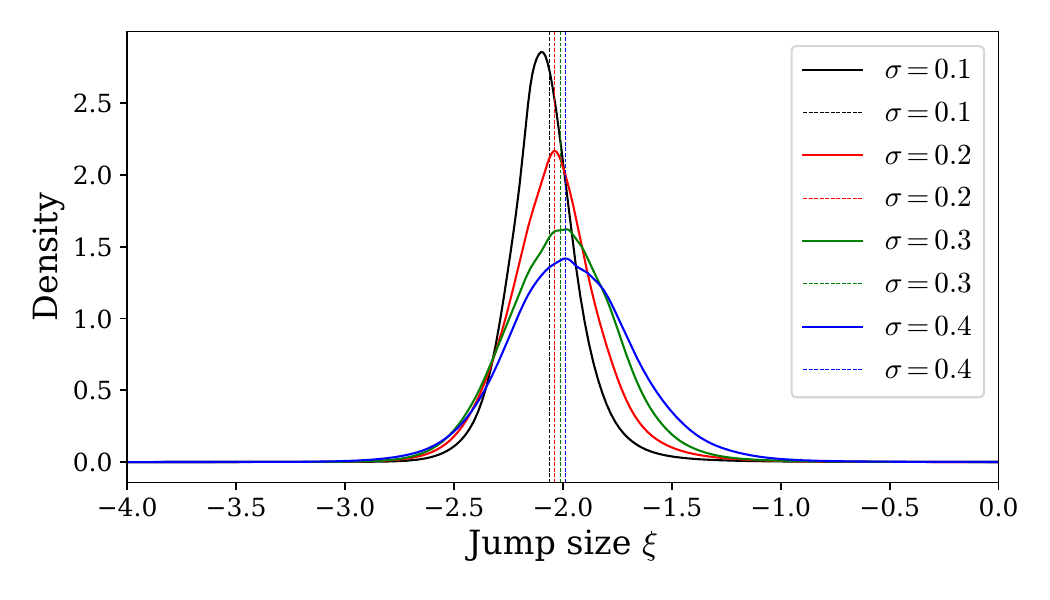}
    \caption{Jumps at $x = 2.0$.}
    \label{subfig:mu_star_2.0}
  \end{subfigure}
  \caption{Sensitivity analysis with respect to volatility $\sigma$ for Algorithm~\ref{alg:td_impulse} with \(\blambda=(0.5,0.5)\). 
  }
  \label{fig:sigma-sensitivity}
\end{figure}

\section{Conclusion}
In this work, we introduce a novel randomization framework for impulse control problems. Through a compound operator \eqref{eq:inf-randomized-impulse-control} consisting of the entropy--regularized versions of the nonlocal and the optimal stopping operators \eqref{eq:rand-nonloc} and \eqref{eq:rand-stp}, we characterized the solution to the randomized problem as a fixed point, which allows us to derive a semi-linear equation \eqref{eq:inf_st_HJB}. We rigorously prove the uniqueness of the solution via a verification theorem, thanks to an equivalent randomization scheme with a compound Poisson measure. We also show the existence of the solution using an iterative approach and prove a \(\cC^{2,\alpha}_{loc}\) regularity for the value function. The existence--uniqueness result thereby ensures the theoretical well-definedness of our randomized problem.

An important finding of this work is the convergence of the randomized solution to its classical counterpart. This, along with the regularity of the value function, shows that our framework is a robust and effective tool for approximating classical impulse control problems. In particular, the iterative approach for the existence proof naturally gives rise to \Cref{alg:td_impulse}, where the policy improvement steps enjoy a geometric convergence rate. Numerical experiments on a well-studied benchmark problem confirm that the model-free \Cref{alg:td_impulse} effectively learns a solution that closely approximates the classical solution. Furthermore, a sensitivity analysis highlights the inherent exploration--exploitation tradeoff within our randomized scheme. 

Ultimately, the randomization framework in this work provides not only a sound theoretical basis but also a practical, implementable method for solving complex impulse control problems.

\paragraph{Limitations and Future Work}
Our two-fold randomization framework and the associated RL algorithm are expected to extend to higher dimensions, while the main additional challenge concerns with the analysis of the nonlocal operator. In higher dimensions, the uncontrolled state process satisfies $X_t \in \mathbb{R}^d$ and the intervention distribution becomes a law on $\mathbb{R}^d \times [0,\infty)$, so that the jump \emph{size} and \emph{direction}, as well as the intervention time, are randomized. The corresponding randomized HJB equation has the same structural form. Under similar structural assumptions on the drift, diffusion, and cost (i.e., Lipschitz continuity and at most linear growth), we expect well-posedness of the randomized problem (i.e., existence, uniqueness, and $C^2$-regularity) to carry over to $d>1$.

One foreseeable additional difficulty arises in the convergence result (Theorem~\ref{thm:comparison_convergence}), where we show that the value function of the randomized problem converges to that of the classical impulse-control problem as the randomization parameter vanishes. Our current proof relies on the one-dimensional setting, in particular the structure of the randomized nonlocal operator and the local estimates in Lemma~\ref{lem:rand-nonloc-attain}, which do not immediately extend to $\mathbb{R}^d$. In higher dimension, one must control both jump directions and geometry-induced effects in the nonlocal term, and hence extending Lemma~\ref{lem:rand-nonloc-attain} and the subsequent convergence argument to higher dimensions would require more delicate estimates for multidimensional operators.



\appendix
\section{Proofs of the Theoretical Results}\label{app:a}
\subsection{Proofs in Section~\ref{sec:prelim}}\label{app:a-1}
\begin{proof}[Proof of Lemma~\ref{lem:rand-nonloc-basic}]
    Notice that for any \(\phi\in\test\), since \(\phi\) is bounded from below and is at most of polynomial growth,
    \[-\infty<\inf_{x\in\RR}\phi(x)+K\leq\nonlocal^\lambda\phi(x)\leq \E[\xi\sim\Phi_x]{\phi(x+\xi)+l(\xi)}<\infty,\quad\forall x\in\RR.\]
    For any \(x\in\RR\) and any \(\epsilon>0\), \(\exists\mu_\epsilon\in\pcls{\RR}\) such that 
    \[\nonlocal^\lambda\phi_2(x)\leq\E[\xi\sim\mu_\epsilon]{\phi_2(x+\xi)+l(\xi)}+\lambda\kl{\mu_\epsilon}{\Phi_x}<\nonlocal^\lambda\phi_2(x)+\epsilon,\]
    and note that \(\phi_1-\phi_2\in L^\infty(\RR)\),
    \[
    \begin{aligned}\nonlocal^\lambda\phi_1(x)-\nonlocal^\lambda\phi_2(x)&<\E[\xi\sim\mu_\epsilon]{\phi_1(x+\xi)+l(\xi)}-\E[\xi\sim\mu_\epsilon]{\phi_2(x+\xi)+l(\xi)}+\epsilon\\
    &\leq\left\|\phi_1-\phi_2\right\|_{L^\infty(\RR)}+\epsilon.\end{aligned}\]
    Similarly, \(\nonlocal^\lambda\phi_2(x)-\nonlocal^\lambda\phi_1(x)<\left\|\phi_1-\phi_2\right\|_{L^\infty(\RR)}+\epsilon\) for any \(x\in\RR\). Thus, 
    \[\left\|\nonlocal^\lambda\phi_1-\nonlocal^\lambda\phi_2\right\|_{L^\infty(\RR)}\leq\left\|\phi_1-\phi_2\right\|_{L^\infty(\RR)}+\epsilon,\quad\forall\epsilon>0. \]
Sending $\epsilon\to0$ yields \(\left\|\nonlocal^\lambda\phi_1-\nonlocal^\lambda\phi_2\right\|_{L^\infty(\RR)}\leq\left\|\phi_1-\phi_2\right\|_{L^\infty(\RR)}\).
    The remaining results follow directly from the definition and particularly the linearity of the operator \(\nonlocal^\lambda\).
\end{proof}

\begin{proof}[Proof of Lemma~\ref{lem:rand-nonloc-attain}]
       As \(\phi\in\test\), \(\nonlocal^\lambda\phi(x)\in\RR\) for all \(x\in\RR\) by \ref{finite}. By \eqref{eq:rand-nonloc}, 
    \begin{equation}
        \label{eq:rand-nonloc-rho}
        \nonlocal^\lambda\phi(x)=\inf_{\rho\geq0,{\E[\zeta\sim\Phi_x]{\rho(\zeta)}=1}}\E[\zeta\sim\Phi_x]{\rho(\zeta)\left(\phi(x+\zeta)+l(\zeta)\right)+\lambda\rho(\zeta)\log\rho(\zeta)},\ \ \forall x\in\RR.
    \end{equation}
    Let us find the infimum of \eqref{eq:rand-nonloc-rho}. Notice that the mapping \((0,+\infty)\ni r\mapsto cr+\lambda r\log{r}\) is convex for any constant \(c\in\RR\), {whose minimum is attained at $r^* = \exp(-\tfrac{c+\lambda}{\lambda})$}. 
  
    For any \(x,\zeta\in\RR\), denote
    \[\tilde\rho_x(\zeta):=e^{-\frac{[\phi(x+\zeta)-\phi(0)]+[l(\zeta)-l(-x)]}{\lambda}},\quad\hat\rho_x(\zeta):=e^{-\frac{\phi(x+\zeta)+l(\zeta)}{\lambda}}=e^{-\frac{\phi(0)+l(-x)}{\lambda}}\tilde\rho_x(\zeta).\]
    Since \(\phi,l\in\lips\), 
    \[\tilde  Z(x)=\E[\zeta\sim\Phi_x]{\tilde\rho_x(\zeta)}=\E[\zeta\sim\cN(0,1)]{e^{-\frac{[\phi(\zeta)-\phi(0)]+[l(\zeta-x)-l(-x)]}{\lambda}}}\in(0,+\infty),\quad\forall x\in\RR.\]
    For any \(x\in\RR\), with \(\hat Z(x)=\E[\zeta\sim\Phi_x]{\hat\rho_x(\zeta)}=e^{-\frac{\phi(0)+l(-x)}{\lambda}}\tilde {Z}(x)\), we must have
    \[
    \begin{aligned}\nonlocal^\lambda\phi(x)&=\E[\zeta\sim\Phi_x]{\rho_x(\zeta)\left(\phi(x+\zeta)+l(\zeta)\right)+\lambda\rho_x(\zeta)\log\rho_x(\zeta)}\\
    &=-\lambda\log\E[\zeta\sim\Phi_x]{\exp\left\{-\frac{\phi(x+\zeta)+l(\zeta)}{\lambda}\right\}},\end{aligned}\]
    where \(\rho_x=\frac{\hat\rho_x}{\hat Z(x)}=\frac{\tilde\rho_x}{\tilde {Z}(x)}\). Denoting the Lipschitz constant of \(\phi\) as \(L_\phi>0\), for any \(x\in\RR\) we have
    \[\begin{aligned}&e^{-\frac{L_\phi+L_l}{\lambda}|x+\zeta|}\leq\tilde\rho_x(\zeta)\leq e^{\frac{L_\phi+L_l}{\lambda}|x+\zeta|},\quad\forall\zeta\in\RR;\\
    &\tilde{Z}(x)\geq\underline{Z}\coloneqq\E[\zeta\sim\cN(0,1)]{e^{-\frac{L_\phi+L_l}{\lambda}|\zeta|}}>0.\end{aligned}\]
    Take \(\mu_x\in\pcls{\RR}\) such that \(\dd{\mu_x}=\rho_x\dd{\Phi_x}\). By Markov's inequality, for any \(t\geq0\), we have
    \[\begin{aligned}\mu_x\left([-x-t,-x+t]^\complement\right)&\leq2\frac{e^{\left(\frac{L_\phi+L_l}{\sqrt{2}\lambda}\right)^2}}{\underline{Z}}\int_{t}^{\infty}\frac{e^{-\frac{1}{2}\left(x-\frac{L_\phi+L_l}{\lambda}\right)^2}}{\sqrt{2\pi}}\dd{\zeta}\\
    &\leq 2\frac{e^{-\frac{t^2}{2}+\frac{L_\phi+L_l}{\lambda}t}}{\underline{Z}}\leq2\frac{e^{\frac{1}{2-4\kappa}\left(\frac{L_\phi+L_l}{\lambda}\right)^2}}{\underline{Z}}e^{-\kappa t^2},\end{aligned}\]
    for any \(\kappa\in\left(0,\frac{1}{2}\right)\). Thus, given \(\kappa_2\in\left(0,\frac{1}{2}\right)\) and \(\kappa_1\geq\frac{e^{\frac{1}{2-4\kappa_2}\left(\frac{L_\phi+L_l}{\lambda}\right)^2}}{\underline{Z}}\), \(\mu_x\in\subG{}\) as in \eqref{eq:sub-gaussian}. Notice that the choices of \((\kappa_1,\kappa_2)\) primarily depend on the Lipschitz constants \(L_\phi\) and \(L_l\).

Moreover,
    by \eqref{eq:rand-nonloc-value},
    \[\nonlocal^\lambda\phi(x)=-\lambda\log{\E[\zeta\sim\cN(0,1)]{\exp\left\{-\frac{\phi(\zeta)+l(\zeta-x)}{\lambda}\right\}}},\quad x\in\RR.\]
    For any \(x,y\in\RR\), since \(l\in\lips[L_l]\), 
    \[l(\zeta-y)\leq l(\zeta-x)+L_l|y-x|\text{ and }l(\zeta-x)\leq l(\zeta-y)+L_l|x-y|,\quad\forall\zeta\in\RR.\]
    Therefore,
    \begin{align*}
        \nonlocal^\lambda\phi(y)&=-\lambda\log{\E[\zeta\sim\cN(0,1)]{\exp\left\{-\frac{\phi(\zeta)+l(\zeta-y)}{\lambda}\right\}}}\\
        &\leq -\lambda\log{\E[\zeta\sim\cN(0,1)]{\exp\left\{-\frac{\phi(\zeta)+l(\zeta-x)+L_l|y-x|}{\lambda}\right\}}}\\
        &=\nonlocal^\lambda\phi(x)+L_l|y-x|,
    \end{align*}
    and similarly, \(\nonlocal^\lambda\phi(x)\leq \nonlocal^\lambda\phi(y)+L_l|x-y|\). Thus, \(\nonlocal^\lambda\phi\in\lips[L_l]\).

Furthermore,
    Since \(\phi_1,\phi_2\in\lips\), there exists \(L_\phi>0\) such that \(\phi_1,\phi_2\in\lips[L_\phi]\). Following the arguments above, there exist \(\kappa_1,\kappa_2>0\) and \(\mu_{1,\cdot},\mu_{2,\cdot}\in\subG{\cdot}\) as in \eqref{eq:sub-gaussian} such that
    \[\nonlocal^\lambda\phi_i(x)=\E[\xi\sim\mu_{i,x}]{\phi_i(x+\xi)+l(\xi)}+\lambda\kl{\mu_{i,x}}{\Phi_x},\quad\forall x\in\RR,\,i\in\{1,2\}.\]
    Define \(\Delta_R\coloneqq \left(R+\frac{1}{2\kappa_2R}\right)\delta_R\) for any \(R>0\). Observe that \(\lim_{R\to\infty}\Delta_R=0\). For any \(x\in\RR\) and any \(\nu\in\subG{}\), 
    \begin{equation}\label{eq:sub-gaussian-2}
    \EE_{\xi\sim\nu}\left[|x+\xi|\mathbbm{1}\{|x+\xi|>R\}\right]\leq \Delta_R,\quad\forall R>0.
\end{equation}
Then, for any \(x\in\RR\) and \(R>0\),
\begin{align*}
    &\nonlocal^\lambda\phi_1(x)-\nonlocal^\lambda\phi_2(x)\leq\E[\xi\sim\mu_{2,x}]{\phi_1(x+\xi)-\phi_2(x+\xi)}\\
    =&\E[\xi\sim\mu_{2,x}]{\left(\mathbbm{1}\left\{|x+\xi|\leq R\right\}+\mathbbm{1}\left\{|x+\xi|>R\right\}\right)\left(\phi_1(x+\xi)-\phi_2(x+\xi)\right)}\\
    \leq& \mu_{2,x}\left([-R-x,R-x]\right)\|\phi_1-\phi_2\|_{L^\infty([-R,R])}\\
    &\hspace{15pt}+ \mu_{2,x}\left([-R-x,R-x]^\complement\right)|\phi_1(0)-\phi_2(0)|+2L_\phi\E[\xi\sim\mu_{2,x}]{|x+\xi|\mathbbm{1}\left\{|x+\xi|>R\right\}}\\
    \leq& \|\phi_1-\phi_2\|_{L^\infty([-R,R])}+2L_\phi\Delta_R;
\end{align*}
similarly, \(\nonlocal^\lambda\phi_2(x)-\nonlocal^\lambda\phi_1(x)\leq\|\phi_1-\phi_2\|_{L^\infty([-R,R])}+2L_\phi\Delta_R\),
and therefore,
\begin{equation}
    \label{eq:loc-diff-nonlocal}
    \left\|\nonlocal^\lambda\phi_1-\nonlocal^\lambda\phi_2\right\|_{L^\infty(\RR)}\leq\|\phi_1-\phi_2\|_{L^\infty([-R,R])}+2L_\phi\Delta_R.
\end{equation}
The conclusion follows with \(\mathscr{E}_R=2L_\phi\Delta_R\), \(\forall R>0\).
\end{proof}

\begin{proof}[Proof of \ref{lem:rand-nonloc-monotone}]
    Fix any $0<\lambda<\lambda'$. We know that \(\exists\mu^\lambda,\mu^{\lambda'}\in\pcls{\RR}\) such that 
    \[\nonlocal^\gamma\phi(x)=\E[\xi\sim\mu^\gamma]{\phi(x+\xi)+l(\xi)}+\gamma\kl{\mu^\gamma}{\Phi_x},\quad\forall x\in\RR,\]
    with \(\gamma\in\{\lambda,\lambda'\}\). Thus,
    \[\nonlocal^\lambda\phi\leq\E[\xi\sim\mu^{\lambda'}]{\phi(x+\xi)+l(\xi)}+\lambda\kl{\mu^{\lambda'}}{\Phi_x}\leq\nonlocal^{\lambda'}\phi(x),\quad\forall x\in\RR.\]
\end{proof}

\begin{proof}[Proof of \ref{lem:compare-classical}]
    For any \(y\in\RR\), \(\delta_y\in\pcls{\RR}\). Thus, 
    \[\nonlocal\phi(x)=\inf_{\xi}[\phi(x+\xi)+l(\xi)]=\inf_{\mu\in\pcls{\RR}}\int_{\RR}\phi(x+\xi)+l(\xi)\mu(\dd{\xi}).\]
    Since \(\{\mu\in\pcls{\RR}:\mu\ll\Phi_x\}\subset\pcls{\RR}\) and \(\kl{\mu}{\Phi_x}\geq0\) for all \(x\in\RR\), the conclusion follows.
\end{proof}

\begin{proof}[Proof of \ref{lemma:nonlocal_convergence}]
By \ref{lem:rand-nonloc-monotone} and \ref{lem:compare-classical}, it suffices to show that for any \(x\in \RR\) and any \(u>\nonlocal\phi(x)\), there exists \( \lambda_0>0\) such that \(\nonlocal^{\lambda_0}\phi(x)\leq u\). Denote \(\mathcal I=\{\xi\in\RR:\phi(x+\xi)+l(\xi)<u\}\). By continuity of \(\phi\) and \(l\), \(\mathcal{I}\neq\emptyset\) and \(\exists [a,b]\in\mathcal I\) with \(-\infty<a<b<+\infty\). Let \(\ell=\max_{\xi\in[a,b]}\phi(x+\xi)+l(\xi)\), then \(\ell<u\). Denote \(c=\frac{a+b}{2}\) and \(\delta=b-c\). By Lipschitz condition of \(\phi\) and \(l\), \(\exists\bar{L}>0\) such that
\(\phi(x+\xi)+l(\xi)<\ell+\bar{L}|\xi-c|,\,\forall\xi\not\in[a,b]\).

Let \(\sigma=\frac{u-\ell}{2\bar L}\sqrt{\frac{\pi}{2}}>0\) and \(\Phi_\sigma=\cN(c,\sigma)\). Then \(\EE_{\xi\sim\Phi_\sigma}[\phi(x+\xi)+l(\xi)]\leq \ell+L\sigma\sqrt{\frac{2}{\pi}}=\frac{u+\ell}{2}\). Then, for \(\lambda_0=\frac{u-\ell}{2}\left(-\log{\sigma}+\frac{\sigma^2+(c+x)^2}{2}\right)^{-1}>0\),
\[{\lambda_0 D_{\mathrm{KL}}(\Phi_\sigma\Vert\Phi_x)
=\lambda_0\,
   \left(-\log\sigma + \frac{ \sigma^2 + (c+x)^2-1}{2}\right)
\leq \frac{u-\ell}{2}.}\]
And hence,
\[\nonlocal^{\lambda_0}\phi(x)\leq \EE_{\xi\sim\Phi_\sigma}[\phi(x+\xi)+l(\xi)]+\lambda_0 D_{KL}(\Phi_\sigma\|\Phi_x)\leq u,\]
which implies \(\lim_{\lambda\to0^+}\nonlocal^\lambda\phi(x)=\nonlocal\phi(x)\) for any \(x\in K\).

From \cite[Lemma 2.2-(3)]{guo2009smooth}, we know that \(\nonlocal\phi\in\lips\). Then the conclusion follows from Dini's theorem.
\end{proof}

\begin{proof}[Proof of Lemma~\ref{lem:rand-stp-lips}]
    Fix any \(x,y\in\RR\). By Lemma~\ref{lem:l1-error-propagation}, for any intensity process \(\pi=\{\pi_t\}_{t\geq0}\),
    \begin{align*}
        &\left|J^{\lambda,\phi}(x;\pi)-J^{\lambda,\phi}(y;\pi)\right|\\
       \leq &\E{\int_0^\infty e^{-rt}p_t\left(\left|f(X^x_t)-f(X^y_t)\right|+\pi_t\left|\phi(X^x_t)-\phi(X^y_t)\right|\right)\dd{t}}\\
        \leq& \E{\int_0^\infty e^{-rt}p_t\left(L_f+\pi_tL_\phi\right)\left|X^x_t-X^y_t\right|\dd{t}}\\
        \leq& \E{\int_0^\infty e^{-(r-G)t}p_t\left(L_f+\pi_tL_\phi\right)\dd{t}}|x-y|\\
       \leq &\max\left\{\frac{L_f}{r-G},L_\phi\right\}|x-y|\E{\int_0^\infty e^{-(r-G)t}p_t\left(r-G+\pi_t\right)\dd{t}}\\
       =&\max\left\{\frac{L_f}{r-G},L_\phi\right\}|x-y|\E{\int_0^\infty \dd \left(e^{-(r-G)t}p_t\right)}\\
       = &\max\left\{\frac{L_f}{r-G},L_\phi\right\}|x-y|.
    \end{align*}
    For any \(\epsilon>0\), there exists an intensity process \(\pi^y=\{\pi^y_t\}_{t\geq0}\) such that 
    \[T^\lambda\phi(y)\leq J^{\lambda,\phi}(y;\pi^y)<T^\lambda\phi(y)+\epsilon.\]
    Denoting \(p^y_t=\exp\left\{-\int_0^t\pi^y_s\dd{s}\right\}\) for all \(t\geq0\), we have
    \begin{align*}
        T^\lambda\phi(x)-T^\lambda\phi(y)&< J^{\lambda,\phi}(x;\pi^y)-J^{\lambda,\phi}(y;\pi^y)+\epsilon\leq\max\left\{\frac{L_f}{r-G},L_\phi\right\}|x-y|+\epsilon;
    \end{align*}
    Similarly, \(T^\lambda\phi(y)-T^\lambda\phi(x)<\max\left\{\frac{L_f}{r-G},L_\phi\right\}|x-y|+\epsilon\). Thus,
    \[|T^\lambda\phi(x)-T^\lambda\phi(y)|<\max\left\{\frac{L_f}{r-G},L_\phi\right\}|x-y|+\epsilon,\quad\forall\epsilon>0.\]
    Sending $\epsilon\to 0$  closes the proof.
\end{proof}

Theorems~\ref{thm:reg-hjb} and \ref{thm:reg-verification} that we are about to introduce will immediately imply Theorem~\ref{thm:classical-control-sol}. 
\begin{theorem}
    \label{thm:reg-hjb}
    Suppose \(\phi\in\lips\) with lower bound \(l_\phi\geq0\). There exists a solution \(V^{\lambda,\phi}\) to \eqref{eq:hjb-regular-test} such that \(V^{\lambda,\phi}\in\CCal_{loc}^{2,\alpha}(\RR)\) for any \(\alpha\in(0,1)\), and \(\lim\limits_{|x|\to\infty}\left|\frac{V^{\lambda,\phi}(x)}{x}\right|<\infty\). 
\end{theorem}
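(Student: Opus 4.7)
The plan is to identify the natural candidate $V^{\lambda,\phi}\coloneqq T^\lambda\phi$ from \eqref{eq:rand-stp}, establish that it is a Lipschitz viscosity solution of \eqref{eq:hjb-regular-test} via the dynamic programming principle, and then bootstrap its regularity from Lipschitz to $\CCal^{2,\alpha}_{loc}$ through interior Schauder estimates for linear uniformly elliptic equations; a posteriori, $\CCal^{2,\alpha}_{loc}\subset\CCal^2$ promotes the viscosity solution to a classical solution. The linear-growth statement is immediate: by Lemma~\ref{lem:rand-stp-lips}, $V^{\lambda,\phi}\in\lips[L_V]$ with $L_V=\max\{L_f/(r-G),L_\phi\}$ (here $G=L+\tfrac{1}{2}L^2$ as in Assumption~\ref{assump:basic}\ref{discount}), so $|V^{\lambda,\phi}(x)|\leq|V^{\lambda,\phi}(0)|+L_V|x|$ and hence $\lim_{|x|\to\infty}|V^{\lambda,\phi}(x)/x|\leq L_V<\infty$.

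For the viscosity-solution property, I would combine the local DPP
\[V^{\lambda,\phi}(x)=\inf_{\pi}\E[x]{\int_0^h e^{-\int_0^t(r+\pi_s)\dd{s}}\left(f(X_t)-\lambda\RCal(\pi_t)+\pi_t\phi(X_t)\right)\dd{t}+e^{-\int_0^h(r+\pi_s)\dd{s}}V^{\lambda,\phi}(X_h)}\]
with It\^o's formula applied to $e^{-\int_0^t(r+\pi_s)\dd{s}}\varphi(X_t)$ for a smooth test function $\varphi$ touching $V^{\lambda,\phi}$ from above or below at an arbitrary $x_0$; dividing by $h$ and letting $h\downarrow 0$ produces the viscosity sub- and supersolution inequalities in a standard way. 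Since $\RCal$ is strongly concave, the infimum over $\pi\geq 0$ is attained at $\bar\pi=e^{-(\phi-v)/\lambda}$, and the two inequalities collapse to the semilinear viscosity equation \eqref{eq:hjb-regular-test}.

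For the regularity upgrade, I would view \eqref{eq:hjb-regular-test} as a \emph{linear} elliptic equation
\[\frac{\sigma(x)^2}{2}v''(x)+b(x)v'(x)-rv(x)=F(x),\qquad F(x)\coloneqq\lambda e^{-(\phi(x)-V^{\lambda,\phi}(x))/\lambda}-f(x),\]
with the right-hand side frozen. Since $V^{\lambda,\phi},\phi\in\lips$ and $f\in\lips[L_f]$, $F$ is locally Lipschitz, hence in $\CCal^{\alpha}_{loc}$ for every $\alpha\in(0,1)$; Assumption~\ref{ass:dynamic} provides Lipschitz $b,\sigma$ with $\sigma\geq\sigma_0>0$, making the operator uniformly elliptic on every precompact interval with Lipschitz coefficients. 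Interior Caffarelli-type $W^{2,p}$ estimates for viscosity solutions of linear uniformly elliptic equations then give $V^{\lambda,\phi}\in W^{2,p}_{loc}$ for every $p<\infty$; Sobolev embedding (in dimension one, valid for any $p>1$) yields $V^{\lambda,\phi}\in\CCal^{1,\alpha}_{loc}$, and the classical interior Schauder estimate for linear second-order uniformly elliptic equations finally upgrades this to $V^{\lambda,\phi}\in\CCal^{2,\alpha}_{loc}(\RR)$ for every $\alpha\in(0,1)$.

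The main obstacle is the self-referential character of this bootstrap: $F$ depends on the very function whose regularity we are trying to improve. The resolution is that the a priori Lipschitz bound from Lemma~\ref{lem:rand-stp-lips} is already enough to make $F$ a \emph{fixed} locally Lipschitz function of $x$, after which the two successive regularity steps are purely linear-elliptic and do not see the nonlinearity. One also needs the exponential factor $e^{(V^{\lambda,\phi}-\phi)/\lambda}$ to be locally bounded, which follows from local boundedness of the Lipschitz function $V^{\lambda,\phi}-\phi$. Should a technical issue arise in applying the Caffarelli estimates directly to viscosity solutions in this setting, a cleaner fallback is to construct a classical solution on each bounded interval $(-R,R)$ via a Schauder fixed-point argument applied to the map $u\mapsto v$, where $v$ solves the \emph{linear} Dirichlet problem with frozen source $\lambda e^{-(\phi-u)/\lambda}-f$ and boundary values $V^{\lambda,\phi}(\pm R)$, and then to identify the fixed point with $V^{\lambda,\phi}$ via the comparison principle for \eqref{eq:hjb-regular-test}.
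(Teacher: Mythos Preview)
Your proposal is correct but takes a genuinely different route from the paper. The paper works purely on the PDE side: it truncates the exponential nonlinearity with a smooth cut-off $\phi_M$, solves the resulting quasi-linear Dirichlet problem on each bounded interval $\Omega_N=(-N,N)$ via the Gilbarg--Trudinger existence theory (Theorem~15.10), uses the maximum principle (Theorem~10.3) to get a bound on $v_N$ uniform in $M$ so the cut-off can be dropped, and then applies interior $W^{2,q}$ estimates plus Sobolev embedding to extract a locally $\CCal^{1,\alpha}$-convergent subsequence as $N\to\infty$; the limit is the desired $\CCal^{2,\alpha}_{loc}$ solution. Only in the separate verification Theorem~\ref{thm:reg-verification} is this PDE-constructed solution identified with $T^\lambda\phi$.

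You start instead from the stochastic side with $V^{\lambda,\phi}=T^\lambda\phi$, so that Lemma~\ref{lem:rand-stp-lips} gives you the Lipschitz bound and linear growth for free, the DPP gives the viscosity property, and the key observation---that once $V^{\lambda,\phi}\in\lips$ the nonlinearity can be frozen as a fixed locally Lipschitz source $F$---reduces everything to interior Schauder theory for a \emph{linear} uniformly elliptic equation. This is more economical: it avoids the cut-off $\phi_M$, the quasi-linear existence machinery, and the compactness/diagonal-subsequence argument. The price is that you must establish the DPP for an infinite-horizon problem with unbounded intensity $\pi$ and linearly growing cost, which is standard but not entirely free; the paper's construction sidesteps any control-theoretic input in the existence step. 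Your fallback (build a classical solution on $(-R,R)$ and identify it with $V^{\lambda,\phi}$ by comparison) is in fact close in spirit to what the paper does, minus the cut-off.
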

\begin{proof}
    First, we fix any \(N,M>0\). Let \(\Omega_N=(-N,N)\) and \(\phi_M\) be a smooth cut-off function such that \(\phi(x)=e^{x}\) when \(x\leq M\) and \(\phi_M^{x}=e^{M+1}\) when \(x>M+1\). Hence, \(\phi_M\) is bounded and Lipschitz. 

    Then, we consider the following elliptic equation,
    \begin{equation}
        \label{eq:hjb-N}
        \begin{cases}
            (\cL-r)v+f-\lambda\phi_M\left(-\frac{\phi-v}{\lambda}\right)=0,&x\in\Omega_N,\\
            v(x)=\phi(x),&x\not\in\Omega_N,
        \end{cases}
    \end{equation}
    for \(v:\RR\to\RR\). Let \(B(x,y,z)=b(x)z-ry-\lambda\phi_M\left(-\frac{\phi(x)-y}{\lambda}\right)+f(x)\) for any \((x,y,z)\in\bar\Omega_N\times\RR\times\RR\). We can rewrite \eqref{eq:hjb-N} into\begin{equation}
        \label{eq:hjb-quasilinear}\frac{\sigma(x)^2}{2}v_N''(x)+B\left(x,v_N,v_N'\right)=0.
    \end{equation}
    Together with the standing Assumptions~\ref{ass:dynamic} and \ref{assump:basic}, we can apply existence theorems for quasi-linear elliptic equations (see \cite[Theorem 15.10.]{gilbarg2013elliptic} for instance) as well as the comparison principle (see \cite[Theorem 10.1]{gilbarg2013elliptic} for instance) to conclude that \eqref{eq:hjb-N} admits a unique classical solution \(v_N\in\CCal_{loc}^{2,\alpha}\cap\CCal\) for any \(\alpha\in(0,1)\). Notice that on \(\bar\Omega_N\times\RR\times\RR\),
    \[-{\rm sign}(y)\lambda\phi_M\left(-\frac{\phi(x)-y}{\lambda}\right)\leq\begin{cases}
        0,&\text{if }y\geq0;\\
        \lambda e^{-\frac{l_\phi}{r}},&\text{otherwise},
    \end{cases}\]
    and therefore,
    \[B(x,y,z){\rm sign}(y)=f(x){\rm sign}(y)-r|y|-{\rm sign}(y)\lambda\phi_M\left(-\frac{\phi(x)-y}{\lambda}\right)+b(x)|y|.\]
    Thus,
    \[\frac{B(x,y,z){\rm sign}(y)}{\frac{\sigma(x)^2}{2}z^2}\leq \frac{\sigma_0^2}{2}\frac{|z|\cdot\sup_{x\in\Omega_N}b(x)+[\lambda e^{-\frac{l_\phi}{\lambda}}+\sup_{x\in\Omega_N}f(x)]}{z^2}.\]
    Notice that the constants \(C_1=\frac{\sigma_0^2}{2}\cdot\sup_{x\in\Omega_N}b(x)\) and \(C_2=\frac{\sigma_0^2}{2}[\lambda e^{-\frac{l_\phi}{\lambda}}+\sup_{x\in\Omega_N}f(x)]\geq0\) depend only on \((L,\sigma_0,L_f,l_\phi,\lambda,N)\). By maximum principle \cite[Theorem 10.3]{gilbarg2013elliptic}, \(v_N\) is bounded uniformly for all \(M>0\). Therefore, given \(M\) sufficiently large, \(v_N\) satisfies \eqref{eq:hjb-regular-test} on \(\Omega_N\).

    Now, choosing any \(q>1\) and any \(\Gamma\Subset\Omega_N\) and applying interior estimate (for example, \cite[Theorem 9.11.]{gilbarg2013elliptic}), we have 
    \[\|v_N\|_{\cW^{2,q}(\Gamma)}\leq c\left[\|v_N\|_{L^q(\Gamma)}+\|\lambda e^{-\frac{\phi-v_N}{\lambda}}\|_{L^q(\Gamma)}\right],\]
    where \(\|v_N\|_{L^q(\Gamma)}=\left(\int_{\Gamma}|v_N(x)|^q\dd x\right)^{\frac{1}{q}}\), and \(\|v_n\|_{\cW^{2,q}(\Gamma)}=\left(\sum_{k=0}^2\|v_N^{(k)}\|_{L^{q}(\Gamma)}^q\right)^{\frac{1}{q}}\), with some constant \(c>0\). By the boundedness of \(v_N\) on \(\Omega_N\), we have that 
    \[\|v_N\|_{\cW^{2,q}(\Gamma)}\leq c'\|v_N\|_{L^\infty(\Gamma)},\]
    with some constant \(c'>0\). By Sobolev embedding theorem (see \cite[Section 5, Theorem 6]{Evans1998} for instance), we have that 
    \[\|v_N\|_{\fcls[1,\alpha]{\Gamma_N}}\leq c''\|v_N\|_{\cW^{2,q}(\Gamma_N)},\]
    for any \(\alpha\leq1-\frac{1}{q}\). Thus, \(\{v_N\}_{N\in\NN^+}\) is equicontinuous locally. Thus, upon taking subsequences, \(\{v_N\}_{N\in\NN^+}\) as a locally uniform limit \(V^{\lambda,\phi}\), which is also a weak limit in \(\cW^{2}_{loc}\). Thus, \(V^{\lambda,\phi}\in\CCal_{loc}^{2,\alpha}\) for any \(\alpha\in(0,1)\), since \(q>0\) can be arbitrarily large. Finally, it is straightforward to verify that \(V^{\lambda,\phi}\) satisfies \eqref{eq:hjb-regular-test} with at most linear growth. 
\end{proof}

\begin{theorem}
    \label{thm:reg-verification}
    Suppose \(v\in\CCal^2\) is a classical solution to \eqref{eq:hjb-regular-test} with at most linear growth. Then, \(v=T^{\lambda}\phi\).
\end{theorem}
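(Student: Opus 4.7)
The plan is to run a classical verification argument via It\^o's formula, working with the semilinear form of \eqref{eq:hjb-regular-test}. Rewrite the HJB in its optimizer form,
\[
(\cL-r)v(x) + f(x) + \inf_{\pi\geq 0}\bigl\{\pi(\phi(x)-v(x)) - \lambda\cR(\pi)\bigr\}=0,
\]
with pointwise minimizer \(\bar\pi(x)=e^{-(\phi(x)-v(x))/\lambda}\). Then for every admissible intensity \(\pi=\{\pi_t\}_{t\geq 0}\) and every \(x\in\RR\),
\[
(\cL-r)v(x) + f(x) + \pi(\phi(x)-v(x)) - \lambda\cR(\pi)\geq 0,
\]
with equality along the feedback choice \(\bar\pi\). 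This will give one inequality in each direction.

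For the upper bound, fix any admissible \(\pi\), set \(p_t=e^{-\int_0^t\pi_s\,ds}\), and localize by \(\tau_n=\inf\{t\geq 0:|X_t|\geq n\}\). Applying It\^o to \(e^{-\int_0^t(r+\pi_s)ds}v(X_t)\) and using the HJB inequality above gives
\[
v(x)\leq \E[x]{\int_0^{T\wedge\tau_n} e^{-\int_0^t(r+\pi_s)ds}\bigl(f(X_t)+\pi_t\phi(X_t)-\lambda\cR(\pi_t)\bigr)dt}+\E[x]{e^{-\int_0^{T\wedge\tau_n}(r+\pi_s)ds}v(X_{T\wedge\tau_n})},
\]
after checking that the It\^o stochastic integral is a true martingale up to \(T\wedge\tau_n\) (since \(v'\) and \(\sigma\) are locally bounded). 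Sending \(n\to\infty\) uses standard SDE moment bounds and the at-most-linear growth of \(v\) together with dominated convergence. Sending \(T\to\infty\) uses the transversality estimate \(\E[x]{e^{-rT}|v(X_T)|}\to 0\), which holds because \(e^{-rT}\) defeats the linear growth of \(v\) against the \(O(\sqrt{T})\) moment bound for \(X_T\). The conclusion is \(v(x)\leq J^{\lambda,\phi}(x;\pi)\), hence \(v\leq T^\lambda\phi\).

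For the lower bound, take \(\pi^*_t=\bar\pi(X_t)\). Admissibility follows because the HJB rewritten as \(\lambda e^{-(\phi-v)/\lambda}=f+(\cL-r)v\) shows that \(\bar\pi\) is continuous and nonnegative; moreover local boundedness of \(\bar\pi\) along paths is inherited from the \(\CCal^2\) regularity of \(v\) and continuity of \(\phi\). Re-running exactly the same It\^o computation with \(\pi=\pi^*\) turns the HJB inequality into an equality by construction of \(\bar\pi\), so the passage to the limit produces \(v(x)=J^{\lambda,\phi}(x;\pi^*)\geq T^\lambda\phi(x)\). Combining the two bounds, \(v=T^\lambda\phi\), which also confirms that \(\pi^*\) attains the infimum in \eqref{eq:rand-stp} and matches formula \eqref{eq:opt-ctrl-regular-test}.

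The main technical obstacle is the boundary/martingale control at infinity under only at-most-linear growth of \(v\). The discount \(e^{-rT}\) combined with the moment estimate for \(X_T\) from Assumption~\ref{ass:dynamic} neutralizes the transversality term, but because we only know \(v\in\CCal^2\) (not global bounds on \(v',v''\)) the \(\tau_n\)-localization is indispensable; this step is where the at-most-linear growth hypothesis on \(v\) is genuinely used, both to pass to the limit in the running cost through dominated convergence and to discard the terminal term. Everything else is bookkeeping around the semilinear HJB.
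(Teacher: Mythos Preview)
Your argument is essentially the same verification-by-It\^o approach as the paper's proof, and it is correct in structure; the paper in fact skips the localization and simply asserts the stochastic integral is a martingale, so your version is more careful. One small correction: under Lipschitz $b,\sigma$ the moment of $X_T$ grows exponentially, not like $O(\sqrt{T})$; the transversality $\E[x]{e^{-rT}p_Tv(X_T)}\to 0$ comes instead from \ref{discount} (namely $r>G=L+\tfrac12L^2$), which is exactly what the paper invokes.
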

\begin{proof}
    For any intensity process \(\pi=\{\pi_t\}_{t\geq0}\) and the corresponding \(p=\{p_t=e^{-\int_0^t\pi_s\dd{s}}\}_{t\geq0}\), applying It\^{o}'s formula to \(f(t,X_t)\) subject to \eqref{eq:sde-uncontrolled} with \(X_0=x\in\RR\) with \(f(t,y)=e^{-rt}p_tv(y)\) for all \(t\geq0\) and \(y\in\RR\), we have
    \[e^{-rt}p_tv(X_t)=v(x)+\int_0^te^{-rs}p_s\left(\cL-r-\pi_s\right)v(X_s)\dd{s}+\int_0^te^{-rs}p_sv'(X_s)\sigma(X_s)\dd{W_s}.\]
    Since \(v\) solves \eqref{eq:hjb-regular-test},
    and note that \(\{\int_0^te^{-rs}p_sv'(X_s)\sigma(X_s)\dd{W_s}\}_{t\geq0}\) is a martingale, we have by taking expectation
    \[v(x)\leq \E[x]{\int_0^te^{-rs}p_s\left[f(X_s)-\lambda\cR(\pi_s)+\pi_s\phi(X_s)\right]\dd{s}}+\E[x]{e^{-rt}p_tv(X_t)}.\]
    By \ref{discount}, \(\lim_{t\to\infty}\E[x]{e^{-rt}p_tv(X_t)}=0\). Therefore, for any \(\pi\),
    \[v(x)\leq J^{\lambda,\phi}(x;\pi),\ \ \forall x\in\RR\implies v\leq T^{\lambda}\phi.\]
    Let \(\bar\pi=\exp\left\{-\frac{\phi-v}{\lambda}\right\}\) and \(\pi^*_t=\bar\pi(X_t)\) for all \(t\geq0\). Then we have
    \[v(x)=J^{\lambda,\phi}(x;\pi^*),\ \ \forall x\in\RR\implies v=T^{\lambda}\phi.\]
\end{proof}

\begin{proof}[Proof of \ref{lem:rand-lips}]
    Notice that \(\nonlocal^{\lambda_2}\phi\in\lips[L_l]\). By \ref{discount}, \(\frac{L_f}{r-G}=\max\left\{\frac{L_f}{r-G}, L_l\right\}\). The conclusion follows from Lemma~\ref{lem:rand-stp-lips}.
\end{proof}

\begin{proof}[Proof of \ref{lem:global-rand-nonexp}]
    Without loss of generality, assume that \(\|\phi_1-\phi_2\|_{L^\infty(\RR)}<\infty\). Fix any \(x\in\RR\). By \ref{global-non-exp}, for any intensity process \(\pi\), 
    \begin{align*}
        \left|J^{\lambda_1,\nonlocal^{\lambda_2}\phi_1}(x;\pi)-J^{\lambda_1,\nonlocal^{\lambda_2}\phi_1}(x;\pi)\right|
        &\leq\E[x]{\int_0^\infty e^{-rt}p_t\pi_t\left|\nonlocal^{\lambda_2}\phi(X_t)-\nonlocal^{\lambda_2}\phi(X_t)\right|\dd{t}}\\
        &\leq\|\phi_1-\phi_2\|_{L^\infty(\RR)} \E{\int_0^\infty e^{-rt}p_t\pi_t\dd{t}}\\
        &\leq \|\phi_1-\phi_2\|_{L^\infty(\RR)}.
    \end{align*}
    Similar to the argument in the proof of Lemma~\ref{lem:rand-stp-lips}, we have
    \[|\stp^{\blambda}\phi_1(x)-\stp^{\blambda}\phi_2(x)|\leq \|\phi_1-\phi_2\|_{L^\infty(\RR)}.\]
    The conclusion follows.
\end{proof}

\subsection{Proofs in Section~\ref{sec:HJB-vrf}}\label{app:a-2}
\begin{proof}[Proof of Theorem~\ref{thm:fx-pnt-hjb-sol}]
    We know that \(V\in\lips\) with a finite lower bound. By Theorem~\ref{thm:classical-control-sol}, \(\nonlocal^{\lambda_2}V\in\lips\) with a finite lower bound. Recall that \(\stp^{\blambda}V=T^{\lambda_1}[\nonlocal^{\lambda_2}V]\), hence \(\stp^{\blambda}V\) is a unique classical solution in \(\CCal^2\cap\lips\) to 
    \begin{equation}
        \label{eq:temp-hjb}
        (\cL-r)v+f-\lambda_1\exp\left\{-\frac{\nonlocal^{\lambda_2}V-v}{\lambda_1}\right\}=0,
    \end{equation}
    for \(v:\RR\to\RR\). Since \(V=\stp^{\blambda}V\), we must have \(V\in\CCal^2\) and it satisfies in a classical sense that
    \[(\cL-r)V+f-\lambda_1\exp\left\{-\frac{\nonlocal^{\lambda_2}V-V}{\lambda_1}\right\}=0.\]
    The conclusion follows.
\end{proof}

\begin{proof}[Proof of Lemma~\ref{lem:equiv-hjb}]
    Following the classical dynamical programming principle argument in, for instance, \cite{oksendal2007applied}, we have the following the HJB equation associated with \eqref{eq:mdp-jump-diff}, where for any \(x\in\RR\),
    \begin{equation}
        \label{eq:mdp-hjb-1}
        \begin{aligned}
        0&=(\cL-r)v(x)+f(x)+\inf_{\pi\geq0;\,\rho\geq0,\EE_{\zeta\sim\Phi_{x},\rho(\zeta)}=1}\\
    &\hspace{40pt} \big\{\pi(x)\big[\EE_{\zeta\sim\Phi_x}\big[\rho(\zeta)\big(v(x+\zeta)+l(\zeta)+\lambda_2\log{\rho(\zeta)}\big)-v(x)\big]-\lambda_1\cR(\pi(x))\big\},
        \end{aligned}
    \end{equation}
    for \(v:\RR\to\RR\). By linearity in \(\pi\), \eqref{eq:mdp-hjb-1} is equivalent to
    \begin{equation}
        \label{eq:mdp-hjb-2}
        \begin{aligned}
        0&=(\cL-r)v(x)+f(x)+\inf_{\pi\geq0}\Bigg\{\pi(x)\cdot\\
        &\inf_{\rho\geq0,\E[\zeta\sim\Phi_{x}]{\rho(\zeta)}=1}\left[\E[\zeta\sim\Phi_x]{\rho(\zeta)\left(v(x+\zeta)+l(\zeta)+\lambda_2\log{\rho(\zeta)}\right)}-v(x)\right]-\lambda_1\cR(\pi(x))\Bigg\}\\
        &=(\cL-r)v(x)+f(x)+\inf_{\pi\geq0}\left\{\pi(x)\left[\nonlocal^{\lambda_2}v(x)-v(x)\right]-\lambda_1\cR(\pi(x))\right\},
        \end{aligned}
    \end{equation}
    where the last equality is by \eqref{eq:rand-nonloc-rho}. The equivalence with \eqref{eq:inf_st_HJB} follows from the concavity of \(\cR\) on \([0,+\infty)\).
\end{proof}

\begin{proof}[Proof of Theorem~\ref{thm:rand-verification}]
    For any \((\pi,\bmu)\in\cA\), applying It\^{o}'s formula (e.g., \cite[Theorem 1.14.]{oksendal2007applied}) to \(g(t,X_t)\) subject to \eqref{eq:jump-diff}, with \(g(t,y)=e^{-rt}V(y)\) for any \((t,y)\in[0,+\infty)\times\RR\), we have
    \[\begin{aligned}
        e^{-rt}V(X_t)&=V(x)+\int_0^t\hat{\cL}^{\pi,\nu}[e^{-rs}V(X_{s-})]\dd{s}+\int_0^te^{-rs}V'(X_{s-})\sigma(X_{s-})\dd{W_s}\\
        &=V(x)+\int_0^te^{-rt}\left\{(\cL-r)V(X_{s-})+\pi_s\E[\xi\sim\mu_t]{V(X_{s-}+\xi)-V(X_{s-})}\right\}\dd{s}\\
        &\hspace{60pt}+\int_0^te^{-rs}V'(X_s)\sigma(X_{s-})\dd{W_s},\quad\forall t\geq0.
    \end{aligned}\]
    Since \(V\) is a classical solution to \eqref{eq:inf_st_HJB}, by Lemma~\ref{lem:equiv-hjb}, we have that 
    \[\begin{aligned}
        e^{-rt}V(X_t)&\geq V(x)+\int_0^te^{-rs}V'(X_s)\sigma(X_{s-})\dd{W_s}\\
        &\hspace{10pt}-\int_0^t e^{-rs}\left\{f(X_{s-})-\lambda_1\cR(\pi_s)+\pi_s\left[\E[\xi\sim\mu_s]{l(\xi)}-\lambda_2\cH(\mu_s;X_{s-})\right])\right\}\dd{s}.
    \end{aligned}\]
    Since \(\{\int_0^te^{-rs}V'(X_s)\sigma(X_{s-})\dd{W_s}\}_{t\geq0}\) is a martingale, taking expectation on both sides of the inequality, we have
    \[\begin{aligned}
      V(x)&\leq \E[x]{\int_0^t e^{-rs}\left\{f(X_{s-}-\lambda_1\cR(\pi_s)+\pi_s\left[\E[\xi\sim\mu_s]{l(\xi)}-\lambda_2\cH(\mu_s;X_{s-})\right])\right\}\dd{s}}\\
      &\hspace{50pt}+\E[x]{e^{-rt}V(X_t)},\quad\forall t\geq0.  
    \end{aligned}\]
    By \ref{discount}, \(\lim_{t\to\infty}\E[x]{e^{-rt}V(X_t)}=0\). Therefore, for any \((\pi,\bmu)\in\cA\),
    \[V(x)\leq J^{\blambda}(x;\pi,\bmu),\ \ \forall x\in\RR\implies V\leq \tilde{\psi}^{\lambda}.\]
    Since \(V\in\lips\) with a lower bound, by Theorem~\ref{thm:classical-control-sol}, \(\nonlocal^{\lambda_2}V(x)\) is attained at \(\mu_x^*\) such that \(\rho^*_x=\frac{d\mu^*_x}{d\Phi_x}=\frac{\exp\left\{-\frac{V(x+\xi)+l(\xi)}{\lambda_2}\right\}}{\E[\zeta\sim\Phi_x]{\exp\left\{-\frac{V(x+\zeta)+l(\zeta)}{\lambda_2}\right\}}}\). Denote \(\bar\pi=\exp\left\{-\frac{\nonlocal^{\lambda_2}V-V}{\lambda_1}\right\}\). Let \(\pi^*_t=\bar\pi(X_{t-})\), and \(\mu^*_t=\mu^*(X_{t-})\) for all \(t\geq0\). By Lemma~\ref{lem:equiv-hjb} again, we have
    \[V(x)=J^{\blambda}(x;\pi^*,\bmu^*), \ \ \forall x\in\RR\implies V=\tilde{\psi}^{\blambda}.\]
\end{proof}

\subsection{Proofs in Section~\ref{sec:existence}}
\begin{proof}[Proof of Lemma~\ref{lemma:uniform_lipschitz}]
Fix any \(x,x'\in\RR\). By Lemma~\ref{lem:l1-error-propagation}, for \(\psi^{\blambda,0}\), we have 
\[\begin{aligned}\left|\psi^{\blambda,0}(x)-\psi^{\blambda,0}(x')\right|&\leq \EE\left[\int_0^\infty e^{-rt}|f(X_t)-f(X_t')|\dd t\right] \\
&\leq L_f\EE\left[\int_0^\infty e^{-rt}|X_t-X_t'|\dd t\right]\le\frac{L_f}{r-G}|x-x'|.\end{aligned}\]
Therefore, $\psi^{\blambda,0}\in\lips[\frac{L_f}{r-G}]$. Suppose by induction that \(\psi^{\blambda,n-1}\in\lips[\frac{L_f}{r-G}]\) for \(n\in\NN^+\). {By \Cref{lem:rand-nonloc-attain} and \ref{discount}, $L_l\leq \frac{L_f}{r-G}$, and hence \(\nonlocal^{\lambda_2}\psi^{\blambda,n-1}\in\lips[\frac{L_f}{r-G}]\).} Furthermore, one can conclude that \(\psi^{\blambda,n}=T^{\lambda_1}[\nonlocal^{\lambda_2}\psi^{\blambda,n-1}]\in\lips[\frac{L_f}{r-G}]\) based on Lemma~\ref{lem:rand-stp-lips}, which closes the proof.
\end{proof}

\begin{proof}[Proof of Theorem~\ref{thm:policy_improvement}]
By definition, for any \(x\in\RR\),
\begin{align*}
    \psi^{\blambda,1}&=\inf_{\pi}\EE^x\left[\int_0^\infty e^{-rt}p_t\left(f(X_t)-\lambda_1\mathcal R(\pi_t)+\pi_t\nonlocal^{\lambda_2}\psi^{\blambda,0}(X_t)\right)\dd t\right]\\
   &\leq J^{\lambda_1,\nonlocal^{\lambda_2}\psi^{\blambda,0}}(x;\pi^0)= \EE^x\left[\int_0^\infty e^{-rt}f(X_t)\dd t\right]=\psi^{\blambda,0}(x).
\end{align*}
Suppose by induction that $\psi^{\blambda,n}\leq \psi^{\blambda,n-1}$ holds on $\RR$ for \(n\in\NN^+\). By \eqref{eq:policy_improvement_psin_value_definition}, for any $x\in\RR$,
\[
\begin{aligned}
\psi^{\blambda,n+1}(x)&=\inf_\pi\EE\left[\int_0^\infty e^{-rt} p_t\left(f(X_t)+\nonlocal^{\lambda_2}\psi^{\blambda,n} (X_t) \pi_t-\lambda_1\cR(\pi_t) \right)\dd t \Big| X_0=x\right] \\
&\leq \EE\left[\int_0^\infty e^{-rt} p^n_t\left(f(X_t)+\nonlocal^{\lambda_2}\psi^{\blambda,n} (X_t) \pi^n_t-\lambda_1\cR(\pi^n_t) \right)\dd t \Big| X_0=x\right]\\
&\leq \EE\left[\int_0^\infty e^{-rt} p^n_t\left(f(X_t)+\nonlocal^{\lambda_2}\psi^{\blambda,n-1} (X_t) \pi^n_t-\lambda_1\cR(\pi^n_t) \right)\dd t \Big| X_0=x\right]\\
&=\psi^{\blambda,n}(x),
\end{aligned}\]
where the first inequality comes from the sub-optimality of $\pi^n$, and the second inequality is by \ref{increase}. The proof by induction is complete. 

With a natural lower bound \(0\), the existence of a pointwise limit \(\hat{\psi}^{\blambda}\geq0\) directly follows from monotone convergence theorem. To see the Lipschitz property, we fix any \(x,y\in\RR\). For any \(\epsilon>0\), there exists \(N\in\NN^+\) such that \(|\hat{\psi}^{\blambda}(x)-{\psi}^{\blambda,N}(x)|+|\hat{\psi}^{\blambda}(y)-{\psi}^{\blambda,N}(y)|<\epsilon\). By Lemma~\ref{lemma:uniform_lipschitz}, we have
\[\begin{aligned}
|\hat{\psi}^{\blambda}(x)-\hat{\psi}^{\blambda}(y)|&\leq|\hat{\psi}^{\blambda}(x)-{\psi}^{\blambda,N}(x)|+|{\psi}^{\blambda,N}(x)-{\psi}^{\blambda,N}(y)|+|\hat{\psi}^{\blambda}(y)-{\psi}^{\blambda,N}(y)|\\&<\frac{L_f}{r-G}|x-y|+\epsilon.    
\end{aligned}\]
Since \(\epsilon>0\) and \(x,y\in\RR\) are arbitrarily fixed, \(\hat{\psi}^{\blambda}\in\lips[\frac{L_f}{r-G}]\).

Moreover, fix any compact set $K\subset\RR$. By Dini's theorem, monotone pointwise convergence of continuous functions to a continuous limit is uniform.
Hence,
\[
\sup_{x\in K}|\psi^{\blambda,n}(x)-\hat{\psi}^{\blambda}(x)|\xrightarrow[]{n\to\infty}0.
\]
Because $K$ was arbitrary, the convergence is locally uniform on $\RR$.
\end{proof}

\begin{proof}[Proof of Theorem~\ref{thm:iter-value-fct}]
    For \(\hat{\psi}^{\blambda}\), \(0\) is a natural lower bound. From Theorem~\ref{thm:policy_improvement}, we know that \(\hat{\psi}^{\blambda}\in\lips[\frac{L_f}{r-G}]\). {Following a similar line of proof as \Cref{lem:rand-nonloc-attain}}, we know that for each \(x\in\RR\), \(\nonlocal^{\lambda_2}\hat{\psi}^{\blambda}\) is attained at and only at \(\mu^*_x\). 
    
    By Theorem~\ref{thm:classical-control-sol}, \(\nonlocal^{\lambda_2}\hat{\psi}^{\blambda}\in\lips[\frac{L_f}{r-G}]\), and the following optimal control problem,
    \[\mathcal T^{\blambda}\hat\psi^{\blambda}(x)=\inf_{\pi}\EE^x\left[\int_0^\infty e^{-rt}p_t\left(f(X_t)-\lambda_1\mathcal R(\pi_t)+\pi_t\nonlocal^{\lambda_2}\hat\psi^{\blambda}(X_t)\right)\dd t\right],\quad\forall x\in\RR,\]
    subject to \eqref{eq:sde-uncontrolled} and \eqref{eq:survival} with \(p_0=1\), is well-posed. In particular, \(\mathcal T^{\blambda}\hat\psi^{\blambda}\) is the unique \(\cC^2\) solution to  
    \begin{equation}
        \label{eq:hjb-fx-pnt}
        (\cL-r)u+f+\inf_{\pi\geq0}\left\{\pi\left(\nonlocal^{\lambda_2}\hat{\psi}^{\blambda}-u\right)-\lambda_1\cR(\pi)\right\}=0,
    \end{equation}
    and the problem admits the optimal control policy \(\pi^*=\{\pi^*_t\}_{t\geq0}\) such that for any \(t\geq0\), \(\pi^*_t=\hat\pi(X_t)\), where 
    \[\hat\pi(x)=\exp\left\{-\frac{\nonlocal^{\lambda_2}\hat\psi^{\blambda}(x)-\mathcal T^{\blambda}\hat\psi^{\blambda}(x)}{\lambda_1}\right\},\quad\forall x\in\RR;\]
    denote \(\hat p_t=\exp\left\{-\int_0^t\hat\pi_sds\right\}\) for all \(t\geq0\). By Theorem~\ref{thm:policy_improvement} and \eqref{eq:loc-diff-nonlocal} with \(L_\phi=\frac{L_f}{r-G}\), for any \(n\in\mathbb N\) and any \(R>0\), 
    \begin{align*}
        & \hat\psi^{\blambda}(x)-\mathcal T^{\blambda}\hat\psi^{\blambda}(x)\leq\psi^{\blambda,n+1}(x)-\mathcal T^{\blambda}\hat\psi^{\blambda}(x)\\
        \leq& \;\EE^x\left[\int_0^\infty e^{-rt}\hat p_t\left(f(X_t)-\lambda_1\mathcal R(\hat\pi_t)+\hat\pi_t\nonlocal^{\lambda_2}\psi^{\blambda,n}(X_t)\right)\dd t\right]-\mathcal T^{\blambda}\hat\psi^{\blambda}(x)\\
        =&\;\EE^x\left[\int_0^\infty e^{-rt}\hat p_t\hat\pi_t\left(\nonlocal^{\lambda_2}\psi^{\blambda,n}(X_t)-\nonlocal^{\lambda_2}\hat\psi^{\blambda}(X_t)\right)\dd t\right]\\
        \leq& \;\left(\|\psi^{\blambda,n}-\hat\psi^{\blambda}\|_{L^\infty(\Omega_R)}+\frac{2L_f}{r-G}\Delta_R\right)\EE^x\left[\int_0^\infty e^{-rt}\hat p_t\hat\pi_t\dd t\right]\\
        \leq&\; \|\psi^{\blambda,n}-\hat\psi^{\blambda}\|_{L^\infty(\Omega_R)}+\frac{2L_f}{r-G}\Delta_R.
    \end{align*}
    Fix an arbitrary \(\epsilon>0\). By \eqref{eq:sub-gaussian} and \eqref{eq:sub-gaussian-2}, there exists \(R_\epsilon>0\) such that \(\Delta_{R_\epsilon}<\frac{r-G}{4L_f}\epsilon\). By Theorem~\ref{thm:policy_improvement}, there exists \(N_\epsilon\in\mathbb N\) such that \( \|\psi^{\blambda,n}-\hat\psi^{\blambda}\|_{L^\infty(\Omega_{R_\epsilon})}<\frac{\epsilon}{2}\) for all \(n>N_\epsilon\). Thus,
    \[\forall \epsilon>0,\, \hat\psi^{\blambda}(x)-\mathcal T^{\blambda}\hat\psi^{\blambda}(x)<\epsilon\implies \hat\psi^{\blambda}(x)\leq\mathcal T^{\blambda}\hat\psi^{\blambda}(x).\]
    By Theorem~\ref{thm:policy_improvement} and \ref{lem:rand-increasing}, we have that for any \(x\in\RR\),
    \[\forall n\in\mathbb N,\,\mathcal T^{\blambda}\hat\psi^{\blambda}(x)\leq\mathcal T^{\blambda} \psi^{\blambda,n}(x)=\psi^{\blambda,n+1}(x)\implies\mathcal T^{\blambda}\hat\psi^{\blambda}(x)\leq\hat\psi^{\blambda}(x).\]
    Thus, we must have \(\hat\psi^{\blambda}=\mathcal T^{\blambda}\hat\psi^{\blambda}\) on \(\RR\). Replacing \(\stp^{\blambda}\hat{\psi}^{\blambda}\) by \(\hat{\psi}^{\blambda}\) in \(\hat\pi\), we get \(\bar\pi\).
\end{proof}

\subsection{Proofs in Section~\ref{sec:rand2classical}}
\begin{proof}[Proof of Lemma~\ref{lemma:uniform_lipschitz_classical_iterates}]
Fix any \(x,x'\in\RR\). By Lemma~\ref{lemma:uniform_lipschitz}, $\psi^{0}\equiv\psi^{\blambda,0}\in\lips[\frac{L_f}{r-G}]$.

Inductively, suppose that $\psi^{n-1}\in \lips[\frac{L_f}{r-G}]$. Define the cost functional for optimal stopping problem at step $n$ corresponding to \eqref{eq:policy_improvement_psin_value_definition_classical}: For any $x\in\RR$, $\tau$ as $\mathbb{F}$-stopping time,
\[J^n(x;\tau):=\E[x]{\int_0^\tau e^{-rt} f(X_t)\dd t+e^{-r\tau}\nonlocal\psi^{n-1} (X_\tau) }.\]
Then by \cite[Lemma 2.2-(3)]{guo2009smooth}, $\nonlocal\psi^{n-1}\in \lips[\frac{L_f}{r-G}]$. Subsequently, using Lemma~\ref{lem:l1-error-propagation}, the following holds for any $\tau$:
\[
\begin{aligned}
&|J^n(x;\tau)-J^n(x';\tau) |
\\
\leq& {\EE\left[\sup_{s\geq 0} e^{-Gs} |X_s-X_s'|\right]\cdot \left(\int_0^\tau e^{-(r-G)s}L_f\dd s + e^{-(r-G)\tau}\frac{L_f}{r-G}\right)
\leq \frac{L_f}{r-G}|x-x'|}.
\end{aligned}\]
By arbitrariness of $\tau$, $\psi^n\in \lips[\frac{L_f}{r-G}]$.
\end{proof}

\begin{proof}[Proof of Proposition~\ref{prop:bdd}]
Fix any $x\in\RR$.
Setting $\tau'= \infty$ in the definition of $\psi^n$ and $\tilde\psi^n$, we have
\begin{equation*}
\begin{aligned}
0&\leq \psi^n(x) = \inf_\tau\EE_x\left[\int_0^\tau e^{-rt}f(X_t)\dd t+ e^{-r\tau}\nonlocal\psi^{n-1}(X_\tau)\right]\\
&\leq \EE_x\left[\int_0^\infty e^{-rt}f(X_t)\dd t\right]\leq \frac{\|f\|_{L^\infty(\RR)}}{r}\leq \frac{M}{r}.
\end{aligned}
\end{equation*}
Similarly, replacing $\psi^n$ by $\tilde\psi^n$, and $\nonlocal\psi^{n-1}$ by $\nonlocal^{\lambda_2}\psi^{n-1}$ in the above inequality, we have $
0\leq \tilde\psi^n(x)\leq \frac{M}{r}$.

Setting $\pi' \equiv 0$, $p'\equiv 1$ in the definition of $\psi^{\blambda,n+1}$ and $\stp^{\blambda}\psi^n$, we have
\begin{equation*}
\begin{aligned}
0&\leq\psi^{\blambda,n+1}(x)=\stp^{\blambda}\psi^{\blambda,n}(x) \leq \EE_x\left[\int_0^\infty e^{-rt}f(X_t)\dd t\right]\leq \frac{\|f\|_{L^\infty(\RR)}}{r}\leq \frac{M}{r}.
\end{aligned}
\end{equation*}
\end{proof}

\begin{proof}[Proof of Lemma~\ref{lemma:semi-randomized_nonlocal_bound}]
For any $n\in \NN^+$, $x\in K$, let $\tau^*(x)$ be an optimal stopping time for $\psi^n(x)$ as defined in \eqref{eq:policy_improvement_psin_value_definition_classical}.
Then either $\tau^*(x)=0$, or $x\in\mathfrak{C}^n$ and
$\tau^*(x)=\inf\{t\ge0:X_t\notin\mathfrak{C}^n\}$, hence
\(X_{\tau^*(x)} \in K\).
Using $\tau^*(x)$ as an admissible (possibly suboptimal) time for $\tilde\psi^{\,n-1}(x)$,
\[\begin{aligned}
\tilde\psi^{n-1}(x)
&\le \EE_x\left[\int_0^{\tau^*(x)} e^{-rt} f(X_t)\dd t + e^{-r\tau^*(x)}\nonlocal^{\lambda_2}\psi^{n-1}(X_{\tau^*(x)})\right]\\
&= \psi^n(x) + \EE_x\left[e^{-r\tau^*(x)}\left(\nonlocal^{\lambda_2}\psi^{n-1}(X_{\tau^*(x)})-\nonlocal\psi^{n-1}(X_{\tau^*(x)})\right)\right]\\
&\leq \psi^n(x)+\left\|\nonlocal\psi^{n-1}-\nonlocal^{\lambda_2}\psi^{n-1}\right\|_{L^\infty(K)}.
\end{aligned}
\]
Meanwhile, for any $x\in\RR$,  we know $\psi^n\leq\tilde{\psi}^{n-1}$ by substituting the optimal stopping time associated with $\tilde\psi^{n-1}$ into the value function definition of $\psi^n$, and using $\nonlocal\psi^n\leq\nonlocal^{\lambda_2}\psi^n$. This closes the proof.
\end{proof}

\begin{proof}[Proof of Proposition~\ref{propn:inter_bound_nonlocal_stp}]
Fix $x\in\RR$, then
with bounded $b,\sigma$ by $M$, It\^o's formula and BDG's inequality give
\[
\EE_{x}|X_t-x|
 \le \|b\|_{L^\infty(\RR)}t+\|\sigma\|_{L^\infty(\RR)}\sqrt{t}
 \le M(t+\sqrt{t}).
\]
Subsequently, by $\tfrac{L_f}{r-G}$--Lipschitzness of $\nonlocal^{\lambda_2}\psi^n$,
\begin{equation}\label{eq:lipschitz_nonlocal_bound}
\bigl|\nonlocal^{\lambda_2}\psi^{n}(X_t)
       -\nonlocal^{\lambda_2}\psi^{n}(x)\bigr|\le
\frac{L_f}{r-G}|X_t-x|
\le \frac{L_fM}{r-G}(t+\sqrt{t}).
\end{equation}
{Recall that  $\stp^{\blambda}\psi^n$, by the definition of the randomized compound optimal stopping operator $\stp^{\blambda}$, solves the problem 
\[\mathcal T^{\blambda}\psi^{n}(x)=\inf_{\pi}\EE_x\left[\int_0^\infty e^{-rt}p_t\left(f(X_t)-\lambda_1\mathcal R(\pi_t)+\pi_t\nonlocal^{\lambda_2}\psi^{n}(X_t)\right)\dd t\right],\quad\forall x\in\RR,\]
    subject to \eqref{eq:sde-uncontrolled} and \eqref{eq:survival} with \(p_0=1\). Moreover, \(\mathcal T^{\blambda}\psi^{n}\) is the unique \(\cC^2\) solution to  the HJB equation
    \[
        (\cL-r)u+f-\lambda_1\exp\left(-\frac{\nonlocal^{\lambda_2}\psi^n-u}{\lambda_1}\right)=0.
    \]}
Taking the constant suboptimal control $\bar\pi = \frac{1}{\lambda_1^2}$ for the problem $\stp^{\blambda}\psi^n(x)$ yields
{\begin{equation}\label{eq:randomized_stp_nonlocal_bound}
\begin{aligned}
&\stp^{\blambda}\psi^{n}(x) \\
\leq&\; \EE_x\left[\int_0^\infty e^{-(r+\lambda_1^{-2})}\left(f(X_t)+\frac{1}{\lambda_1^2}\nonlocal^{\lambda_2}\psi^{n}(X_t)-\lambda_1\cR\left(\frac{1}{\lambda_1^{2}}\right)\right)\dd t\right]\\
\leq &\; \EE_x\bigg[\int_0^\infty e^{-(r+\lambda_1^{-2})}\bigg(\frac{1}{\lambda_1^2}\left(\nonlocal^{\lambda_2}\psi^{n}(x)+\nonlocal^{\lambda_2}\psi^{n}(X_t)-\nonlocal^{\lambda_2}\psi^{n}(x)\right)+M\bigg)\dd t\bigg]\\
&\qquad \qquad+\frac{\lambda_1\left(1-2\log(\lambda_1^{-1})\right)}{1+r\lambda_1^{2}}\\
\leq&\; \nonlocal^{\lambda_2}\psi^n(x)+M\lambda_1^2+\EE_x\left[\int_0^\infty e^{-(r+\lambda_1^{-2})t}\left(\frac{1}{\lambda_1^2}\left(\nonlocal^{\lambda_2}\psi^{n}(X_t)-\nonlocal^{\lambda_2}\psi^{n}(x)\right)\right)\dd t\right]\\
&\qquad \qquad+C\lambda_1\log\frac{1}{\lambda_1}\\
\leq & \; \nonlocal^{\lambda_2}\psi^n(x)+M\lambda_1^2+ \frac{CL_{f}M}{r-G}\lambda_1+C\lambda_1\log\frac{1}{\lambda_1},
\end{aligned}
\end{equation}}
where the second inequality comes from $\|f\|_{L^\infty(\RR)}\leq M$ by Assumption~\ref{assump:boundedness}, the third inequality is due to $\int_0^\infty e^{-(r+\lambda_1^{-2})t}\dd t=\frac{\lambda_1^2}{1+r\lambda_1^2}\leq C\lambda_1^2$, and the fourth inequality can be derived by:
{\[\begin{aligned}
&\EE_x\left[ \int_0^\infty e^{-(r+\lambda_1^{-2})t}\frac{1}{\lambda_1^2}\left(\nonlocal^{\lambda_2}\psi^{n}(X_t)- \nonlocal^{\lambda_2}\psi^{n}(x)\right)
\dd t\right]\\
\leq&
 \int_0^\infty e^{-(r+\lambda_1^{-2})t}\frac{1}{\lambda_1^2}\frac{L_fM}{r-G}(t+\sqrt{t})
\dd t\\
=&\frac{1}{\lambda_1^2}\frac{L_fM}{r-G}\left(\frac{1}{(r+\lambda_1^{-2})^2}+\frac{\sqrt{\pi}}{(r+\lambda_1^{-2})^{3/2}}\right)\\
\le& \frac{CL_fM}{r-G}
\lambda_1
\end{aligned}\]
for some constant $C>0$. }

Finally, it can be concluded from \eqref{eq:randomized_stp_nonlocal_bound} that for sufficiently small $\lambda_1>0$, we have   \(\stp^{\blambda}\psi^{n}(x)\le
  \nonlocal^{\lambda_2}\psi^{n}(x)
  +C\lambda_1\log\frac{1}{\lambda_1}.\)
\end{proof}

\begin{proof}[Proof of Proposition~\ref{propn:randomized_os_convergence}]
{Recall that $\stp^{\blambda}\psi^{n}$  solves \eqref{eq:value_improvement} with obstacle function $\nonlocal^{\lambda_2}\psi^n$:    
\[
        (\cL-r)\stp^{\blambda}\psi^{n}+f-\lambda_1\exp\left(-\frac{\nonlocal^{\lambda_2}\psi^n-\stp^{\blambda}\psi^{n}}{\lambda_1}\right)=0.
    \]
Meanwhile, $\tilde{\psi}^{n}$ solves  \eqref{eq:additional_HJB}:
\[\min\{\cL\tilde{\psi}^n-r\tilde{\psi}^n+f,\nonlocal^{\lambda_2}\psi^n-\tilde{\psi}^n\}=0.\]}
\noindent \textbf{Step 1:} We prove that $\stp^{\blambda}\psi^{n}\geq \tilde{\psi}^{n}-C\lambda_1$ on $\RR$, where $C$ solves $rC=\exp(-C)$.

Let $w=\tilde{\psi}^{n}-C\lambda_1$, then
\[\cL w-rw+f\geq -rC\lambda_1 = \lambda_1\exp(-C)\geq \lambda_1\exp\left(-\frac{\nonlocal^{\lambda_2}\psi^{n}-w}{\lambda_1}\right),\]
where the last inequality is due to 
\(\nonlocal^{\lambda_2}\psi^{n}-w = \nonlocal^{\lambda_2}\psi^{n}-\tilde{\psi}^{n}+C\lambda_1\geq C\lambda_1\) by the second part of \eqref{eq:additional_HJB}.
Applying the maximum principle yields $\stp^{\blambda}\psi^{n}\geq w$.

\noindent \textbf{Step 2:} We prove that $\stp^{\blambda}\psi^{n}\leq \tilde{\psi}^{n}+C\lambda_1\log\frac{1}{\lambda_1}$ for some constant $C>0$.

Suppose by contradiction that $\sup_{x} (\stp^{\blambda}\psi^{n}(x)- \tilde{\psi}^{n}(x)-C\lambda_1\log\frac{1}{\lambda_1})>0$. Then there exists $x'$ such that $\stp^{\blambda}\psi^{n}(x')- \tilde{\psi}^{n}(x')-C\lambda_1\log\frac{1}{\lambda_1}>0$. By choosing a sufficiently small $\epsilon>0$, we have $\stp^{\blambda}\psi^{n}(x')- \tilde{\psi}^{n}(x')-C\lambda_1\log\frac{1}{\lambda_1}-\frac{\epsilon}{2}|x'|^2>0$. It further holds that $\sup_x (\stp^{\blambda}\psi^{n}(x)- \tilde{\psi}^{n}(x)-C\lambda_1\log\frac{1}{\lambda_1}-\frac{\epsilon}{2}|x|^2)>0$. By coercivity, we may assume that the maximum is attained at $x^*$.

For simplicity, we set $v(x) =\stp^{\blambda}\psi^{n}(x)- \tilde{\psi}^{n}(x)-C\lambda_1\log\frac{1}{\lambda_1}-\frac{\epsilon}{2}|x|^2$. By first- and second-order optimality conditions, we have that
\[\partial_{xx} v(x^*) = \partial_{xx}(\stp^{\blambda}\psi^{n}- \tilde{\psi}^{n})(x^*)-\epsilon\leq 0; \;  \partial_{x} v(x^*) = \partial_{x}(\stp^{\blambda}\psi^{n}- \tilde{\psi}^{n})(x^*)-\epsilon x^*=0.\]
Hence,
\[\cL v(x^*) = \left(b\partial_x+\frac{\sigma^2}{2}\partial_{xx}\right)v(x^*) = \cL(\stp^{\blambda}\psi^{n}-\tilde{\psi}^{n})(x^*) - \frac{\sigma^2\epsilon}{2}-b\epsilon x^* \leq 0. \]
Sending $\epsilon\to 0$ gives $\cL(\stp^{\blambda}\psi^{n}-\tilde{\psi}^{n})(x^*)\leq 0$. 
Meanwhile, combining \(v(x^*)>0\) with Proposition~\ref{propn:inter_bound_nonlocal_stp}, 
\begin{equation}\label{eq:stp_tilde_psi_bound}\tilde{\psi}^{n}(x^*)+C\lambda_1\log\frac{1}{\lambda_1}+\frac{\epsilon}{2}|x^*|^2<\stp^{\blambda}\psi^{n}(x^*)\leq\nonlocal^{\lambda_2}\psi^{n}(x^*)+C_1\lambda_1\log\frac{1}{\lambda_1},\end{equation}
Then sending $\epsilon\to 0$  yields $\tilde{\psi}^{n}(x^*)<\nonlocal^{\lambda_2}\psi^{n}(x^*)$. Hence, $\cL \tilde{\psi}^{n}-r\tilde{\psi}^{n}+f=0$ at $x^*$.
By \eqref{eq:value_improvement} with $\nonlocal^{\lambda_2}\psi^{\blambda,n}$ replaced by $\nonlocal^{\lambda_2}\psi^{n}$, we have
\[\cL(\stp^{\blambda}\psi^{n}-\tilde{\psi}^{n})(x^*) = r(\stp^{\blambda}\psi^{n}-\tilde{\psi}^{n})(x^*) +\lambda_1\exp\left(\frac{\nonlocal^{\lambda_2}\psi^{n}(x^*)-\stp^{\blambda}\psi^{n}(x^*)}{\lambda_1}\right).\]
By \eqref{eq:stp_tilde_psi_bound}, the above term is larger than $0$, which yields the contradiction.
\end{proof}

\begin{proof}[Proof of Theorem~\ref{thm:comparison_convergence}]
\textbf{Lower bound:} 
We show that $\psi^{\blambda,n}\geq \psi^n-\frac{\lambda_1}{r}$ on $\RR$ by induction. 

When $n=0$, it holds that $\psi^{\blambda,0} \equiv \psi^0$.
Suppose $\psi^{\blambda,n}\geq \psi^n-\frac{\lambda_1}{r}$. Set $w^n = \psi^n-\frac{\lambda_1}{r}$. By the value iteration \eqref{eq:classical_PI_hjb} and \ref{lem:compare-classical}, we have
\[\nonlocal^{\lambda_2}w^n =\nonlocal^{\lambda_2} \left(\psi^n-\frac{\lambda_1}{r}\right) =\nonlocal^{\lambda_2}\psi^n- \frac{\lambda_1}{r} \geq \nonlocal\psi^n- \frac{\lambda_1}{r}\geq \psi^{n+1}- \frac{\lambda_1}{r}= w^{n+1},\]
which implies
\(\lambda_1\exp\left(-\frac{\nonlocal^{\lambda_2}w^n-w^{n+1}}{\lambda_1}\right) \leq \lambda_1.\)
Combining the above inequality with \eqref{eq:classical_PI_hjb}, we further have
\[\begin{aligned}
&\cL w^{n+1}-rw^{n+1}+f = \cL\psi^{n+1} -r\psi^{n+1}+f+\lambda_1\geq \lambda_1\\
\geq& \lambda_1\exp\left(-\frac{\nonlocal^{\lambda_2}w^n-w^{n+1}}{\lambda_1}\right) \geq \lambda_1\exp\left(-\frac{\nonlocal^{\lambda_2}\psi^{\blambda,n}-w^{n+1}}{\lambda_1}\right),
\end{aligned}
\]
where the last inequality is due to the induction assumption.
Furthermore, recall by \eqref{eq:value_improvement} that
\[   \cL\psi^{\blambda,n+1}-r\psi^{\blambda,n+1}+f -\lambda_1\exp\left(-\frac{\nonlocal^{\lambda_2}\psi^{\blambda,n}-\psi^{\blambda,n+1}}{\lambda_1}\right)=0.\]
By comparison principle, we have $\psi^{\blambda,n+1}\geq w^{n+1} = \psi^{n+1}-\frac{\lambda_1}{r}$, which closes the proof.

Returning to the lower bound, let $n\to \infty$ in the claim, we have $\psi^{\blambda}\geq \psi-\frac{\lambda_1}{r}$ on $\RR$.

\noindent \textbf{Upper bound:} Fix some  $x\in K$.  By Proposition~\ref{propn:classical_iterates_bound}, $\psi^n-\psi\leq C(1-\mu)^n$ on $\RR$. Moreover, the policy iteration gives $\psi^{\blambda}\leq \psi^{\blambda,n}$. Hence,
\begin{equation}\label{eq:int_psi_lambda_bound}
\begin{aligned}
\psi^{\blambda}(x)-\psi(x)&= (\psi^{\blambda}(x)-\psi^{\blambda,n}(x)) + (\psi^{\blambda,n}(x)-\psi^{n}(x)) +(\psi^n(x)-\psi(x)) \\
&\leq (\psi^{\blambda,n}(x)-\psi^{n}(x)) + C(1-\mu)^n.
\end{aligned}
\end{equation}
For every \(n\in\NN\) and every pair \(\blambda=(\lambda_1,\lambda_2)\), we have
\begin{equation}\label{eq:psi_lambdan-psi_n_bound}
\begin{aligned}
&\psi^{\blambda,n}(x)-\psi^{n}(x)\\
\leq&\; \|\stp^{\blambda}\psi^{\blambda,n-1}-\stp^{\blambda}\psi^{n-1}\|_{L^{\infty}(K)}+ \|\stp^{\blambda}\psi^{n-1}-\tilde\psi^{n-1}\|_{L^{\infty}(K)}+\|\tilde\psi^{n-1}-\psi^n\|_{L^{\infty}(K)}\\
\leq&\; \|\psi^{\blambda,n-1}-\psi^{n-1}\|_{L^{\infty}(K)}+ C\lambda_1\log\frac{1}{\lambda_1}+\|\nonlocal^{\lambda_2}\psi^{n-1}-\nonlocal\psi^{n-1}\|_{L^{\infty}(K)},
\end{aligned}
\end{equation}
where the first inequality comes from triangular inequality, and the second inequality is due to \ref{lem:global-rand-nonexp}, Proposition~\ref{propn:randomized_os_convergence}, and Lemma~\ref{lemma:semi-randomized_nonlocal_bound}.

Iteratively applying the arguments of \eqref{eq:psi_lambdan-psi_n_bound}, and note that $\psi^{\blambda,0} =\psi^0$, we have
\begin{equation}
\begin{aligned}
&\psi^{\blambda,n}(x)-\psi^n(x)\\
\leq&\; \|\psi^{\blambda,0}-\psi^{0}\|_{L^{\infty}(K)}+nC\lambda_1\log\frac{1}{\lambda_1}+\sum_{i=1}^n\|\nonlocal^{\lambda_2}\psi^{i-1}-\nonlocal\psi^{i-1}\|_{L^{\infty}(K)}\\
=&\; nC\lambda_1\log\frac{1}{\lambda_1}+\sum_{i=1}^n\|\nonlocal^{\lambda_2}\psi^{i-1}-\nonlocal\psi^{i-1}\|_{L^{\infty}(K)}.
\end{aligned}
\end{equation}

For each integer $j\ge1$ set
\(\Delta_j(\lambda_2):=
    \max_{1\le i\le j}
    \bigl\|\nonlocal^{\lambda_2}\psi^{i-1}-\nonlocal\psi^{i-1}\bigr\|_{L^\infty(K)}.\)
  
\ref{lemma:nonlocal_convergence} implies
\(\Delta_j(\lambda_2)\to0\) as \(\lambda_2\to0\) for fixed $j$.
Choose a decreasing sequence \(\eta_j\downarrow0\) such that $\lambda_2\le\eta_j$, we have $\Delta_j(\lambda_2)<2^{-j}$.
Set
$n(\lambda_1):=\Bigl\lceil \frac1{|\log(1-\mu)|}\log\frac1{\lambda_1}\Bigr\rceil$, let $\lambda_2\le\eta_{n(\lambda_1)}$, we then have
\[
  \sum_{i=1}^{n(\lambda_1)}
    \bigl\|\nonlocal^{\lambda_2}\psi^{i-1}-\nonlocal\psi^{i-1}\bigr\|_{L^\infty(K)}
\leq n(\lambda_1)2^{-n(\lambda_1)}.
\]

Subsequently, for sufficiently large $C'>0$, one can derive from  \eqref{eq:psi_lambdan-psi_n_bound} that
\[\psi^{\blambda,n(\lambda_1)}(x)-\psi^{n(\lambda_1)}(x)
\leq n(\lambda_1)C\lambda_1\log\frac{1}{\lambda_1}+ n(\lambda_1)2^{-n(\lambda_1)}.
\]

Substituting back to \eqref{eq:int_psi_lambda_bound}, we have
\begin{equation}\label{eq:three-terms}
  \psi^{\blambda}(x)-\psi(x)
\leq
    n(\lambda_1)C'\lambda_1\log\frac1{\lambda_1}
   +n(\lambda_1)2^{-n(\lambda_1)}
   +C(1-\mu)^{n(\lambda_1)} .
\end{equation}

Put $\alpha:=\lambda_1\log(1/\lambda_1)$ and let
$\widetilde\mu:=\max\{1-\mu,\tfrac12\}<1$.  Both exponential terms on the RHS of \eqref{eq:three-terms} satisfy
$2^{-n(\lambda_1)},(1-\mu)^{n(\lambda_1)}\le\widetilde\mu^{\,n(\lambda_1)}$.
Applying \cite[Lemma 4.9]{reisinger2020error}  with $\gamma=1$ yields a constant $C_0$ such that
\[
  \alpha n(\lambda_1)+\widetilde\mu^{n(\lambda_1)}\le
  C_0\alpha\bigl(-\log\alpha\bigr)
  \quad(\lambda_1\to0).
\]
Hence the right–hand side of \eqref{eq:three-terms} is bounded by
$C\alpha(-\log\alpha)$, i.e.
\[
  \psi^{\blambda}(x)-\psi(x)
  \le
  C\lambda_1\log\frac1{\lambda_1}
    \log\Bigl(\frac1{\lambda_1\log(1/\lambda_1)}\Bigr),
\]
whenever $\lambda_2\le\eta_{n(\lambda_1)}$, and the definition of $\eta_{n(\lambda_1)}$ guarantees that $\lambda_1\to 0$, which simultaneously induces a sequence $\lambda_2\to 0$. Together with the lower bound
$\psi^{\blambda}\ge\psi-\lambda_1/r$ completes the proof.
\end{proof}

\subsection{Proof in Section~\ref{sec:rl}}
\label{app:a-5}
\begin{proof}[Proof of \Cref{thm:geom-rate}]
Denote $w^n(x)=\psi^{\blambda,n+1}(x)-\psi^{\blambda,n}(x)$, {$g^n(x) = \nonlocal^{\lambda_2}\psi^{\blambda,n}(x)-\nonlocal^{\lambda_2}\psi^{\blambda,n-1}(x)$}. Fix $n\in\NN^+$, for any $x\in\RR$,
\[\begin{aligned}\
&\cL w^n(x)-rw^n(x)\\
=&\; \lambda_1\exp\left(-\frac{\nonlocal^{\lambda_2}\psi^{\blambda,n}(x)-\psi^{\blambda,n+1}(x)}{\lambda_1}\right)-\lambda_1\exp\left(-\frac{\nonlocal^{\lambda_2}\psi^{\blambda,n-1}(x)-\psi^{\blambda,n}(x)}{\lambda_1}\right)\\
=&\;\exp\left(-\frac{c(x)}{\lambda_1}\right)\left(\psi^{\blambda,n+1}(x)-\psi^{\blambda,n}(x)-(\nonlocal^{\lambda_2}\psi^{\blambda,n}(x)-\nonlocal^{\lambda_2}\psi^{\blambda,n-1}(x))\right),
\end{aligned}\]
where the second equality holds by mean value theorem, with $c(x)$ pointwisely lying between $\nonlocal^{\lambda_2}\psi^{\blambda,n}(x)-\psi^{\blambda,n+1}(x)$ and $\nonlocal^{\lambda_2}\psi^{\blambda,n-1}(x)-\psi^{\blambda,n}(x)$.

{By \ref{increase} and \ref{improve},} both $\nonlocal^{\lambda_2}\psi^{\blambda,n-1}(x)-\psi^{\blambda,n}(x)$ and $\nonlocal^{\lambda_2}\psi^{\blambda,n}(x)-\psi^{\blambda,n+1}(x)$ are pointwisely no less than $\nonlocal^{\lambda_2}\psi^{\blambda}(x)-\psi^{\blambda,0}(x)$. Therefore, $c(x)\geq \nonlocal^{\lambda_2}\psi^{\blambda}(x)-\psi^{\blambda,0}(x)$. We further find a uniform lower bound for $\nonlocal^{\lambda_2}\psi^{\blambda}(x)-\psi^{\blambda,0}(x)$ to argue that $C_0(x)=:\exp(-(\nonlocal^{\lambda_2}\psi^{\blambda}(x)-\psi^{\blambda,0}(x))/\lambda_1)$ is uniformly upper bounded,  which will be invoked later. 

To derive the lower bound of $\nonlocal^{\lambda_2}\psi^{\blambda}$, notice that it is equivalent to \[\psi^{\blambda}(0)+l(-x)-\lambda_2\log \EE_{\xi\sim\Phi_x}\!\left[\exp\!\left(-\frac{\psi^{\blambda}(x+\xi)+l(\xi)-\psi^{\blambda}(0)-l(-x)}{\lambda_2}\right)\right].\]
As $\psi^{\blambda}\in \lips[\frac{L_f}{r-G}]$, we have \(\exp\!\left(-\frac{\psi^{\blambda}(x+\xi)+l(\xi)-\psi^{\blambda}(0)-l(-x)}{\lambda_2}\right) \le \exp\left(\frac{\left(\frac{L_f}{r-G}+L_l\right)|x+\xi|}{\lambda_2}\right)\).
As $x+\xi \sim \Phi_0$, one can derive that its expectation is bounded. This gives the uniform lower boundedness of $\nonlocal^{\lambda_2}\psi^{\blambda}$. Combining with the upper bound on $\psi^{\blambda,0}$ by \cref{prop:bdd} leads to a uniform upper bound on $C_0(x)$, denoted as $C_0$. Namely,
\[\cL w^n(x) - rw^n(x)\leq C_0 \left(\psi^{\blambda,n+1}(x)-\psi^{\blambda,n}(x)-(\nonlocal^{\lambda_2}\psi^{\blambda,n}(x)-\nonlocal^{\lambda_2}\psi^{\blambda,n-1}(x))\right).\]

In the following, we prove that
\[
  \|w^n\|_{L^\infty(\RR)}
  \leq \frac{C_0}{C_0+r}\,
       \|\nonlocal^{\lambda_2}\psi^{\blambda,n}
          -\nonlocal^{\lambda_2}\psi^{\blambda,n-1}\|_{L^\infty(\RR)}
  =: K_n.
\]
Let us first show $\sup_{x\in\RR}w^n(x)\leq K_n$.
Since $w^n$ is bounded, for each $\epsilon>0$ the function
\(
  v^{n,\epsilon}(x) := w^n(x)-\frac{\epsilon}{2} x^2
\)
satisfies $v^{n,\epsilon}(x)\to -\infty$ as $|x|\to\infty$ and hence attains its
maximum at some point $\overline{x}_\epsilon\in\RR$:
\(v^{n,\epsilon}(\overline{x}_\epsilon)
  = \max_{x\in\RR} v^{n,\epsilon}(x).\)

Recall that
\[
  \cL w^n(x) - r w^n(x)
  = a^n(x)\bigl(w^n(x) - g^n(x)\bigr),
\]
where
\[
  a^n(x) := \exp\!\Big(-\tfrac{c(x)}{\lambda_1}\Big)\in(0,C_0],
  \qquad
  g^n(x) := \nonlocal^{\lambda_2}\psi^{\blambda,n}(x)
            - \nonlocal^{\lambda_2}\psi^{\blambda,n-1}(x),
\]
and set $B_n := \|g^n\|_{L^\infty(\RR)}$ so that
$K_n = \tfrac{C_0}{r+C_0} B_n$.
At the maximum point $\overline{x}_\epsilon$ we have
\[
  (v^{n,\epsilon})'(\overline{x}_\epsilon)=0,
  \quad
  (v^{n,\epsilon})''(\overline{x}_\epsilon)\le 0,
  \quad
    (w^n)'(\overline{x}_\epsilon) = \epsilon\,\overline{x}_\epsilon,
  \quad
  (w^n)''(\overline{x}_\epsilon) \le \epsilon.
\]
By Assumption~\ref{assump:boundedness}, 
\[
  \cL w^n(\overline{x}_\epsilon)
  = \frac12\sigma^2(\overline{x}_\epsilon)(w^n)''(\overline{x}_\epsilon)
    + b(\overline{x}_\epsilon)(w^n)'(\overline{x}_\epsilon)
  \le \frac12\bar\sigma^2\,\epsilon + \bar b\,\epsilon|\overline{x}_\epsilon|
  \le C_1\,\epsilon\,(1+|\overline{x}_\epsilon|),
\]
for some constant $C_1>0$ independent of $n$ and $\epsilon$. Evaluating the
identity for $w^n$ at $x=\overline{x}_\epsilon$ and combining with the above
estimate yields
\[
  r w^n(\overline{x}_\epsilon)
  + \exp\left(-\frac{c}{\lambda_1}\right)\bigl(w^n(\overline{x}_\epsilon)
                                   - g^n(\overline{x}_\epsilon)\bigr)
  = \cL w^n(\overline{x}_\epsilon)
  \le C_1\,\epsilon\,(1+|\overline{x}_\epsilon|).
\]
Hence,
\begin{equation}\label{eq:wn-upper-xeps}\begin{aligned}
  w^n(\overline{x}_\epsilon)
  &\le \frac{\exp\left(-\frac{c}{\lambda_1}\right)}{r+\exp\left(-\frac{c}{\lambda_1}\right)} \|g^n\|_{L^\infty(\RR)}
       + \frac{C_1}{r}\epsilon(1+|\overline{x}_\epsilon|)\\
  &\le \frac{C_0}{r+C_0}\|g^n\|_{L^\infty(\RR)} + C_2\epsilon(1+|\overline{x}_\epsilon|)\\
  &= K_n + C_2\epsilon(1+|\overline{x}_\epsilon|),
  \end{aligned}
\end{equation}
for some constant $C_2>0$ independent of $n,\epsilon$.

We now show that $\epsilon(1+|\overline{x}_\epsilon|)\to 0$ and that
$\sup_{\RR} w^n$ is bounded by $K_n$. Denote
\(m_\epsilon := v^{n,\epsilon}(\overline{x}_\epsilon)
  = \max_{x\in\RR} v^{n,\epsilon}(x)
  = w^n(\overline{x}_\epsilon) - \frac{\epsilon}{2}\,\overline{x}_\epsilon^2.\)
Since $\overline{x}_\epsilon$ maximizes $v^{n,\epsilon}$, we have
$v^{n,\epsilon}(\overline{x}_\epsilon)\ge v^{n,\epsilon}(0)$, that is,
\[
  w^n(\overline{x}_\epsilon) - \frac{\epsilon}{2}\,\overline{x}_\epsilon^2
  \ge w^n(0),
\]
and hence
\[
  \frac{\epsilon}{2}\,\overline{x}_\epsilon^2
  \le w^n(\overline{x}_\epsilon) - w^n(0)
  \le |w^n(\overline{x}_\epsilon)| + |w^n(0)|
  \le 2\|w^n\|_{L^\infty(\RR)}.
\]
Thus $\overline{x}_\epsilon = O(\epsilon^{-1/2})$, and
  \(\epsilon\,|\overline{x}_\epsilon|
  \to0 \) as \(\epsilon \to 0\).

Let $M^* := \sup_{x\in\RR} w^n(x)$. For each $\epsilon>0$,
\[
  m_\epsilon
  = \max_{x\in\RR} v^{n,\epsilon}(x)
  \le \sup_{x\in\RR} w^n(x) = M^*.
\]Conversely, for any $\eta>0$
there exists $x_\eta\in\RR$ such that $w^n(x_\eta)\ge M^*-\eta$, and hence
\[
  m_\epsilon
  \ge v^{n,\epsilon}(x_\eta)
  = w^n(x_\eta) - \frac{\epsilon}{2}x_\eta^2
  \ge M^* - \eta - \frac{\epsilon}{2}x_\eta^2.
\]
Letting $\epsilon\downarrow 0$ for fixed $\eta$ yields
$\liminf_{\epsilon\downarrow 0} m_\epsilon \ge M^* - \eta$, and since
$\eta>0$ is arbitrary we conclude that $m_\epsilon\to M^*$ as
$\epsilon\downarrow 0$. Moreover, $w^n(\overline{x}_\epsilon)\ge v^{n,\epsilon}(\overline{x}_\epsilon)
= m_\epsilon$, so
\[
  \limsup_{\epsilon\downarrow 0} w^n(\overline{x}_\epsilon)
  \;\ge\; \lim_{\epsilon\downarrow 0} m_\epsilon
  = M^*.
\]
On the other hand, from \eqref{eq:wn-upper-xeps} and
$\epsilon(1+|\overline{x}_\epsilon|)\to 0$ we obtain
\[
  \limsup_{\epsilon\downarrow 0} w^n(\overline{x}_\epsilon)
  \le K_n.
\]
Combining the two displays gives $M^*\le K_n$, i.e.
\[
  \sup_{x\in\RR} w^n(x) \le K_n.
\]

An entirely analogous argument applied to $-w^n$ in place of $w^n$
gives $-\inf_{x\in\RR} w^n(x)\le K_n$. Therefore
\[
  \|w^n\|_{L^\infty(\RR)}
  = \sup_{x\in\RR} |w^n(x)|
  \le K_n
  = \frac{C_0}{r+C_0}\,
    \|\nonlocal^{\lambda_2}\psi^{\blambda,n}
      -\nonlocal^{\lambda_2}\psi^{\blambda,n-1}\|_{L^\infty(\RR)},
\]
which is the desired contraction estimate.

Combining with non-expansiveness of $\nonlocal^{\lambda_2}$ yields \(\|\psi^{\blambda,n+1}-\psi^{\blambda,n}\|_{L^\infty(\RR)}\leq \frac{C_0}{r+C_0}\|\psi^{\blambda,n}-\psi^{\blambda,n-1}\|_{L^\infty(\RR)}\).
For simplicity, let $q\coloneqq\frac{C_0}{r+C_0}$. 
Fix some $n\in\NN^+$, for any integer $m>n$, applying the above inequality iteratively, we have
\[\|\psi^{\blambda,m}-\psi^{\blambda,n}\|_{L^\infty(\RR)}\leq \sum_{k=n}^{m-1}\|\psi^{k+1}-\psi^k\|_{L^\infty(\RR)}\leq \frac{q^n}{1-q}\|\psi^{\blambda,1}-\psi^{\blambda,0}\|_{L^\infty(\RR)}. \] 
By \Cref{thm:policy_improvement}, $\lim_{m\to \infty} \psi^{\blambda,m} = \psi^{\blambda}$, and $\psi^{\blambda}\leq \psi^{\blambda,1}\leq \psi^{\blambda,0}$. Sending $m\to \infty$, we conclude that for any $n\in \NN$,
\[\|\psi^{\blambda}-\psi^{\blambda,n}\|_{L^\infty(\RR)}\leq \frac{q^n}{1-q}\|\psi^{\blambda,1}-\psi^{\blambda,0}\|_{L^\infty(\RR)} \leq \frac{q^n}{1-q}\|\psi^{\blambda}-\psi^{\blambda,0}\|_{L^\infty(\RR)}.\]
\end{proof}
\section{Additional Experimental Results}\label{app:b}
In \Cref{subfig:psi-lambda-1,subfig:psi-lambda-0.5}, we display the impact of \(\sigma\) on the learned value function with respect to two different \(\blambda\) values, i.e., \(\blambda = (1.0,1.0), (0.5,0.5)\).
For each \(\blambda\), the curves retain the same shape and monotone ordering in \(\sigma\) (flatter near the origin and higher overall as \(\sigma\) increases), reflecting a broader effective continuation region and a reduced marginal value of aggressive impulses. This observation suggests that the impact of \(\sigma\) on the randomized value functions enjoys a general consistency across different \(\blambda\) values. 
Comparing the two \(\blambda\) instances, stronger entropy-regularization with \(\blambda=(1.0,1.0)\) yields a smoother, slightly higher cost profile near the origin, consistent with more conservative interventions; weaker regularization with \(\blambda=(0.5,0.5)\) permits more aggressive corrections and correspondingly lower minima.

In \Cref{subfig:pi-lambda-1,subfig:pi-lambda-0.5}, we compare the impact of \(\sigma\) on the intervention intensity under the two entropy settings. Similar to the observation for value functions, we also observe the general consistency in the impact of \(\sigma\) on the intensity across different values of \(\sigma\). 
For any fixed \(\sigma\), the regularization effects under different \(\blambda\) values are clear. When regularization is weaker, we can see that the flat region where the intensity \(\pi^*\) remains close to \(0\) visibly enlarges, representing a larger waiting region; when the state variable escapes away from the flat region, the increase in the intensity gets more rapid, representing for a more aggressive intervention. 
\begin{figure}[htb]
  \centering
  \begin{subfigure}[htb]{.48\textwidth}
    \centering
    \includegraphics[width=\linewidth]{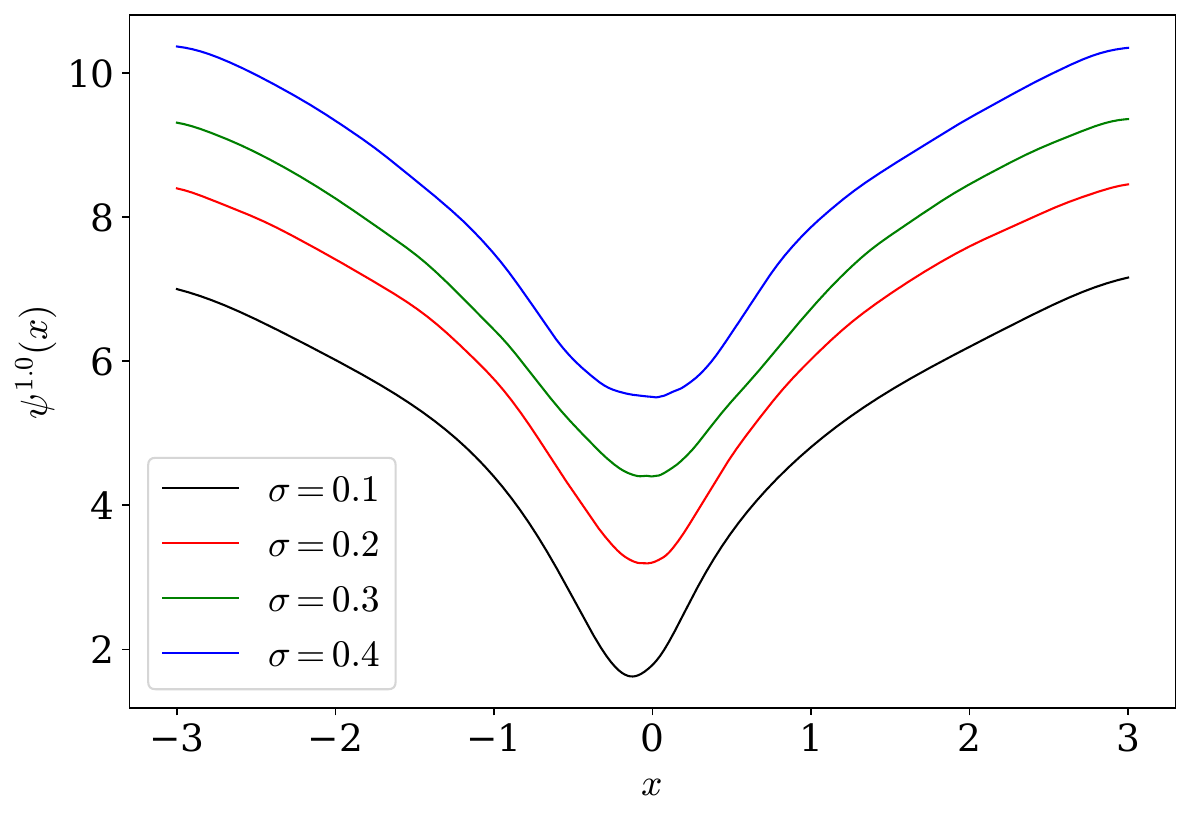}
    \caption{Value functions at \(\blambda = (1.0,1.0)\).}
    \label{subfig:psi-lambda-1}
  \end{subfigure}
  \hfill
  \begin{subfigure}[htb]{.48\textwidth}
    \centering
    \includegraphics[width=\linewidth]{figures/psi_sigma_comparison_lamb_0.5}
    \caption{Value functions at \(\blambda = (0.5,0.5)\).}
    \label{subfig:psi-lambda-0.5}
\end{subfigure}\\
  \begin{subfigure}[htb]{.48\textwidth}
    \centering
    \includegraphics[width=\linewidth]{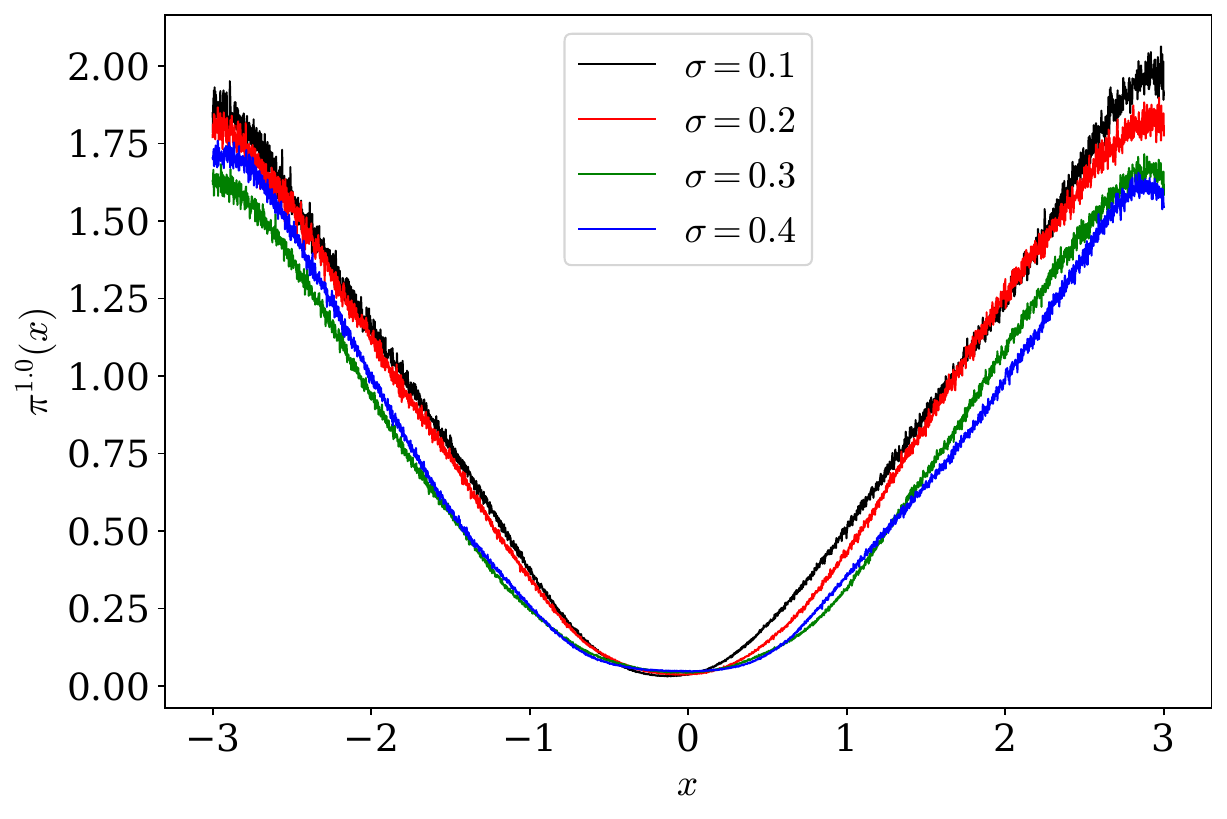}
    \caption{Intensities of interventions at \(\blambda = (1.0,1.0)\).}
    \label{subfig:pi-lambda-1}
  \end{subfigure}
  \hfill
  \begin{subfigure}[htb]{.48\textwidth}
    \centering
    \includegraphics[width=\linewidth]{figures/pi_sigma_comparison_lamb_0.5}
    \caption{Intensities of interventions at \(\blambda = (0.5,0.5)\).}
    \label{subfig:pi-lambda-0.5}
\end{subfigure}
\caption{Sensitivity analysis with respect to volatility $\sigma$ for Algorithm~\ref{alg:td_impulse} with \(\blambda=(1.0,1.0),(0.5,0.5)\).}
\label{fig:comparison}
\end{figure}




\bibliographystyle{plain}
\bibliography{ref}

@article{cao2025two,
  title={A Two-fold Randomization Framework for Impulse Control Problems},
  author={Cao, Haoyang and Dong, Yuchao and Yang, Zhouhao},
  journal={arXiv preprint arXiv:2509.12018},
  year={2025}
}

@article{HT1983,
author = {Harrison, J. Michael and Taksar, Michael I.},
doi = {10.1287/moor.8.3.439},
issn = {0364-765X},
journal = {Mathematics of Operations Research},
title = {{Instantaneous control of Brownian motion}},
year = {1983}
}

@article{Harrison1983,
author = {Harrison, J. Michael and Sellke, Thomas M. and Taylor, Allison J.},
doi = {10.1287/moor.8.3.454},
issn = {0364-765X},
journal = {Mathematics of Operations Research},
number = {3},
pages = {454--466},
title = {{Impulse control of Brownian motion}},
url = {https://www.jstor.org/stable/pdf/3689312.pdf},
volume = {8},
year = {1983}
}

@article{Sulem1986,
abstract = {We analyse the optimal ordering policy for impulse control of a one-product inventory system subject to a demand modelised by a diffusion process. The purpose is to minimize the expected discounted cost that includes a fixed set-up cost and linear costs of purchase, storage and shortage. The optimal cost is explicitly obtained as the smoothest solution of a Quasi-Variational Inequality derived from the optimal principle of Dynamic Programming. The optimal (s, S) policy is determined as the unique solution of a system of algebraic equations. [abstract FROM AUTHOR]},
author = {Sulem, Agn{\`{e}}s},
issn = {0364765X},
journal = {Mathematics of Operations Research},
keywords = {ALGEBRAIC functions,INVENTORIES},
number = {1},
pages = {125--133},
title = {{A solvable one-dimensional model of a diffusion inventory system}},
url = {http://search.ebscohost.com/login.aspx?direct=true&db=bth&AN=6979142&site=ehost-live https://www.jstor.org/stable/pdf/3690058.pdf},
volume = {11},
year = {1986}
}

@article{Eastham1988,
abstract = {JSTOR is a not-for-profit service that helps scholars, researchers, and students discover, use, and build upon a wide range of content in a trusted digital archive. We use information technology and tools to increase productivity and facilitate new forms of scholarship. For more information about JSTOR, please contact support@jstor.org. An investor has the opportunity of holding shares in n risky assets and one nonrisky asset at every time in a fixed interval [t, T]. The risky assets are governed by a stochastic differential equation. At random instants of his choice he may intervene in order to rebalance his portfolio and consume a nonnegative amount of money. Fixed and variable transactions costs are incurred upon intervention. At time T all remaining wealth is consumed. The solution to the problem of maximizing total utility of consumption is given by way of quasi-variational inequalities for the value function. With probability one the investor only intervenes finitely many times. Indication of the solution of the quasi-variational inequalities in the case of one risky asset with log-normal prices is given, together with a description of a discretization procedure. 0. Introduction. We consider a portfolio-consumption problem with the following features. There are n risky assets whose prices fluctuate in continuous time according to Ito processes with jumps, and there is a nonrisky asset whose value increases strictly with time. An investor begins with a portfolio of shares in these assets, and is allowed, at some random instants in time, to rebalance his portfolio, and at the same time t consume wealth. In order to intervene, a positive fixed cost plus nonnegative variabl costs must be paid. There is a finite time horizon T, at which all wealth is to be consumed. The problem is to choose a sequence of intervention times (0o) and a sequence of re-initialized portfolios (S) to maximize the investor's expected total utility of all consumptions. We therefore have a problem of impulse control over a finite time interval. From Merton's important 1971 paper [13] to a recent paper by Lehoczky, Sethi, and Shreve [10], many authors have examined various portfolio-consumption models. In the overwhelming majority of these, the methods of continuous-time dynamic pro gramming are employed to obtain a continuous consumption rate and portfoli allocation function that maximize total expected utility of consumption. Intervention occurs continuously; hence transaction costs cannot be treated. The requirement o continuous intervention without cost raises natural objections. The work that comes closest to ours in this sense is that of Magill and Constantinides [12]. In this paper th risky assets are of the log-normal type, and the riskless asset earns interest at a constant rate. The variable costs are proportional to the value of the transaction, and the fixed cost is not present. The control variables are the portfolio vector an consumption, and the authors are mostly interested in the decreasing absolute risk aversion utility. The method of solution is to employ dynamic programming on a system with perturbed diffusion coefficients. It is concluded that the optimal portfolio is piecewise constant and some computational formulas are obtained. We have modeled the problem as an impulse control problem, thus only wise constant portfolios are permissible. This allows us to draw from the Bensoussan and Lions [3] and others (e.g. [4], [9, [ [11] and [15]) to characte optimal value function as the solution to the version of the quasi-variational ties that is appropriate to this problem. The optimal times of intervention are t times of entrance to the "action set", determined by the value function, optimal portfolios are maximizing arguments of a function dependent on t and the optimal value. In fact, Bensoussan and Lions [2] suggest an impulse control model without explicitly listing constraints or giving solutions. Th differs from ours in some ways, chiefly the following. The idea that we are ma an expected total utility of consumption with an intervention cost raises th of divergence of the intervention times, which is usually easily dismissed i control problems in which one simply minimizes a total running cost plu costs. Basically the issue can be resolved by forbidding the explosion of tot In ?1 we formulate the problem rigorously. ?2 introduces the quasi-vari inequalities and the associated impulse control. Some attention is paid to r conditions of the control. In particular it is shown that under hypotheses of fin and quasi-left-continuity (see [5]) of the price process, the intervention tim accumulate before the investment period ends. The proof of the optimali Q.V.I. control makes up ?3; it mostly follows the well-known methods. Th general impulse portfolio problem is reduced to solving the quasi-variational ties, which can, of course, be difficult. In ?4 we solve the problem in the special linear coefficients, no jump structure, one risky asset, proportionate variab tions costs, and risk-neutral utility. Numerical techniques for more general are the object of future study.},
author = {Eastham, Jerome F and Hastings, Kevin J},
journal = {Mathematics of Operations Research},
keywords = {20 Finance,572 Probability,60K99 IA OR 1973 subject classification Main,93E20,A MS 1980 subject classification Primary,Cross References,Portfolio Key words Portfolio management,Secondary,Stochastic models,Stochastic processes,consumption utility,optimal impulse control,stochastic control},
number = {4},
pages = {588--605},
title = {{Optimal impulse control of portfolios}},
url = {https://www.jstor.org/stable/pdf/3689945.pdf?refreqid=excelsior\%3Ad2c4207c52944e35b5726a5246c386a9},
volume = {13},
year = {1988}
}

@article{Morton1995,
author = {Morton, A J and Pliska, S R},
journal = {Mathematical Finance},
number = {4},
pages = {337--356},
title = {{Optimal portfolio management with fixed transaction costs}},
url = {http://onlinelibrary.wiley.com/doi/10.1111/j.1467-9965.1995.tb00071.x/abstract https://onlinelibrary.wiley.com/doi/epdf/10.1111/j.1467-9965.1995.tb00071.x},
volume = {5},
year = {1995}
}

@article{Korn1998,
abstract = {One crucial assumption in modern portfolio theory of continuous-time models is the no transaction cost assumption. This assumption normally leads to trading strategies with infinite variation. However, following such a strategy in the presence of transaction costs will lead to immediate ruin. We present an impulse control approach where the investor can change his portfolio only finitely often in finite time intervals. Further, we consider transaction costs including a fixed and a proportional cost component. For the solution of the resulting control problems we present a formal optimal stopping approach and an approach using quasi-variational inequalities. As an application we derive a nontrivial asymptotically optimal solution for the problem of exponential utility maximisation.},
author = {Korn, Ralf},
journal = {Finance and Stochastics},
keywords = {93 E 20,G 11 Mathematics Subject Classification (1991),Portfolio optimisation,asymptotic analysis JEL classification,impulse control,transaction costs},
pages = {85--114},
publisher = {Springer-Verlag},
title = {{Portfolio optimisation with strictly positive transaction costs and impulse control}},
url = {https://link.springer.com/content/pdf/10.1007\%2Fs007800050034.pdf},
volume = {2},
year = {1998}
}

@article{Korn1999,
abstract = {We consider three applications of impulse control in {\textregistered}nancial mathematics, a cash management problem, optimal control of an exchange rate, and portfolio optimisation under transaction costs. We sketch the di¨erent ways of solving these problems with the help of quasi-variational inequalities. Further, some viscosity solution results are presented.},
author = {Korn, Ralf},
journal = {Mathematical Methods of Operations Research},
keywords = {Impulse control,cash man-agement,exchange rate,portfolio optimisation,viscosity solutions},
pages = {493--518},
title = {{Some applications of impulse control in mathematical finance}},
url = {https://link.springer.com/content/pdf/10.1007\%2Fs001860050083.pdf},
volume = {50},
year = {1999}
}

@article{Bielecki2000,
abstract = {This paper develops a continuous time risk-sensitive portfolio opti-mization model with a general transaction cost structure and where the individual securities or asset categories are explicitly affected by underlying economic fac-tors. The security prices and factors follow diffusion processes with the drift and diffusion coefficients for the securities being functions of the factor levels. We develop methods of risk sensitive impulsive control theory in order to maximize an infinite horizon objective that is natural and features the long run expected growth rate, the asymptotic variance, and a single risk aversion parameter. The optimal trading strategy has a simple characterization in terms of the security prices and the factor levels. Moreover, it can be computed by solving a risk sen-sitive quasi-variational inequality. The Kelly criterion case is also studied, and the various results are related to the recent work by Morton and Pliska.},
author = {Bielecki, Tomasz R and Pliska, Stanley R},
journal = {Finance and Stochastics},
keywords = {49N25,90A09,93E20,G11 Mathematics Subject Classification (1991),Risk-sensitive impulsive stochastic control,incomplete markets,optimal portfolio selection,quasi-variational in-equalities,transaction costs JEL Classification},
number = {1},
pages = {1--33},
title = {{Risk sensitive asset management with transaction costs}},
url = {https://link.springer.com/content/pdf/10.1007/s007800050001.pdf},
volume = {4},
year = {2000}
}

@article{Oksendal2002,
author = {{\O}ksendal, Bernt and Sulem, Agn{\`e}s},
journal = {SIAM Journal on Control and Optimizations},
number = {6},
pages = {1765--1790},
title = {{Optimal consumption and portfolio with both fixed and proportional transaction costs}},
url = {https://epubs.siam.org/doi/pdf/10.1137/S0363012900376013},
volume = {40},
year = {2002}
}

@article{Jeanblanc-Picque1995,
author = {Jeanblanc-Picqu\'e, Monique and Shiryaev, Albert N.},
journal = {Russian Mathematical Surveys},
number = {2},
pages = {257--277},
title = {{Optimization of the flow of dividends}},
url = {http://iopscience.iop.org/article/10.1070/RM1995v050n02ABEH002054/pdf},
volume = {50},
year = {1995}
}

@article{Candenillas2006,
abstract = {This paper deals with the dividend optimization problem for a financial or an insurance entity which can control its business activities, simultaneously reducing the risk and potential profits. It also controls the timing and the amount of dividends paid out to the shareholders. The objective of the corporation is to maximize the expected total discounted dividends paid out until the time of bankruptcy. Due to the presence of a fixed transaction cost, the resulting mathematical problem becomes a mixed classical-impulse stochastic control problem. The analytical part of the solution to this problem is reduced to quasivariational inequalities for a second-order nonlinear differential equation. We solve this problem explicitly and construct the value function together with the optimal policy. We also compute the expected time between dividend payments under the optimal policy.},
author = {Cadenillas, Abel and Choulli, Tahir and Taksar, Michael and Zhang, Lei},
doi = {10.1111/j.1467-9965.2006.00267.x},
issn = {09601627},
journal = {Mathematical Finance},
keywords = {Classical-impulse stochastic controls,Dividends,Quasi-variational inequalities,Risk},
number = {1},
pages = {181--202},
title = {{Classical and impulse stochastic control for the optimization of the dividend and risk policies of an insurance firm}},
url = {https://onlinelibrary.wiley.com/doi/pdf/10.1111/j.1467-9965.2006.00267.x},
volume = {16},
year = {2006}
}

@article{LyVath2007,
abstract = {We study a financial model with one risk-free and one risky asset subject to liquidity risk and price impact. In this market, an investor may trans-fer funds between the two assets at any discrete time. Each purchase or sale policy decision affects the price of the risky asset and incurs some fixed trans-action cost. The objective is to maximize the expected utility from terminal liquidation value over a finite horizon and subject to a solvency constraint. This is formulated as an impulse control problem under state constraints and we characterize the value function as the unique constrained viscosity solution to the associated quasi-variational Hamilton–Jacobi–Bellman inequality.},
author = {{Ly Vath}, Vathana and Mnif, Mohamed and Pham, Huy{\^{e}}n},
doi = {10.1007/s00780-006-0025-1},
journal = {Finance and Stochastics},
keywords = {Classification G11,Discontinuous viscosity solutions,Impulse control, Liquidity risk, Mathematics,Portfolio selection, State constraint, Subject},
pages = {51--90},
title = {{A model of optimal portfolio selection under liquidity risk and price impact}},
url = {https://link.springer.com/content/pdf/10.1007\%2Fs00780-006-0025-1.pdf},
volume = {11},
year = {2007}
}

@article{Mundaca1998,
abstract = {We tbrmulate a mathematical model tbr the optimal control of the exchange rate under uncertainty. The control consists of a combination of: I. (continuous) stochastic control and 2. an impulse control. We give general sufficient conditions for its solution. The results are applied to the following situation: Suppose that a government has two means of influencing the foreign exchange rate of its own currency: 1. At all times t the government can choose the domestic interest rate r,. 2. At selected times the government can intervene in the tbreign exchange market by buying or selling large amounts of tbreign currency. We assume that the exchange rate is stochastic and that there are given costs involved in these two actions. It is also costly to have an exchange rate which deviates too much from a given central parity m. How does the government apply its two means of influence in order to keep the exchange rate as stable as possible with minimal expected costs? We formulate this problem mathematically as a combined stochastic control (l) and impulse control (2) problem, and we discuss the solution in a specific example.},
author = {Mundaca, Gabriela and {\O}ksendal, Bernt},
journal = {Economics},
pages = {225--243},
title = {{Optimal stochastic intervention control with application to the exchange rate}},
url = {https://ac.els-cdn.com/S030440689700013X/1-s2.0-S030440689700013X-main.pdf?_tid=7918e807-7dfe-4e4f-af40-a70f81ebc385&acdnat=1523383806_a543466e5118bdf65522c6323c95149a https://reader.elsevier.com/reader/sd/pii/S030440689700013X?token=CDD18BA04AF64A7B868FB},
volume = {29},
year = {1998}
}

@article{Bertola2016,
abstract = {Our problem is motivated by an exchange rate control problem, where the control is composed of a direct impulsive intervention and an indirect, continu-ously acting intervention given by the control of the domestic interest rate. Similarly to Cadenillas and Zapatero (Math Financ 10:141–156, 2000) we formulate it as a mixed classical-impulse control problem. Analogously to Cadenillas and Zapatero (Math Financ 10:141–156, 2000), our approach builds on a quasi-variational inequal-ity, which we consider here in a weakened version, and we too start by conjecturing the optimal solution to have a specific structure. While in Cadenillas and Zapatero (Math Financ 10:141–156, 2000) the horizon is infinite thus leading to a time-homogeneous solution and the value function is supposed to be of class C 1 throughout, we have a finite horizon T and the value function is allowed not to be C 1 at the boundaries of the continuation region. By suitably restricting the class of impulse controls, we obtain a fully analytical solution.},
author = {Bertola, Giulio and Runggaldier, Wolfgang J and Yasuda, Kazuhiro},
doi = {10.1007/s00245-015-9320-6},
journal = {Applied Mathematics \& Optimization},
keywords = {Exchange rate control,Impulse control,Quasi-variational inequalities,Regularity of value function,Verification theorem},
number = {74},
title = {{On classical and restricted impulse stochastic control for the exchange rate}},
url = {https://link.springer.com/content/pdf/10.1007\%2Fs00245-015-9320-6.pdf},
volume = {74},
year = {2016}
}

@article{Triantis1990,
abstract = {JSTOR is a not-for-profit service that helps scholars, researchers, and students discover, use, and build upon a wide range of content in a trusted digital archive. We use information technology and tools to increase productivity and facilitate new forms of scholarship. For more information about JSTOR, please contact support@jstor.org. abstract This paper develops an approach for valuing flexible production systems using},
author = {Triantis, Alexander J. and Hodder, James E.},
journal = {The Journal of Finance},
number = {2},
pages = {549--565},
title = {{Valuing flexibility as a complex option}},
url = {http://www.jstor.org/stable/2328669 https://www.jstor.org/stable/pdf/2328669.pdf?refreqid=excelsior\%3Ada708413ce50133ac231f80fa3591027},
volume = {45},
year = {1990}
}

@article{Mauer1994,
abstract = {JSTOR is a not-for-profit service that helps scholars, researchers, and students discover, use, and build upon a wide range of content in a trusted digital archive. We use information technology and tools to increase productivity and facilitate new forms of scholarship. For more information about JSTOR, please contact support@jstor.org. abstract This article analyzes the interaction between a firm's dynamic investment,},
author = {Mauer, D C and Triantis, A J},
journal = {The Journal of Finance},
number = {4},
pages = {1253--1277},
title = {{Interactions of corporate financing and investment decisions: a dynamic framework}},
url = {http://www.jstor.org/stable/2329185 https://www.jstor.org/stable/pdf/2329185.pdf?refreqid=excelsior\%3A35939795caf645e6d29a32e61e2c758a},
volume = {49},
year = {1994}
}

@article{BensoussanRoignant,
  title={Sequential capacity expansion options},
  author={Bensoussan, Alain and Chevalier-Roignant, Beno{\^i}t},
  journal={Operations Research},
  volume={67},
  number={1},
  pages={33--57},
  year={2019},
  publisher={INFORMS}
}

@article{alvarez2004class,
  title={A class of solvable impulse control problems},
  author={Alvarez, Luis H. R.},
  journal={Applied Mathematics and Optimization},
  volume={49},
  number={3},
  pages={265--295},
  year={2004},
  publisher={Springer}
}

@article{basei2019optimal,
  title={Optimal price management in retail energy markets: an impulse control problem with asymptotic estimates},
  author={Basei, Matteo},
  journal={Mathematical Methods of Operations Research},
  volume={89},
  number={3},
  pages={355--383},
  year={2019},
  publisher={Springer}
}

@book{karatzas1998brownian,
abstract = {This book is designed as a text for graduate courses in stochastic processes. It is written for readers familiar with measure-theoretic probability and discrete-time processes who wish to explore stochastic processes in continuous time. The vehicle chosen for this exposition is Brownian motion, which is presented as the canonical example of both a martingale and a Markov process with continuous paths. In this context, the theory of stochastic integration and stochastic calculus is developed. The power of this calculus is illustrated by results concerning representations of martingales and change of measure on Wiener space, and these in turn permit a presentation of recent advances in financial economics (option pricing and consumption/investment optimization). This book contains a detailed discussion of weak and strong solutions of stochastic differential equations and a study of local time for semimartingales, with special emphasis on the theory of Brownian local time. The text is complemented by a large number of problems and exercises.},
author = {Karatzas, Ioannis and Shreve, Steven E.},
address = {New York},
booktitle = {Brownian Motion and Stochastic Calculus},
edition = {Second edition.},
isbn = {1-4612-0949-8},
keywords = {Mathematics ; Distribution (Probability theory)},
publisher = {Springer},
language = {eng},
series = {Graduate Texts in Mathematics ; 113},
title = {Brownian Motion and Stochastic Calculus},
year = {1998},
}

@article{wang2020reinforcement,
  title={Reinforcement learning in continuous time and space: A stochastic control approach},
  author={Wang, Haoran and Zariphopoulou, Thaleia and Zhou, Xun Yu},
  journal={Journal of Machine Learning Research},
  volume={21},
  number={198},
  pages={1--34},
  year={2020}
}

@article{reisinger2020error,
  title={Error estimates of penalty schemes for quasi-variational inequalities arising from impulse control problems},
  author={Reisinger, Christoph and Zhang, Yufei},
  journal={SIAM Journal on Control and Optimization},
  volume={58},
  number={1},
  pages={243--276},
  year={2020},
  publisher={SIAM}
}

@book{howard1960dynamic,
keywords = {Dynamic programming ; Markov processes},
language = {eng},
publisher = {Technology Press of Massachusetts Institute of Technology},
title = {Dynamic programming and Markov processes.},
year = {1960},
author = {Howard, Ronald A.},
address = {Cambridge},
}

@book{pham2009continuous,
  title={Continuous-time stochastic control and optimization with financial applications},
  author={Pham, Huy{\^e}n},
  volume={61},
  year={2009},
  publisher={Springer Science \& Business Media}
}

@article{reisinger2019penalty,
  title={A penalty scheme for monotone systems with interconnected obstacles: convergence and error estimates},
  author={Reisinger, Christoph and Zhang, Yufei},
  journal={SIAM Journal on Numerical Analysis},
  volume={57},
  number={4},
  pages={1625--1648},
  year={2019},
  publisher={SIAM}
}

@article{dianetti2024exploratory,
  title={Exploratory optimal stopping: A singular control formulation},
  author={Dianetti, Jodi and Ferrari, Giorgio and Xu, Renyuan},
  journal={arXiv preprint arXiv:2408.09335},
  year={2024}
}

@book{jeanblanc2009mathematical,
  title={Mathematical methods for financial markets},
  author={Jeanblanc, Monique and Yor, Marc and Chesney, Marc},
  year={2009},
  publisher={Springer Science \& Business Media}
}

@book{bensoussan2011applications,
  title={Applications of variational inequalities in stochastic control},
  author={Bensoussan, Alain and Lions, Jacquess-Louis},
  volume={12},
  year={2011},
  publisher={Elsevier}
}

@book{gilbarg2013elliptic,
  title={Elliptic Partial Differential Equations of Second Order},
  author={Gilbarg, David and Trudinger, Neil S},
  isbn={9783642963797},
  series={Grundlehren der mathematischen Wissenschaften},
  url={https://books.google.com/books?id=K3r1CAAAQBAJ},
  year={2013},
  publisher={Springer Berlin Heidelberg}
}

@article{tang2022exploratory,
  title={Exploratory {HJB} equations and their convergence},
  author={Tang, Wenpin and Zhang, Yuming Paul and Zhou, Xun Yu},
  journal={SIAM Journal on Control and Optimization},
  volume={60},
  number={6},
  pages={3191--3216},
  year={2022},
  publisher={SIAM}
}

@article{guo2009smooth,
  title={Smooth fit principle for impulse control of multidimensional diffusion processes},
  author={Guo, Xin and Wu, Guoliang},
  journal={SIAM Journal on Control and Optimization},
  volume={48},
  number={2},
  pages={594--617},
  year={2009},
  publisher={SIAM}
}

@article{dong2024randomized,
  title={Randomized optimal stopping problem in continuous time and reinforcement learning algorithm},
  author={Dong, Yuchao},
  journal={SIAM Journal on Control and Optimization},
  volume={62},
  number={3},
  pages={1590--1614},
  year={2024},
  publisher={SIAM}
}

@book{Evans1998,
    title = {{Partial Differential Equations}},
    year = {1998},
    booktitle = {Communications in Partial Differential Equations},
    author = {Evans, Lawrence C},
    pages = {xviii+662},
    volume = {19},
    publisher = {American Mathematical Society},
    url = {http://www.amazon.ca/exec/obidos/redirect?tag=citeulike09-20&amp;path=ASIN/0821807722},
    isbn = {0821807722},
    doi = {10.1080/03605309908820714},
    issn = {00255572},
    pmid = {25246403},
    arxivId = {arXiv:math/0106090v1},
    archivePrefix = {arXiv},
    eprint = {0106090v1},
    primaryClass = {math}
}

@book{oksendal2007applied,
  title={Applied stochastic control of jump diffusions},
  author={{\O}ksendal, Bernt and Sulem, Agn{\`e}s},
  volume={3},
  year={2007},
  publisher={Springer}
}

@book{bensoussan1984impulse,
  author = {Bensoussan, Alain and Lions, Jacquess-Louis},
address = {Paris},
isbn = {2040155775},
keywords = {Control theory ; Stochastic processes ; Inequalities (Mathematics) ; Calculus of variations},
language = {eng},
publisher = {Gauthier-Villars},
title = {Impulse control and quasi-variational inequalities},
year = {1984}
}

@article{davis2010impulse,
  title={Impulse control of multidimensional jump diffusions},
  author={Davis, Mark H. A. and Guo, Xin and Wu, Guoliang},
  journal={SIAM Journal on Control and Optimization},
  volume={48},
  number={8},
  pages={5276--5293},
  year={2010},
  publisher={SIAM}
}

@article{bayraktar2013impulse,
  title={On the impulse control of jump diffusions},
  author={Bayraktar, Erhan and Emmerling, Thomas and Menaldi, Jos{\'e}-Luis},
  journal={SIAM Journal on Control and Optimization},
  volume={51},
  number={3},
  pages={2612--2637},
  year={2013},
  publisher={SIAM}
}

@article{azimzadeh2016weakly,
  title={Weakly chained matrices, policy iteration, and impulse control},
  author={Azimzadeh, Parsiad and Forsyth, Peter A},
  journal={SIAM Journal on Numerical Analysis},
  volume={54},
  number={3},
  pages={1341--1364},
  year={2016},
  publisher={SIAM}
}

@article{azimzadeh2018convergence,
  title={Convergence of implicit schemes for {Hamilton--Jacobi--Bellman} quasi-Variational inequalities},
  author={Azimzadeh, Parsiad and Bayraktar, Erhan and Labahn, George},
  journal={SIAM Journal on Control and Optimization},
  volume={56},
  number={6},
  pages={3994--4016},
  year={2018},
  publisher={SIAM}
}

@article{boulbrachene1998noncoercive,
  title={The noncoercive quasi-variational inequalities related to impulse control problems},
  author={Boulbrachene, Messaoud},
  journal={Computers \& Mathematics with Applications},
  volume={35},
  number={12},
  pages={101--108},
  year={1998},
  publisher={Elsevier}
}

@article{boulbrachene2005pointwise,
  title={Pointwise error estimates for a class of elliptic quasi-variational inequalities with nonlinear source terms},
  author={Boulbrachene, Messaoud},
  journal={Applied mathematics and computation},
  volume={161},
  number={1},
  pages={129--138},
  year={2005},
  publisher={Elsevier}
}

@article{ieda2015implicit,
  title={An implicit method for the finite time horizon {Hamilton--Jacobi--Bellman} quasi-variational inequalities},
  author={Ieda, Masashi},
  journal={Applied Mathematics and Computation},
  volume={265},
  pages={163--175},
  year={2015},
  publisher={Elsevier}
}

@article{kharroubi2010backward,
author = {Idris Kharroubi and Jin Ma and Huy{\^e}n Pham and Jianfeng Zhang},
title = {{Backward SDEs with constrained jumps and quasi-variational inequalities}},
volume = {38},
journal = {The Annals of Probability},
number = {2},
publisher = {Institute of Mathematical Statistics},
pages = {794 -- 840},
keywords = {backward stochastic differential equation, impulse control problems, jump constraints, jump-diffusion process, Penalization, quasi-variational inequalities, viscosity solutions},
year = {2010},
doi = {10.1214/09-AOP496},
URL = {https://doi.org/10.1214/09-AOP496}
}

@article{jia2022policy,
  title={Policy evaluation and temporal-difference learning in continuous time and space: A martingale approach},
  author={Jia, Yanwei and Zhou, Xun Yu},
  journal={Journal of Machine Learning Research},
  volume={23},
  number={154},
  pages={1--55},
  year={2022}
}

@article{jia2023q,
  title={q-Learning in continuous time},
  author={Jia, Yanwei and Zhou, Xun Yu},
  journal={Journal of Machine Learning Research},
  volume={24},
  number={161},
  pages={1--61},
  year={2023}
}

@article{denkert2025control,
  title={Control randomisation approach for policy gradient and application to reinforcement learning in optimal switching},
  author={Denkert, Robert and Pham, Huy{\^e}n and Warin, Xavier},
  journal={Applied Mathematics \& Optimization},
  volume={91},
  number={1},
  pages={9},
  year={2025},
  publisher={Springer}
}

@article{constantinides1978existence,
  title={Existence of optimal simple policies for discounted-cost inventory and cash management in continuous time},
  author={Constantinides, George M. and Richard, Scott F.},
  journal={Operations research},
  volume={26},
  number={4},
  pages={620--636},
  year={1978},
  publisher={INFORMS}
}

@article{jeanblanc1993impulse,
  title={Impulse control method and exchange rate},
  author={Jeanblanc-Picqu{\'e}, Monique},
  journal={Mathematical Finance},
  volume={3},
  number={2},
  pages={161--177},
  year={1993},
  publisher={Wiley Online Library}
}

\end{document}